\documentclass[12pt, reqno]{amsart}

\usepackage{amssymb,amsmath,accents}
\usepackage{amsfonts,amsthm,mathrsfs}
\usepackage{cases}
\usepackage{graphicx} 
\usepackage{subcaption}
\usepackage{diagbox}
\usepackage{url}

\usepackage{txfonts}

\usepackage{xcolor}

\definecolor{dGREEN}{rgb}{0.0,0.6,0.4}

\numberwithin{equation}{section}
\newtheorem{thm}{Theorem}[section]
\newtheorem{cor}[thm]{Corollary}
\newtheorem{lemma}[thm]{Lemma}
\newtheorem{prop}[thm]{Proposition}

\newtheorem{remark}[thm]{Remark}

\DeclareMathOperator*{\osc}{osc}
\DeclareMathOperator*{\essinf}{ess.inf}
\DeclareMathOperator*{\esssup}{ess.sup}

\begin{document}
\title[Segmentation of monotone data by KWC]
{Segmentation of monotone data by Kobayashi-Warren-Carter type total variation energies}

\author[Y.~Giga]{Yoshikazu Giga}
\address[Y.~Giga]{Graduate School of Mathematical Sciences, The University of Tokyo, 3-8-1 Komaba, Meguro-ku, Tokyo 153-8914, Japan.}
\email{labgiga@ms.u-tokyo.ac.jp.}

\author[A.~Kubo]{Ayato Kubo}
\address[A.~Kubo]{Department of Mathematics, Faculty of Science, Hokkaido University, Kita 10, Nishi 8, Kita-Ku, Sapporo, Hokkaido, 060-0810, Japan.}
\email{kubo.ayato.j8@elms.hokudai.ac.jp}

\author[H.~Kuroda]{Hirotoshi Kuroda}
\address[H.~Kuroda]{Department of Mathematics, Faculty of Science, Hokkaido University, Kita 10, Nishi 8, Kita-Ku, Sapporo, Hokkaido, 060-0810, Japan.}
\email{kuro@math.sci.hokudai.ac.jp}

\author[K.~Sakakibara]{Koya Sakakibara}
\address[K.~Sakakibara]{Faculty of Mathematics and Physics, Institute of Science and Engineering, Kanazawa University, Kakuma-machi, Kanazawa-shi, Ishikawa 920-1192, Japan; 
RIKEN iTHEMS, 2-1 Hirosawa, Wako-shi, Saitama 351-0198, Japan.}
\email{ksakaki@se.kanazawa-u.ac.jp}


\maketitle
\thispagestyle{empty}

%
\begin{abstract}
We consider a Kobayashi-Warren-Carter (KWC) type total variation energy with a fidelity term.
 Since the energy is non-convex, the profiles of minimizers are quite different from those of the original Rudin-Osher-Fatemi energy.
 In one-dimensional setting, we prove that KWC type energy (and its generalization) with fidelity must have a piecewise constant minimizer if the data in fidelity is bounded not necessarily in $BV$.
 Moreover, we give quantitative estimates of the energy for a monotone data in fidelity.
 This estimate shows that any minimizer must be piecewise constant with an improved estimate of the number of jumps for a monotone data. 
 We also show the non-uniqueness of minimizers.
 Since this energy is useful from the point of segmentation or clustering, we compare with results of segmentation by the original Rudin-Osher-Fatemi energy and Mumford-Shah energy.
\end{abstract}

\maketitle


\section{Introduction} \label{SI} 

This paper is a continuation of our paper \cite{GKKOS} on minimizers of a Kobayashi-Warren-Carter (KWC) type total variation energy with a fidelity term.
 For a bounded domain $\Omega$ in $\mathbb{R}^n$ ($n\ge1$), we consider a (real-valued) integrable function $u$ with finite (essential) total variation in $\Omega$, i.e., its distributional derivative $Du$ is a finite Radon measure in $\Omega$.
 The space of all such functions is denoted by $BV(\Omega)$.
 Let $|Du|$ denote the total variation measure.
 For a given non-decreasing, lower semicontinuous function $K(\rho)$ for $\rho\ge0$, we consider a generalized total variation energy of the form
\[
	TV_K(u) = \int_{\Omega\backslash J_u} |Du|
	+ \int_{J_u} K \left( |u^+ - u^-| \right) d\mathcal{H}^{n-1},
\]
where $J_u$ denotes the (approximate) jump set of $u$ and $u^\pm$ is a trace of $u$ from each side of $J_u$; 
 $\mathcal{H}^{n-1}$ denotes the $n-1$ dimensional Hausdorff measure.
 For the precise meaning of $J_u$ and $u^\pm$, see \cite[Section 2]{AFP}.
 If $K(\rho)=\rho$, $TV_K$ agrees with standard total variation energy $TV$.
 For a given function $g\in L^2(\Omega)$ we consider a fidelity term 
\[
	\mathcal{F}(u)
	= \frac\lambda2 \int_\Omega |u - g|^2\, dx,
\]
where $\lambda>0$ is a constant.
 We consider
\[
	TV_{Kg}(u) = TV_K(u) + \mathcal{F}(u).
\]
In the case $TV_K=TV$, this energy (denoted by $TV_g$) is often called the Rudin-Osher-Fatemi functional which was proposed by \cite{ROF} to denoise the original image whose grey-level values equal $g$.
 As a singular limit of Kobayashi-Warren-Carter energy, we obtain
\[
	K(\rho) = \min_\xi \left( (\xi_+)^2 \rho + 2G(\xi) \right),
	\quad \rho \ge 0, \quad
	\xi_+ = \max(\xi, 0)
\]
with some non-negative function $G$ having strict minimum value $0$ at $\xi=1$;
 typically $G(\xi)=(\xi-1)^2/2$ \cite{GOU}, \cite{GOSU}.
 In this particular case
\begin{equation} \label{EK}
	K(\rho) = \rho / (1 + \rho).
\end{equation}
Note that $K(\rho)$ is no longer convex.

We are interested in profiles of a minimizer in the space $BV(\Omega)$.
 In the case of $K(\rho)=\rho$, $TV_g$ is strictly convex so regularity of a (unique) minimizer has been well studied.
 Since the paper \cite{GKKOS} includes several key references we do not repeat them.
 What is important is that $J_{u_*}\subset J_g$ for a minimizer $u_*$ of $TV_g$ for $g\in BV(\Omega)$; 
 see e.g.\ \cite{CL}.
 In particular, if $g$ is continuous, then $u_*$ is continuous in one-dimensional setting.

We consider a class of $K$ which includes \eqref{EK} as a particular example.
 Different from the case $K(\rho)=\rho$, a minimizer $U$ may not be continuous for continuous $g$.
 In fact, in \cite{GKKOS} it is shown in one-dimensional setting that $U$ is always piecewise constant if $g$ is continuous in the closure of $\Omega=(a,b)$.
 Moreover, the number of jumps of $U$ is estimated from above by a constant depending $\max g-\min g$ and $(b-a)\lambda$.
 Let us state this result of \cite{GKKOS} more precisely.
 As $K$ measuring a jump, we consider a function $K:[0,\infty)\to[0,\infty)$ satisfying
\begin{enumerate}
\item[(K1)] 
$K$ is lower semicontinuous, non-decreasing function with $K(0)=0$;
\item[(K2)] 
For any $M>0$, there exists a positive constant $C_M$ such that
\[
	K(\rho_1) + K(\rho_2) \ge K(\rho_1 + \rho_2) + C_M \rho_1 \rho_2
\]
for all $\rho_1,\rho_2\ge0$ with $\rho_1+\rho_2\le M$;
\item[(K3)] 
$\lim_{\rho\to0}K(\rho)/\rho=1$.
\end{enumerate}
The condition (K2) implies that $K$ is subadditive i.e.,
\[
	K(\rho_1) + K(\rho_2) \ge K(\rho_1 + \rho_2)
\]
but it is stronger than the subadditivity.
 Indeed, $K(\rho)=\rho$ is subadditive but it does not satisfy (K2).
 It is easy to see that our example \eqref{EK} satisfies (K2).
 The condition (K3) (with (K1)) implies that
\begin{enumerate}
\item[(K3w)] 
For any $M>0$, there exists $c_M>0$ such that
\[
	K(\rho) \ge c_M \rho
	\quad\text{for}\quad \rho \in [0, M].
\]
\end{enumerate} 
\begin{thm}[\cite{GKKOS}] \label{TEJ}
Assume that $K$ satisfies (K1), (K2) and (K3).
 Assume that $g\in C[a,b]$.
 Let $M>0$ be a number such that
\[
	M \ge \osc_{[a,b]} g
	:= \max_{[a,b]} g - \min_{[a,b]} g.
\]
Let $U\in BV(a,b)$ be a minimizer of $TV_{Kg}$.
 Then $U$ must be a piecewise constant function with finitely many jumps satisfying $\min g\le U\le\max g$ on $[a,b]$.
 Let $m$ be the number of jumps of $U$.
 Then
\[
	m \le \left[ (b-a) \lambda / A_M \right] + 1
\]
with $A_M:=\min\left\{ c_M / M, C_M \right\}$, where $C_M$ is in (K2) while $c_M$ is in (K3w).
 (The symbol $[r]$ denotes the integer part of $r\ge0$.)
 If $g$ is non-decreasing (resp.\ non-increasing), so is $U$.
 Moreover,
\[
	m \le \left[ (b-a) \lambda / C_M \right] + 1.
\]
\end{thm}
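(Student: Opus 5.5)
We would prove Theorem~\ref{TEJ} by comparison: modify a minimizer $U$ locally and use minimality to exclude unwanted behaviour. First, truncation gives $\min g\le U\le\max g$. Replacing $U$ by $\max(\min(U,\max g),\min g)$ does not increase $TV_K$, since $K$ is non-decreasing and truncation shrinks jumps and the diffuse variation, and it strictly decreases $\mathcal F$ unless $U$ already lies in this range. All jumps of $U$ therefore have size at most $M$, so (K2) and (K3w) apply with this $M$.

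Next we show that $U$ is piecewise constant. On an interval $I=(x_0,x_1)$ we compare $U$ with the modification that is constant, equal to the mean of $U$ (or of $g$), on $I$, joined to $U$ by jumps at $x_0$ and $x_1$.
\begin{itemize}
\item Suppose $|DU|(I)=\delta$ is small but positive. Condition (K2) says that merging variation into a single jump saves at least $C_M\rho_1\rho_2$. More precisely, a diffuse part $|D^cU|+|D^aU|$ of mass $\delta$ can be concentrated into one jump. Writing $\delta$ as a sum of tiny pieces and iterating (K2) together with (K3), the cost of one jump of size $\delta$ is at most $\delta - C_M(\text{cross terms})$, a saving of order $\delta^2$ when the variation is genuinely spread out.
\item The fidelity change is at most $\lambda|I|\,\delta^2$ or so, since $|U-\bar U|\le\delta$ on $I$.
\end{itemize}
By the first bullet, positioning the jump appropriately on a short $I$ yields a strict decrease. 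Hence $D U$ has no diffuse part and $U$ is a jump function. The same estimate bounds the number of jumps; this is the key counting step.

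For the count itself, take two consecutive jumps at $p<q$ with sizes $\rho_1,\rho_2\le M$. If $U$ jumps in the same direction at both, we move the middle plateau to coincide with one neighbour, merging the two jumps. By (K2) this saves $C_M\rho_1\rho_2$ in $TV_K$, at a fidelity cost of at most $\lambda (q-p)\min(\rho_1,\rho_2)\max(\rho_1,\rho_2)$ after the better choice; a careful choice gives a cost of at most $\lambda(q-p)\rho_1\rho_2$. Minimality then forces $q-p\ge C_M/\lambda$.

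If the directions are opposite, the middle plateau is a bump, and we flatten it to the lower of the neighbouring values. The saving is at least $K(\rho_1)+K(\rho_2)-K(|\rho_1-\rho_2|)$, which by (K3w) and (K2) is at least $c_M\min\rho_i$. The fidelity cost is at most $\lambda(q-p)M\min\rho_i$, so $q-p\ge c_M/(\lambda M)$.

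Summing the gaps, the $m$ jumps determine $m-1$ disjoint gaps of length at least $A_M/\lambda$ inside $(a,b)$. This gives $m\le[(b-a)\lambda/A_M]+1$.

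For monotone $g$ we need to show that $U$ is monotone. Take the monotone rearrangement $U^\sharp$ of $U$, or more simply reflect any decreasing jump so that the plateau values are sorted. For non-decreasing $g$, sorting the values does not increase $\int|U-g|^2$, by the rearrangement inequality. Sorting the plateaus also does not increase the jump cost: it replaces a path with total variation by a monotone path, and subadditivity controls the result. A bump would in any case be strictly non-optimal by the opposite-direction argument above.

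Once $U$ is monotone, only same-direction pairs occur, so every gap is at least $C_M/\lambda$. This gives $m\le[(b-a)\lambda/C_M]+1$.

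The main obstacle is the piecewise-constant step, namely excluding a Cantor or absolutely continuous part of $DU$ when $g$ is merely continuous. We would first try the argument for the absolutely continuous part near a Lebesgue point of $U'$, and then adapt it to $D^cU$ via its support.
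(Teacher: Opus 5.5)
The truncation step is sound, and so is the counting step once $U$ is known to be piecewise constant with finitely many jumps (modulo the slips below). The genuine gap is the step everything rests on: that $U$ is piecewise constant with finitely many jumps. Your sketch for it fails at a specific point. The claim that the fidelity change is at most $\lambda|I|\delta^2$ ``since $|U-\bar U|\le\delta$'' does not follow. For a constant $c$ on $I$,
\[
\int_I|c-g|^2\,dx-\int_I|U-g|^2\,dx=\int_I(c-U)^2\,dx+2\int_I(c-U)(U-g)\,dx .
\]
The cross term is only bounded by $2\delta\int_I|U-g|\,dx$, which is linear in $\delta$ unless you already know $|U-g|\lesssim\delta$ on $I$. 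So it cannot be absorbed by the quadratic saving $\delta-K(\delta)\ge C_M\delta^2/2$ that (K2) and (K3) provide. Minimality does not help either: at first order it only tells you the cross term is nonnegative.

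This is exactly why the proof in \cite{GKKOS}, whose structure is visible in the proof of Theorem~\ref{TMainMon}, localizes at the contact set $C=\{U=g\}$. Piecewise constancy is reduced to discreteness of the relevant part $C_\Gamma$ of $C$. The quantitative comparison is made only between two nearby contact points $\alpha<\beta$, where the pinned values $U(\alpha)=g(\alpha)$, $U(\beta)=g(\beta)$ make the fidelity cost quadratic; compare Lemmas~\ref{LFid}, \ref{LND} and \ref{LEBJ}. The last of these handles an arbitrary monotone competitor, Cantor part and countably many jumps included, via the approximation of Section~\ref{SA}.

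Moreover, even granting that $DU$ has no diffuse part, a pure-jump $U$ may have infinitely many jumps accumulating at a point. Then there are no consecutive jumps with a plateau between them, and your counting cannot start. For a monotone piece of $U$ your idea can be repaired by placing the single jump of size $\delta$ at a random point distributed according to $\delta^{-1}DU$ on $I$. The expected fidelity increase is then $\frac{\lambda}{2}\delta^2\int_I\theta(1-\theta)\,dx$ with $\theta=(U-U(x_0+0))/\delta$, independently of $g$. But this is not in your proposal, and it does not address non-monotone pieces or accumulation of jumps.

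There are also smaller slips.

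\emph{Same-direction case.} The choice that works is the weighted one. Average the two merges (middle plateau moved to the left value, respectively to the right value) with weights $\rho_2/(\rho_1+\rho_2)$ and $\rho_1/(\rho_1+\rho_2)$. This cancels $g$ and gives exactly $\frac{\lambda}{2}(q-p)\rho_1\rho_2$, so in fact $q-p\ge 2C_M/\lambda$.

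\emph{Bump case.} You must flatten to the neighbouring value nearer to the middle plateau; for an upward bump that is the higher one. Flattening to the lower one puts $\max\rho_i$ into the fidelity cost and gives no lower bound on $q-p$.

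\emph{Monotonicity step.} The inequality $TV_K(U^\sharp)\le TV_K(U)$ is true, but subadditivity points the wrong way: it says a long jump of $U$ is cheaper than the gaps it covers. The correct argument assigns to each gap between consecutive sorted values a distinct jump of $U$ covering it, and uses only that $K$ is non-decreasing. Hall's condition holds: cutting the value line at any $k$ gaps leaves $k+1$ blocks, all visited by the sequence of plateau values, so at least $k$ jumps of $U$ cross those gaps. Finally, rearrangement only shows that $U^\sharp$ is again a minimizer. To conclude that every minimizer is monotone you need a strict inequality, and the opposite-direction estimate does not supply it, since it merely bounds the width of a bump from below.
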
 

This is a combination of the statements of \cite[Theorem 1.1 and Theorem 1.2]{GKKOS}.
 As noted in \cite{GKKOS} this result gives an upper bound of the number of segments in segmentation problem.
 As a clustering problem, this gives an upper bound of number of clusters. 
 If we weaken (K3) into (K3w) we still have the same estimate of the number $m$ of jumps if we restrict ourselves in the space of piecewise constant functions with possibly infinitely many jumps instead of whole $BV(\Omega)$.
 This is already noted in \cite[Theorem 1.4]{GKKOS} and it applies, for example, to Potts model when $K(\rho)\equiv1$ for $\rho>0$.
 It does not satisfy (K3) but it satisfies (K3w) as well as (K1) and (K2).

The purposes of this paper are as follows.
\begin{enumerate} 
\item[(i)] 
We give an explicit estimate for $TV_{Kg}$ when $g$ is a monotone function.
 This is done by approximation by piecewise constant functions.
 As a result we improve the estimate of $m$ by
\[
	m \le \left[ (b-a) \lambda / 2C_M \right] + 1.
\]
\item[(i\hspace{-0.1em}i)] 
As an application of our approximation, we prove that there exists a piecewise constant minimizer having estimates of $m$.
\item[(i\hspace{-0.1em}i\hspace{-0.1em}i)]
When $g(x)=x$, we prove that all jumps $u^+-u^-$ are the same.
 Moreover, we show that minimizers are not unique.
\item[(i\hspace{-0.1em}v)]
We give a numerical experiment to compare segmentation results by $TV_g$, $TV_{Kg}$ and Mumford-Shah functional.
\end{enumerate} 

Let us state results (i), (i\hspace{-0.1em}i), (i\hspace{-0.1em}i\hspace{-0.1em}i) in a precise way.
 In the sequel, we often use the notation $w(a\pm0)$ defined by
\[
    w(a \pm 0) := \lim_{\delta\downarrow0} w(a\pm\delta),
\]
where $w$ is a function of one real variable.
 If $g$ is a monotone function, we are able to estimate $TV_{Kg}$.
\begin{lemma} \label{LWEBJ}
Assume that $K$ satisfies (K1), (K2) and (K3).
 Assume that $g\in C[a,b]$ is non-decreasing.
 Let $M>0$ be taken so that $M\ge g(b)-g(a)$.
 Assume that $C_*=C_M-\lambda(b-a)/2>0$.
 Let $v$ be a non-decreasing function on $(a,b)$ satisfying $v(\alpha+0)=g(\alpha)$, $v(\beta-0)=g(\beta)$ for some $\alpha<\beta$ with $\alpha\ge a$, $\beta\le b$.
\[
	TV_{Kg}(v) \ge C_* f(v)
	+ K \left( \left| v(\beta-0) - v(\alpha+0) \right| \right)
\]
with some nonnegative explicit function $f$ depending on jumps of $v$.
 Moreover $f(v)=0$ if and only if $v$ has only one jump in $(\alpha,\beta)$.
\end{lemma}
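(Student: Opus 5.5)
The plan is to get the whole inequality from the jump part of $TV_K$, using an iterated form of (K2). The fidelity term will simply be discarded, since it is nonnegative. Because $C_*\le C_M$, it then suffices to prove the inequality with $C_M$ in place of $C_*$. The weaker constant $C_*=C_M-\lambda(b-a)/2$ only matters later, when the fidelity of $v$ is compared with that of a competitor. I will take $v$ to be restricted to $(\alpha,\beta)$, with $TV_{Kg}$ evaluated on $(\alpha,\beta)$; on a larger interval every term only increases.

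First decompose $Dv$ on $(\alpha,\beta)$. Since $v$ is non-decreasing and bounded by $g(\alpha)$ and $g(\beta)$, it lies in $BV(\alpha,\beta)$. Its jumps $d_i=v(x_i+0)-v(x_i-0)>0$ are at most countably many. Let $s=|D^d v|((\alpha,\beta))\ge 0$ be the mass of the diffuse (absolutely continuous plus Cantor) part. Monotonicity and the boundary conditions give
\[
\sum_i d_i + s = v(\beta-0)-v(\alpha+0) = g(\beta)-g(\alpha)=:D\le M,
\]
and
\[
TV_K(v)=\sum_i K(d_i)+s.
\]

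Second, iterate (K2). By induction on the number of terms, for any finitely many $\rho_1,\dots,\rho_N\ge0$ with $\sum\rho_k\le M$,
\[
\sum_{k=1}^N K(\rho_k)\ \ge\ K\Bigl(\sum_k\rho_k\Bigr)+C_M\sum_{k<l}\rho_k\rho_l .
\]
The induction step applies (K2) to $\rho_1+\dots+\rho_{N-1}$ and $\rho_N$. The sum stays $\le M$ at each stage, so the same constant $C_M$ can be used throughout.

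To handle the diffuse part, write $s=\lim_{n\to\infty} nK(s/n)$, which holds by (K3). Apply the iterated inequality to the first $J$ jumps $d_1,\dots,d_J$ together with $n$ copies of $s/n$, whose total is at most $D\le M$:
\[
\sum_{i\le J}K(d_i)+nK(s/n)\ \ge\ K\Bigl(\sum_{i\le J}d_i+s\Bigr)+C_M\Bigl(\sum_{i<j\le J}d_id_j+s\sum_{i\le J}d_i+\tfrac{n-1}{2n}s^2\Bigr).
\]
Now let $n\to\infty$ and then $J\to\infty$. On the right, lower semicontinuity of $K$ from (K1) controls the first term, and the remaining series converge because $\sum d_i\le M$. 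This gives
\[
TV_{Kg}(v)\ \ge\ TV_K(v)\ \ge\ K(D)+C_M f(v)\ \ge\ K(D)+C_* f(v),
\]
with the explicit function
\[
f(v)=\sum_{i<j}d_id_j+s\sum_i d_i+\tfrac12 s^2 .
\]

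Finally, the characterization of $f(v)=0$. Since all terms of $f$ are nonnegative, $f(v)=0$ if and only if $s=0$ and at most one $d_i$ is nonzero. When $g(\beta)>g(\alpha)$, this means exactly one jump carrying the whole increment. (When $D=0$, $v$ is constant and the claim is trivial or should be read as ``no jump''.)

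The step I expect to need the most care is the limiting argument in the third paragraph. It requires the uniform constant $C_M$ under the constraint that the total stays $\le M$, which is supplied by $D\le M$. It also requires the correct direction of lower semicontinuity when passing $J\to\infty$ inside $K$.
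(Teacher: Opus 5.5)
Your argument is correct, and it takes a genuinely different and much shorter route than the paper's.

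\textbf{The paper's route.} The paper derives the statement from its precise form, Lemma~\ref{LEBJ}, whose right-hand side is $TV_{Kg}(U_0)+\frac{C_*}{2}(\cdots)$. Here $U_0$ is the best one-jump competitor, fidelity included. To prove Lemma~\ref{LEBJ}, the paper:
\begin{itemize}
\item approximates $v$ by monotone step functions (Corollary~\ref{CApMon}, Remark~\ref{RApMon});
\item moves the jump points to optimize the fidelity (Proposition~\ref{PMF});
\item merges jumps one at a time as in Lemma~\ref{LND}, which is where $C_*$ enters, through the fidelity gain $\frac\lambda2\delta(1-\delta)\rho^2(\beta-\alpha)$ of Lemma~\ref{LFid};
\item passes to the limit with the jump-convergence results Theorem~\ref{TCJ}, Lemma~\ref{LCIn} and Proposition~\ref{PSec}.
\end{itemize}
The weak form then follows because $TV_{Kg}(U_0)\ge K(\rho)$.

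\textbf{Your route and what it buys.} You discard the fidelity and work directly with $Dv$. You replace the diffuse mass $s$ by $n$ jumps of size $s/n$ via (K3), apply the iterated (K2) (the paper's Proposition~\ref{PInd}) to finitely many pieces, and close with lower semicontinuity of $K$. This gains three things:
\begin{itemize}
\item it avoids all the approximation and jump-convergence machinery;
\item it yields the constant $C_M$, so $C_*>0$ is not even needed for the stated inequality;
\item your $f=\sum_{i<j}d_id_j+s\sum_i d_i+\tfrac12 s^2=\tfrac12\bigl(D^2-\sum_i d_i^2\bigr)$ exceeds the paper's by the cross term $s\sum_i d_i$, which Proposition~\ref{PSec} discards.
\end{itemize}

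\textbf{What your route loses.} It does not give the comparison with $TV_{Kg}(U_0)$, and that is what the paper actually uses. In the proof of Theorem~\ref{TMainMon}, a minimizer $U$ on $(x_1,x_2)$ satisfies $TV_{Kg}(U)\le TV_{Kg}(U_0)$, and only the precise form then forces $f(U)=0$. Your bound would give merely $C_M f(U)\le\mathcal{F}(U_0)$ on $(x_1,x_2)$, which does not rule out several jumps. So your proof settles the lemma as stated, but it cannot replace Lemma~\ref{LEBJ} in the paper's application.
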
 
As an application, we have an improved estimate for $m$.

\begin{thm} \label{TMainMon}
Assume that $K$ satisfies (K1), (K2) and (K3).
 Assume that $g\in C[a,b]$ is non-decreasing (resp.\ non-increasing).
 Let $M$ be taken such that $	M\geq\operatorname{osc}_{[a,b]}g=\left|g(b)-g(a)\right|$.
 Let $U\in BV(a,b)$ be a minimizer of $TV_{Kg}$.
 Then $U$ must be a non-decreasing (resp.\ non-increasing) piecewise constant function satisfying $\inf g\leq U\leq\sup g$ on $[a,b]$.
 The number $m$ of jumps is estimated as
\[
	m \leq \left[(b-a)\lambda/(2C_M) \right] + 1.
\]
\end{thm}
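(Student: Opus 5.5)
By Theorem~\ref{TEJ}, a minimizer $U$ is already known to be piecewise constant with finitely many jumps, monotone in the same direction as $g$, and to satisfy $\min g\le U\le\max g$. Only the improved bound on $m$ is new. By the symmetry $g\mapsto -g$, $U\mapsto -U$ we may assume $g$ is non-decreasing. I would prove the bound by a local comparison argument, combining a structural description of $U$ with Lemma~\ref{LWEBJ} applied on short subintervals.

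The first step is to describe $U$ between consecutive jumps. Let $x_1<\dots<x_m$ be the jumps and $U\equiv c_i$ on $(x_i,x_{i+1})$, with $x_0=a$ and $x_{m+1}=b$. Varying $c_i$ alone changes only the fidelity term and the two adjacent jump costs. Varying the position $x_i$ alone changes only the fidelity term, and first-order optimality in $x_i$ gives the balance condition
\[
	\bigl(c_{i}-g(x_i)\bigr)^2=\bigl(c_{i-1}-g(x_i)\bigr)^2,
	\qquad\text{so}\qquad g(x_i)=\tfrac12(c_{i-1}+c_i),
\]
since $c_{i-1}<c_i$. Hence $g$ crosses the midpoint of each jump, and by monotonicity $c_i$ lies between $g(x_i)$ and $g(x_{i+1})$. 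Interior plateaus then satisfy $g(\alpha_i)=c_i$ at some point $\alpha_i\in[x_i,x_{i+1}]$.

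The second step is the comparison. Suppose $m\ge \left[(b-a)\lambda/(2C_M)\right]+2$. Then some window containing two consecutive jumps, from the point $\alpha_{i-1}$ where $g=c_{i-1}$ to the point $\alpha_{i+1}$ where $g=c_{i+1}$, has length $\ell<2C_M/\lambda$. I would obtain this by averaging: the disjoint-ish windows $(\alpha_{i-1},\alpha_{i+1})$ tile $(a,b)$ at most twice. This averaging is where the factor $2$ over Theorem~\ref{TEJ} comes from. On such a window I would apply Lemma~\ref{LWEBJ} with $(a,b)$ replaced by $(\alpha,\beta)=(\alpha_{i-1},\alpha_{i+1})$, so that $C_*=C_M-\lambda\ell/2>0$. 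This gives
\[
	TV_{Kg}(U|_{(\alpha,\beta)})\ge C_*f(U)+K(c_{i+1}-c_{i-1}),
\]
with $f(U)>0$ because there are two jumps. I would then construct a competitor equal to $U$ outside $(\alpha,\beta)$ that has a single jump of size $c_{i+1}-c_{i-1}$ inside. Its energy on the window is $K(c_{i+1}-c_{i-1})$ plus a fidelity term. I would need the explicit form of $f$ from the proof of the lemma to show that this fidelity term is bounded by something strictly less than $C_*f(U)$, or at most equal to it, using the midpoint placement of the new jump. Comparing the two gives a strict decrease, which contradicts minimality.

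I expect the main obstacle to be this last comparison. It requires the concrete form of $f$ in Lemma~\ref{LWEBJ}, roughly $f\approx\sum\rho_j\rho_k$ over pairs of jumps, from (K2), and a matching upper bound on the fidelity of the one-jump competitor. That upper bound is of order $\tfrac{\lambda}{2}\ell\,\rho_1\rho_2$, coming from $|U-g|\le$ jump size on a set of length $\ell$. It is precisely to absorb this bound that $C_*$ is defined as $C_M-\lambda(b-a)/2$. The boundary windows at $a$ and $b$, where $U$ need not attain $g$, also need separate handling. For these I would use the endpoint plateaus and accept the "$+1$" in the bound.
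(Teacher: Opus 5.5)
Relying on Theorem~\ref{TEJ} for piecewise constancy, monotonicity and $\min g\le U\le\max g$ is fine, and so is deriving $g(x_i)=\tfrac12(c_{i-1}+c_i)$. The paper instead re-derives piecewise constancy by showing that the contact set $C_\Gamma$ is discrete via Lemma~\ref{LEBJ}.

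\textbf{The gap is the counting step, exactly where you place the factor $2$.} Suppose all you know is that each window $(\alpha_{i-1},\alpha_{i+1})$ has length at least $D:=2C_M/\lambda$. These windows cover each point at most twice, so even including the boundary windows you get $(m-1)D\le 2(b-a)$, i.e.\ $m\le[(b-a)\lambda/C_M]+1$. The overlap \emph{costs} a factor $2$, and you are back at the bound of Theorem~\ref{TEJ}. Your implication ``$m\ge[(b-a)\lambda/(2C_M)]+2$ forces a window shorter than $D$'' is false. Take $b-a=1.5D$, $m=3$, and contact points at $a$, $a+0.4D$, $a+D$, $a+1.4D$. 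Both windows have length exactly $D$, yet the theorem asserts $m\le 2$.

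\textbf{What you need is that consecutive jumps are at least $D$ apart.} Then the gaps $(x_i,x_{i+1})$ are disjoint, and $(m-1)D\le x_m-x_1<b-a$ gives the stated bound with no boundary treatment. To prove it, set $\rho_i=c_i-c_{i-1}$ and modify $U$ on $(x_i,x_{i+1})$: replace $c_i$ by $c_{i-1}$ on $(x_i,y)$ and by $c_{i+1}$ on $(y,x_{i+1})$, where $g(y)=\tfrac12(c_{i-1}+c_{i+1})$, so $y\in[x_i,x_{i+1}]$. No new jump appears. Since $\rho_i+\rho_{i+1}\le\operatorname{osc}g\le M$, (K2) shows the jump cost drops by at least $C_M\rho_i\rho_{i+1}$. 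Pointwise,
\begin{align*}
(c_{i-1}-g)^2-(c_i-g)^2&=2\rho_i\bigl(g-g(x_i)\bigr)\le\rho_i\rho_{i+1}\quad\text{on }(x_i,y),\\
(c_{i+1}-g)^2-(c_i-g)^2&=2\rho_{i+1}\bigl(g(x_{i+1})-g\bigr)\le\rho_i\rho_{i+1}\quad\text{on }(y,x_{i+1}).
\end{align*}
So the fidelity rises by at most $\frac\lambda2\rho_i\rho_{i+1}(x_{i+1}-x_i)$, and minimality forces $x_{i+1}-x_i\ge 2C_M/\lambda$. This is the line in the proof of Lemma~\ref{LFid} just before $x_2-x_1$ is replaced by $\beta-\alpha$. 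Your windows inherit that lossy replacement through $C_*$. The paper settles the count in one sentence about distances between points of $C$, and this jump-gap estimate is what makes that sentence rigorous.

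\textbf{Your closing comparison also fails as written.} The competitor's \emph{total} fidelity on the window does not vanish as $\rho_2\to0$, so it cannot be dominated by $C_*f(U)$. Use the precise form, Lemma~\ref{LEBJ}, whose right-hand side is $TV_{Kg}(U_0)$ with the fidelity included. Gluing $U_0$ into $U$ then already gives the contradiction. The increment $\frac\lambda2\ell\rho_1\rho_2$ is what $C_*$ absorbs; it should not be compared against $C_*f(U)$ a second time.
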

This can be proved by Lemma~\ref{LWEBJ} and approximation.
 If $g$ is merely bounded, i.e., $g\in L^\infty(a,b)$, then by approximation and Lemma~\ref{LWEBJ}, we are able to conclude that Theorem~\ref{TMainMon} is still valid for $g\in L^\infty(a,b)$.
\begin{cor} \label{CMain}
The conclusion of Theorem~\ref{TMainMon} still valid for $g\in L^\infty(a,b)$ instead of $g\in C[a,b]$.
\end{cor}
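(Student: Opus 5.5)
The plan is to reduce Corollary~\ref{CMain} to Theorem~\ref{TMainMon} by approximating $g$, under the same monotonicity standing assumption. Note first that a monotone $g\in L^\infty(a,b)$ has a non-decreasing (resp.\ non-increasing) representative whose oscillation is $\sup g-\inf g\le M$. Four steps follow.

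\textbf{Step 1 (approximation).} Take continuous non-decreasing $g_k\in C[a,b]$ with $\inf g\le g_k\le\sup g$ and $g_k\to g$ in $L^2(a,b)$ (and a.e.). Mollifying $g$ after extending it by its boundary limits does this while preserving monotonicity and the bounds. Since $\operatorname{osc} g_k\le M$, Theorem~\ref{TMainMon} applies to each minimizer $U_k$ of $TV_{Kg_k}$. Each $U_k$ is therefore non-decreasing and piecewise constant, satisfies $\inf g\le U_k\le\sup g$, and has at most $N:=[(b-a)\lambda/(2C_M)]+1$ jumps.

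\textbf{Step 2 (compactness).} The $U_k$ are uniformly bounded and monotone. Helly's theorem and the bound on the number of jumps give a subsequence converging pointwise and in $L^2$ to $U$. Moreover, the jump locations and the constant values on the at most $N+1$ intervals converge after passing to a subsequence. Hence $U$ is non-decreasing, piecewise constant, bounded by $\inf g,\sup g$, and has at most $N$ jumps: jumps may merge or vanish, but cannot be created.

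\textbf{Step 3 (minimality of the limit).} Fix any $w\in BV(a,b)$. Then $TV_{Kg_k}(U_k)\le TV_{Kg_k}(w)\to TV_{Kg}(w)$, because the fidelity term is continuous in $L^2$. For the left side we need $\liminf TV_K(U_k)\ge TV_K(U)$. Write $TV_K(U_k)=\sum_j K(\rho_j^k)$. When several jumps of $U_k$ merge into one jump of $U$ of size $\rho$, subadditivity (from (K2)) together with lower semicontinuity and monotonicity (K1) gives $\liminf\sum K(\rho_j^k)\ge K(\rho)$. Jumps that collapse to zero size contribute $\ge 0$. Hence $TV_{Kg}(U)\le\liminf TV_{Kg_k}(U_k)\le TV_{Kg}(w)$, so $U$ is a minimizer. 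This already yields existence of a minimizer with the stated properties.

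\textbf{Step 4 (every minimizer).} The corollary claims the conclusion for every minimizer $U$ of $TV_{Kg}$, and this is the main obstacle. I would redo the proof of Theorem~\ref{TMainMon} directly. That proof presumably uses only three things. First, truncation to $[\inf g,\sup g]$, which needs only boundedness of $g$. Second, monotone rearrangement, which shows a minimizer is monotone when $g$ is; this uses only the ordering of $g$. Third, Lemma~\ref{LWEBJ} applied on intervals where $U$ is continuous or has several jumps, comparing with a competitor that has a single jump. The continuity of $g$ enters only through the anchoring conditions $v(\alpha+0)=g(\alpha)$ and $v(\beta-0)=g(\beta)$. For a monotone $L^\infty$ function I would replace these by one-sided limits $g(\alpha\pm0)$, which exist everywhere. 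The integral inequalities in the proof of Lemma~\ref{LWEBJ} only involve $\int|v-g|^2$ and the monotonicity of $g$, so they should survive with the one-sided values.

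First, I would check that the lemma's computation is insensitive to modifying $g$ on null sets. Then I would argue that any non-constant continuous part, or any excess of jumps beyond $N$, could be strictly improved by a competitor, exactly as in the continuous case. If this direct route fails, I would fall back on showing by strict inequality $C_*>0$ that any minimizer must coincide with a limit obtained as in Steps 1–3.
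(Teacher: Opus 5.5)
Your Steps 1--3 are correct. They are the approximation argument the paper points to for this corollary; it is written out in the proof of Corollary~\ref{CMain2}, with Theorem~\ref{TEJ} in place of Theorem~\ref{TMainMon}. They show that \emph{some} minimizer is a non-decreasing step function with values in $[\inf g,\sup g]$ and at most $N$ jumps.

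The statement, however, concerns \emph{every} minimizer. Minimizers need not be unique (Corollary~\ref{CSJ}), so a given minimizer $U$ need not be a limit of your $U_k$. Step 4 is therefore essential, and as sketched it does not go through. Continuity of $g$ is not used only in the anchoring $v(\alpha+0)=g(\alpha)$, $v(\beta-0)=g(\beta)$. The proof of Theorem~\ref{TMainMon} is built on the contact set $C=\{U=g\}$: it uses that $C_\Gamma\neq\emptyset$, that $U$ is continuous on $C$, and that $U$ is constant off $C$, all obtained from continuity of $g$. Lemma~\ref{LEBJ} is then applied between two contact points.

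For discontinuous $g$, a minimizer can cross $g$ at a jump of $g$ without touching it. Take $g=\chi_{(c,b)}$ and $K(\rho)=\rho/(1+\rho)$. Lowering the step by $s$ on each side changes the energy by $-\frac{s}{2(1-s)}+\frac{\lambda}{2}(b-a)s^2<0$ for small $s>0$. A similar first-variation argument shows that a minimizer satisfies $0<U(c-0)\le U(c+0)<1$. So there is no anchor at $c$, and one-sided limits of $g$ do not supply admissible endpoint values for the competitor in Lemma~\ref{LEBJ}. The fallback in your last sentence is not yet an argument.

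What closes the gap is a selection trick that reuses your Steps 1--3. Fix an arbitrary minimizer $U$. By truncation, $\inf g\le U\le\sup g$. Grant, as you do in Step 4, that $U$ is non-decreasing; this is needed so that $g_\varepsilon$ below is monotone. For $\varepsilon>0$ set $g_\varepsilon=(\lambda g+\varepsilon U)/(\lambda+\varepsilon)$. Then
\[
TV_{Kg}(u)+\frac{\varepsilon}{2}\int_a^b|u-U|^2\,dx
=TV_K(u)+\frac{\lambda+\varepsilon}{2}\int_a^b|u-g_\varepsilon|^2\,dx
+\frac{\lambda\varepsilon}{2(\lambda+\varepsilon)}\int_a^b|g-U|^2\,dx .
\]
The left-hand side has $U$ as its \emph{unique} minimizer, because $TV_{Kg}(u)\ge TV_{Kg}(U)$ for all $u$.

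Now $g_\varepsilon$ is non-decreasing with $\operatorname{osc} g_\varepsilon\le M$. So Steps 1--3, applied to the data $g_\varepsilon$ and parameter $\lambda+\varepsilon$, produce a minimizer that is a non-decreasing step function with at most $[(b-a)(\lambda+\varepsilon)/(2C_M)]+1$ jumps. By uniqueness this minimizer is $U$. Since the integer part is right-continuous, this bound equals $[(b-a)\lambda/(2C_M)]+1$ for small $\varepsilon$. This gives the full conclusion of Theorem~\ref{TMainMon} for an arbitrary minimizer.
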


For non-monotone discontinuous $g$, we do not have quantitative estimate so we have a weaker result which does not exclude the possibility of existence of non-piecewise constant minimizer.
\begin{cor} \label{CMain2}
Assume that $K$ satisfies (K1), (K2) and (K3).
 Assume that $g\in L^\infty(a,b)$.
 Then there exists a piecewise constant minimizer $U$ with finitely many jumps for $TV_{Kg}$.
 Let $M>0$ be a number such that
\[
	M \ge \osc_{[a,b]} g
	= \esssup_{[a,b]} g - \essinf_{[a,b]} g.
\]
Let $U$ be a piecewise constant minimizer of finitely many jumps.
 Then it must satisfy
\[
	\essinf g \le U \le \esssup g
\]
on $[a,b]$.
 Moreover,
\[
	m \le \left[(b-a)\lambda/A_M \right] + 1
\]
for the number of jumps of $U$, where $A_M$ is defined in Theorem~\ref{TEJ}.
\end{cor}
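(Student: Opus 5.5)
We prove Corollary~\ref{CMain2} by approximating $g$ with continuous data, applying Theorem~\ref{TEJ} to each approximation, and passing to the limit. The passage to the limit uses compactness of piecewise constant functions whose number of jumps is uniformly bounded.

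\emph{Step 1: approximation.} Take $g_k\in C[a,b]$ with $g_k\to g$ in $L^2(a,b)$ and
\[
\essinf g\le g_k\le \esssup g ,
\]
for instance by mollifying and truncating. Then $\osc g_k\le M$. By the direct method, a minimizer $U_k$ of $TV_{Kg_k}$ exists; this is the existence result from \cite{GKKOS}, which we use as given. By Theorem~\ref{TEJ}, $U_k$ is piecewise constant with $\min g_k\le U_k\le\max g_k$, and its number of jumps satisfies
\[
m_k\le N:=\left[(b-a)\lambda/A_M\right]+1 .
\]
The bound $N$ is uniform in $k$, because $M$ is fixed.

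\emph{Step 2: compactness.} Each $U_k$ is described by at most $N$ jump points in $[a,b]$ and at most $N+1$ values in $[\essinf g,\esssup g]$. Passing to a subsequence, the jump points converge (some may merge or escape to $a$ or $b$) and the values converge. Hence $U_k\to U$ in $L^1$ and in $L^2$, where $U$ is piecewise constant with at most $N$ jumps.

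\emph{Step 3: $U$ is a minimizer.} This is the main obstacle.
\begin{itemize}
\item Since $\mathcal{F}$ is continuous in $L^2$ and $g_k\to g$ there, the fidelity terms for data $g_k$ converge to those for $g$, for every $w$, uniformly on bounded sets of $L^2$.
\item We need lower semicontinuity $TV_K(U)\le\liminf_k TV_K(U_k)$ along this particular convergence. When jump points merge, the merged jump $\rho_1+\rho_2$ costs $K(\rho_1+\rho_2)\le K(\rho_1)+K(\rho_2)$ by subadditivity (K2). When jump sizes converge, lower semicontinuity of $K$ (K1) handles the limit. A jump escaping to the boundary simply disappears.
\item For any competitor $w\in BV$, write $\mathcal{F}_g$ for the fidelity term with data $g$. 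Then
\[
TV_{Kg}(U)\le\liminf_k TV_{Kg_k}(U_k)\le\liminf_k TV_{Kg_k}(w)=TV_{Kg}(w).
\]
\end{itemize}
Therefore $U$ is a piecewise constant minimizer with finitely many jumps.

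\emph{Step 4: properties of any piecewise constant minimizer $U$ with finitely many jumps.} For the bounds, truncate: replace $U$ by $\min(\max(U,\essinf g),\esssup g)$. Truncation does not increase any jump, so by monotonicity of $K$ it does not increase $TV_K$. It strictly decreases $\mathcal{F}$ unless $U$ already lies in $[\essinf g,\esssup g]$. Hence $\essinf g\le U\le\esssup g$.

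For the jump count, repeat the argument of \cite{GKKOS} for Theorem~\ref{TEJ}, which only compares $U$ with piecewise constant competitors and so does not need continuity of $g$. Suppose two neighbouring jumps of sizes $\rho_1,\rho_2$ lie on a plateau of length $\ell$. Modify $U$ on that plateau by merging the two jumps, or by removing the smaller one. By (K2) the jump energy decreases by at least $C_M\rho_1\rho_2$ in the merging case, or by a comparable amount via (K3w) otherwise. The fidelity term increases by at most about $\lambda\ell\rho_1\rho_2$. Minimality then forces $\ell\ge A_M/\lambda$ for all but possibly one plateau. Summing the plateau lengths, which total at most $b-a$, gives $m\le[(b-a)\lambda/A_M]+1$.

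The two delicate points are the explicit fidelity estimate in this last comparison, which must be checked to use only $L^\infty$ bounds on $g$, and the lower semicontinuity argument in Step 3.
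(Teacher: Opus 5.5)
Your proposal is correct. Steps 1--3 follow the paper's proof: approximate $g$ by continuous $g_\ell$ with $\essinf g\le g_\ell\le\esssup g$, use Theorem~\ref{TEJ} to get a uniform jump bound, extract a limit in the finite-dimensional family, and conclude by lower semicontinuity of $TV_K$.

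There are three differences worth noting.
\begin{enumerate}
\item The paper does not compare $U_\ell$ with the competitor $u$ itself. It compares with a piecewise constant recovery sequence $u_\ell$, citing Lemma~\ref{LApC}, which is stated only for continuous $u$. Your direct comparison $TV_{Kg_k}(U_k)\le TV_{Kg_k}(w)\to TV_{Kg}(w)$ shows this detour is unnecessary. This is because $U_k$ minimizes over all of $BV$ and the fidelity depends continuously on $g$ in $L^2$.
\item You verify lower semicontinuity by hand on the finite-jump family, whereas the paper cites \cite[Proposition 2.4]{GKKOS}. Merged jumps, possibly of opposite signs, are handled by monotonicity plus subadditivity of $K$.
\item The paper's proof stops at existence. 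There the range bound and the jump count are obtained only for the constructed limit, inherited from the $U_\ell$. Your Step 4 proves them for an arbitrary piecewise constant minimizer, which is what the statement actually asserts. Your truncation argument for the range bound is complete as written.
\end{enumerate}

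The jump-count comparison in Step 4 reaches the right conclusion, but the fidelity estimate as you wrote it is wrong in one case and imprecise in the other. Let $h$ be the value on an interior facet of length $\ell$, let $\bar g$ be the mean of $g$ there, and set $t=\bar g-h$. Only $\bar g$ matters, since $\int(c-g)^2\,dx=\ell(c-\bar g)^2+\text{const}$ over the facet.

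\emph{Monotone case.} Here the two jumps $\rho_1,\rho_2$ go in the same direction. Merging upward (raising $h$ by $\rho_2$) costs $\frac{\lambda\ell}{2}(\rho_2^2-2\rho_2 t)$. Merging downward (lowering $h$ by $\rho_1$) costs $\frac{\lambda\ell}{2}(\rho_1^2+2\rho_1 t)$. Their average with weights $\rho_1/(\rho_1+\rho_2)$ and $\rho_2/(\rho_1+\rho_2)$ is exactly $\frac{\lambda\ell}{2}\rho_1\rho_2$. So the better of the two merges costs at most this. The gain is at least $C_M\rho_1\rho_2$; (K2) applies because the range bound gives $\rho_1+\rho_2\le M$. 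You must choose the direction, since a fixed direction can cost more.

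\emph{Extremum case.} Take a local maximum with $\rho_1\le\rho_2$. Lowering $h$ by $\rho_1$ gains at least $K(\rho_1)\ge c_M\rho_1$. It costs $\frac{\lambda\ell}{2}\rho_1\bigl(2\bar g-h-(h-\rho_1)\bigr)\le\lambda\ell M\rho_1$, using the range bound on $\bar g$, $h$ and $h-\rho_1$. This cost is not $O(\lambda\ell\rho_1\rho_2)$ in general. At a minimizer, $\bar g-h$ can be of order $1/(\lambda\ell)$. This case is exactly where the term $c_M/M$ in $A_M$ comes from.

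Hence every one of the $m-1$ interior facets, not "all but one plateau", has length at least $\min\{2C_M,c_M/M\}/\lambda\ge A_M/\lambda$. Since the two boundary facets have positive length, $(m-1)A_M/\lambda<b-a$, which gives the stated bound. Only facet means of $g$ and its essential range enter, so continuity of $g$ is indeed not needed.
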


We next observe that all jumps $u^+-u^-$ has the same size if $g(x)$ is linear.
 Moreover, by an explicit calculation, we found an example that minimizers are not unique.
 We state it just for the case $K(\rho)=\rho/(1+\rho)$. 
\begin{thm} \label{TSJ}
Assume that $g$ is linear with $g'>0$ and $K(\rho)=\rho/(1+\rho)$.
 For $\Omega=(a,b)$, let $U$ be a non-decreasing piecewise constant minimizer of $TV_{Kg}$ which only jumps at $a<a_1<a_2<\cdots<a_m<b$.
 Then $a_{i+1}-a_i$ ($i=1,\ldots,m-1$) and jump size $U(a_i+0)-U(a_i-0)$ ($i=1,\ldots,m$) are independent of $i$.
\end{thm}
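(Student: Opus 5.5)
Since $U$ is a non-decreasing piecewise constant minimizer with jumps at $a_1<\dots<a_m$, I write $U=c_i$ on $(a_i,a_{i+1})$ for $i=0,\dots,m$, with $a_0=a$ and $a_{m+1}=b$. Then $TV_{Kg}(U)$ is an explicit function of the finitely many parameters $(a_1,\dots,a_m,c_0,\dots,c_m)$:
\[
E=\sum_{i=1}^m K(c_i-c_{i-1})+\frac\lambda2\sum_{i=0}^m\int_{a_i}^{a_{i+1}}|c_i-g(x)|^2\,dx .
\]
Because $U$ minimizes over all of $BV$, it minimizes $E$ over this finite-dimensional set. The configuration is interior: the $a_i$ are distinct, and the jumps $d_i:=c_i-c_{i-1}$ are positive, since a zero jump is not a jump. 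The first-order conditions therefore hold. I take $g(x)=sx+t$ with $s>0$; the translation $t$ is irrelevant.

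Step 1, variation in $c_i$. For an interior plateau $1\le i\le m-1$ this gives
\[
\lambda\int_{a_i}^{a_{i+1}}(c_i-g)\,dx=K'(d_{i+1})-K'(d_i).
\]
For the end plateaus $i=0$ and $i=m$ only one $K'$ term appears. Step 2, variation in $a_i$. Differentiating the fidelity integrals in the endpoints gives $|c_{i-1}-g(a_i)|^2=|c_i-g(a_i)|^2$. Since $c_{i-1}<c_i$, this forces $g(a_i)=(c_{i-1}+c_i)/2$, so each jump sits where $g$ crosses the midpoint.

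From the midpoint conditions I get $s(a_{i+1}-a_i)=(d_i+d_{i+1})/2$ for $1\le i\le m-1$. The mean of $g$ over $(a_i,a_{i+1})$ is $g$ at the midpoint of the interval, and it equals $c_i+(d_{i+1}-d_i)/4$. Writing $L_i=a_{i+1}-a_i=(d_i+d_{i+1})/(2s)$, Step 1 becomes
\[
K'(d_{i+1})-K'(d_i)=-\lambda L_i\,\frac{d_{i+1}-d_i}{4}
=-\frac{\lambda}{8s}\,(d_{i+1}+d_i)(d_{i+1}-d_i)=-\frac{\lambda}{8s}\bigl(d_{i+1}^2-d_i^2\bigr).
\]
Hence $\Phi(d):=K'(d)+\frac{\lambda}{8s}d^2$ satisfies $\Phi(d_{i+1})=\Phi(d_i)$ for all $i$. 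For $K(\rho)=\rho/(1+\rho)$ we have $K'(d)=(1+d)^{-2}$, and $\Phi$ need not be monotone, so this alone does not give equal jumps.

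Step 3 is the key step, where I expect the main obstacle. I would use the second-order condition, or a direct comparison, to exclude $d_i\ne d_{i+1}$ with $\Phi(d_i)=\Phi(d_{i+1})$. I would try a local comparison first. Fix $a_{i-1}$, $a_{i+1}$, $c_{i-1}$ and $c_{i+1}$, and vary the pair $(a_i,c_i)$ keeping the midpoint relations. This gives a one-parameter family, say parametrized by $d_i$ with $d_i+d_{i+1}$ fixed, and along it the energy is an explicit function $e(d_i)$. I would then compute that $e$ is symmetric in the exchange $d_i\leftrightarrow d_{i+1}$. This is a reflection argument: reflecting the local picture about the midpoint of $(a_{i-1},a_{i+1})$ together with $u\mapsto -u$ preserves linear $g$. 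By symmetry, $e'$ vanishes at $d_i=d_{i+1}$. The key claim is that $e'$ has no other zero, because $e'(d)$ is a combination of $K'(d)-K'(D-d)$ and a term linear in $(2d-D)$, where $D=d_i+d_{i+1}$. For the explicit $K$, $(1+d)^{-2}-(1+D-d)^{-2}$ divided by $(2d-D)$ is a rational function of $d(D-d)$, so the critical-point equation reduces to a single-variable equation in $d(D-d)$. I would check that any asymmetric critical point is a local maximum of $e$, not a minimum, contradicting minimality.

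Once $d_i=d_{i+1}$ for all adjacent pairs, all jumps are equal to some $d$. Then $L_i=d/s$ for $1\le i\le m-1$, so the spacings $a_{i+1}-a_i$ are also independent of $i$.
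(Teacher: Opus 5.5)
Your Step~3 is essentially the paper's proof (Theorem~\ref{TSJG} combined with Proposition~\ref{PExj}): freeze the two outer plateau values, move the middle value with both jumps kept at the $g$-midpoints, use evenness of the local energy, and use that $e'(d)=(2d-D)\bigl[\frac{\lambda D}{8s}-\frac{2+D}{(1+D+d(D-d))^2}\bigr]$ has a bracket monotone in $d(D-d)$, which is exactly condition (Q); your Step~1 and the function $\Phi$ are not needed. Two small corrections: your claim that $e'$ has no other zero is false, since $e'$ can vanish again in $(D/2,D)$ (the paper's case (ii), a strict local maximum), but your fallback claim that such an asymmetric critical point is a local maximum is true and suffices; and since $c_i$ borders both $a_i$ and $a_{i+1}$, both jumps must move to keep the midpoint relations, so you should hold $a_{i-1},a_{i+2}$ fixed (reflecting about $g^{-1}\bigl((c_{i-1}+c_{i+1})/2\bigr)$) rather than $a_{i+1}$.
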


As an application let us state a non-uniqueness result.
\begin{cor} \label{CSJ}
Under the same hypothesis of Theorem~\ref{TSJ}, for some fidelity parameter $\lambda$, there exist two minimizers of $TV_{Kg}$ for $g(x)=x$.
 One has only one jump and the other has two jumps.
\end{cor}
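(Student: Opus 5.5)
Under the hypotheses of Theorem~\ref{TSJ} we take $g(x)=x$, $K(\rho)=\rho/(1+\rho)$ and $\Omega=(0,L)$, and we regard the minimum energy as a function of the number $m$ of jumps. By Theorem~\ref{TMainMon} every minimizer $U$ is a non-decreasing piecewise constant function with finitely many jumps. On each constancy interval the fidelity term forces the value of $U$ to be the mean of $g$ over that interval. Theorem~\ref{TSJ} states that for a minimizer with $m$ jumps all jump sizes are equal and all interior intervals $(a_i,a_{i+1})$ have equal length. Symmetry of $g$ about $L/2$ then suggests the two end intervals also have a common length. Let $\ell$ be the interior length and $s$ the end length, so that $2s+(m-1)\ell=L$. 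Equality of jump sizes gives $(s+\ell)/2 = \ell$, hence $s=\ell/2$ and $\ell = L/m$. The mean of $x$ on an interval of length $h$ has squared deviation integral $h^3/12$. The candidate energy with $m$ jumps is therefore
\[
E_m(\lambda) = m\,\frac{L/m}{1+L/m} + \frac{\lambda}{2}\left(\frac{(m-1)(L/m)^3}{12} + \frac{2(L/(2m))^3}{12}\right),
\]
and $E_0(\lambda)=\lambda L^3/24$ for the constant function.

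The key steps, in order, are as follows.

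First, we verify that among piecewise constant competitors with a prescribed number $m$ of jumps the optimal energy is exactly $E_m(\lambda)$. Theorem~\ref{TSJ} pins the configuration down up to the end lengths, and the end-length relation $s=\ell/2$ follows from the equal-jump condition computed above. Alternatively, minimizing directly over the free jump positions gives the same configuration.

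Second, by Theorem~\ref{TMainMon} the minimum of $TV_{Kg}$ equals $\min_{0\le m\le N}E_m(\lambda)$ with $N=[L\lambda/(2C_M)]+1$. Each $E_m$ is affine in $\lambda$ with positive slope decreasing in $m$, and its constant term $L/(1+L/m)=mL/(m+L)$ increases in $m$.

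Third, we find $\lambda_*$ with $E_1(\lambda_*)=E_2(\lambda_*)\le E_m(\lambda_*)$ for all other $m$, including $m=0$. We have $E_1=L/(1+L)+\lambda L^3/96$ and $E_2=2L/(2+L)+\lambda\cdot 3L^3/(2\cdot 12\cdot 32)$. Setting them equal gives an explicit $\lambda_*$. We then choose $L$ so that at $\lambda_*$ both $E_0$ and $E_3$ exceed the common value; $E_m$ for $m\ge4$ exceed it since these energies are larger still. Then both the one-jump and the two-jump configurations are minimizers, which gives two distinct minimizers.

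The main obstacle is this third step: checking $E_1(\lambda_*)<E_0(\lambda_*)$ and $E_1(\lambda_*)<E_m(\lambda_*)$ for $m\ge3$. The lower envelope of the affine family must actually pass through the $1$--$2$ crossing. I would try a concrete $L$ (say $L=1$, rescaling as needed) and compute all the quantities explicitly. If the $1$--$2$ crossing is not on the lower envelope for small $L$, I would take $L$ larger so that the concavity of $mL/(m+L)$ in $m$ makes the consecutive crossings ordered. That ordering makes the envelope pass through every transition $m\to m+1$, and in particular through $1\to2$.
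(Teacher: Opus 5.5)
Your overall plan matches the paper's: use Theorem~\ref{TSJ} to reduce to a family indexed by the number $m$ of jumps, then compare energies in $m$. However, the family and the energy $E_m$ you write down are wrong.

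First, the end-facet values are not the means of $g$, because the jump term depends on them. In the free problem, stationarity in $h_0$ on $(0,a_1)$ gives $\lambda a_1(h_0-a_1/2)=K'(\rho_1)>0$.

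Second, even granting the means, $(s+\ell)/2=\ell$ gives $s=\ell$, not $s=\ell/2$.

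Third, your $E_m$ is not the energy of any single function. It charges $mK(L/m)$, i.e.\ all jumps of size $L/m$, which means end values $0$ and $L$. But the end-facet fidelity $(L/(2m))^3/12$ belongs to end values at the means, which would make the end jumps $3L/(4m)$. (Your own formula also gives $5\lambda L^3/768$, not $3\lambda L^3/768$, in $E_2$.) So your $\lambda_*$ is not the critical value.

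The paper instead uses $u_m$ from \eqref{EStm}. Its end facets have length $d/2$ and carry the values $g(0)=0$ and $g(L)=L$; this is the normalization $U=g$ at facet ends from the proof of Theorem~\ref{TSJG}, imposed as a Dirichlet condition in Section~\ref{SNE}. All jumps equal $d=L/m$, and each end facet has fidelity $\int_0^{d/2}x^2\,dx=d^3/24$. This gives $\frac1L TV_{Kg}(u_m)=f(L/m)$ with $f(d)=\frac{1}{1+d}+\frac{\lambda}{24}d^2$. In that normalization there is no zero-jump competitor. Nor can you add the free constant $L/2$ as $E_0$: it has exactly the same fidelity $\lambda L^3/24$ as $u_1$ and no jump cost, so it would always beat $u_1$.

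For the third step, the idea you are missing is convexity in the facet length $d=L/m$: $f''(d)=2(1+d)^{-3}+\lambda/12>0$. Choose $\lambda$ with $f(L)=f(L/2)$, i.e.\ $\lambda=32/\bigl(L(L+1)(L+2)\bigr)$ as in Proposition~\ref{PCal}. Strict convexity then gives $f(d)>f(L)=f(L/2)$ for every $d\notin[L/2,L]$. Hence $E(m)>E(1)=E(2)$ for all $m\ge3$ and every $L>0$, with no special choice of $L$ and no bound on the number of jumps needed.

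Your substitute is not justified. You propose ordering the crossings of the affine family $A_m+\lambda B_m$ by taking $L$ large, and you assert that $E_m$ for $m\ge4$ are ``larger still''. Concavity of the intercepts $A_m=mL/(m+L)$ alone does not make the crossing values $(A_{m+1}-A_m)/(B_m-B_{m+1})$ increase in $m$, because the denominators $B_m-B_{m+1}$ decrease as well. As written, step three is a plan rather than a proof.
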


It seems that the segmentation by $TV_{Kg}$ is much clearer compared with segmentation by Mumford-Shah functional or $TV_g$.
 However, this needs to be studied further especially multi-dimensional case.

To demonstrate the practical implications of our theoretical findings and highlight the unique properties of the KWC type energy, we perform numerical experiments in Section \ref{SNE}. 
Since directly computing the $L^2$-gradient flow for non-convex energies with jump sets is numerically challenging, we utilize its phase-field approximation and compare the results with the standard ROF model and the Ambrosio-Tortorelli approximation of the Mumford-Shah functional.
First, we numerically verify Theorem \ref{TSJ} and Corollary \ref{CSJ} for linear data, confirming the perfectly uniform jump sizes and the non-uniqueness of the global minimizers. 
Furthermore, we apply these models to non-monotone oscillating data and noisy piecewise constant signals. The results clearly illustrate that the KWC model robustly segments complex data into perfectly flat piecewise constant blocks without suffering from the severe staircasing effect inherent in the ROF model or the edge-blurring effect of the Mumford-Shah model. 
This visually and strongly confirms the powerful clustering capability of KWC-type energies as guaranteed by our theoretical results.

This paper is organized as follows.
 In Section~\ref{SA}, we discuss approximation of $TV_K$ by piecewise constant function.
 In Section~\ref{SB}, we give several basic estimates.
 In Section~\ref{SQ}, we prove a precise form (Lemma~\ref{LEBJ}) of Lemma~\ref{LWEBJ}.
 The proofs of Theorem~\ref{TMainMon}, Corollary~\ref{CMain}, Corollary~\ref{CMain2} and Theorem~\ref{TSJ} are given in this section.
 In the last section, we give a few numerical experiments to verify non-uniqueness of minimizers.
 We also compare results of segmentation by Mumford-Shah energy, $TV_g$ and $TV_{Kg}$.
%
\section{Approximation} \label{SA} 

We begin with a simple approximation for a general function by a piecewise constant function such that $TV_K$ is also approximated.
\begin{lemma} \label{LAp1}
Assume that $K$ satisfies (K1), (K3) and that $\Omega=(a,b)$.
 For any $u\in(L^p\cap BV)(\Omega)$ with $p\geq1$, there exists a sequence of piecewise constant functions $\{u_m\}$ (with finitely many jumps) such that $u_m\to u$ in $L^p(\Omega)$ and $TV_K(u_m)\to TV_K(u)$ as $m\to\infty$.
\end{lemma}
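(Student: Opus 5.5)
We take a good representative of $u$ on $(a,b)$ and split $Du = D^a u + D^j u + D^c u$. In one dimension $J_u$ is at most countable, with jumps $\rho_k = |u(x_k+0)-u(x_k-0)|$ and $\sum_k \rho_k \le |Du|(\Omega) < \infty$. Here $TV_K(u) = |Du|(\Omega\setminus J_u) + \sum_k K(\rho_k)$. Fix $\varepsilon>0$. By (K3), choose $\delta>0$ with $|K(\rho)-\rho|\le\varepsilon\rho$ for $\rho\le\delta$. Then choose $N$ so large that $\sum_{k>N}\rho_k<\varepsilon$ and every $\rho_k$ with $k>N$ is at most $\delta$. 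Keep the finitely many large jumps $x_1,\dots,x_N$ exactly; everything else (the diffuse part plus the small jumps) is to be approximated by many tiny jumps, whose $K$-cost is close to their size by (K3).

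\textbf{Construction.} Partition $(a,b)$ by a mesh $a=t_0<t_1<\dots<t_L=b$ that contains $x_1,\dots,x_N$ among its nodes and has mesh size $h\to0$. On each cell $(t_{i-1},t_i)$ set $u_m$ equal to a value of $u$ attained in the cell, say $u(s_i)$ for some $s_i\in(t_{i-1},t_i)$. Since $u$ is regulated, $u_m\to u$ pointwise a.e. as $h\to0$, and $|u_m|\le\|u\|_\infty$ (BV in 1D implies bounded). Dominated convergence then gives $u_m\to u$ in $L^p$. Alternatively we could use cell averages, which are easier to handle in $L^p$; sampled values make the jump estimate cleaner, so we use those.

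\textbf{Convergence of the energy.} The jumps of $u_m$ sit at the nodes $t_i$, with sizes $\sigma_i=|u(s_{i+1})-u(s_i)|\le|Du|([s_i,s_{i+1}])$.

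At a node $t_i=x_k$ with $k\le N$, the value $\sigma_i$ tends to $\rho_k$ as $h\to0$ (take $s_i,s_{i+1}$ close to $x_k$). Here we need only lower semicontinuity for the lower bound. For the upper bound we would like continuity of $K$ at $\rho_k$, which (K1) does not give. We handle this by choosing the samples adjacent to $x_k$ so that $\sigma_i\to\rho_k$ from below, and using monotonicity; if needed we instead take values on the side that makes the jump exactly $\rho_k$, namely $u(x_k\pm0)$ realized via limits. This is the delicate point. Then $\sum_{k\le N}K(\sigma)\to\sum_{k\le N}K(\rho_k)$.

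For the other nodes, the intervals $[s_i,s_{i+1}]$ avoiding $x_1,\dots,x_N$ are essentially disjoint. Hence $\sum\sigma_i\le|Du|(\Omega\setminus\{x_1,\dots,x_N\})\le |Du|(\Omega\setminus J_u)+\varepsilon$.

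To apply (K3) we need each such $\sigma_i\le\delta$. This holds for small $h$, because $|Du|$ has no atoms of mass $>\delta$ outside $\{x_k\}_{k\le N}$, so $\sup_i|Du|((t_{i-2},t_{i+1})\setminus\{x_1,\dots,x_N\})\le\delta$ for small $h$. This is a standard compactness argument for the modulus of a finite measure with small atoms. Then
\[
\sum K(\sigma_i)\le(1+\varepsilon)\bigl(|Du|(\Omega\setminus J_u)+\varepsilon\bigr),
\]
so
\[
\limsup TV_K(u_m)\le TV_K(u)+C\varepsilon.
\]

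\textbf{Lower bound.} For the reverse inequality we would use lower semicontinuity of $TV_K$ under $L^1$ convergence. In 1D this follows from (K1), (K3), and subadditivity ((K2) is not assumed, so we avoid it). The cleanest route is to argue directly: $TV_K(u_m)\ge\sum_{k\le N}K(\sigma_{(k)}) + (1-\varepsilon)\sum_{\text{others}}\sigma_i$. The last sum is at least $\mathrm{Var}(u_m;\cdot)$ away from the $x_k$, which tends to $|Du|(\Omega\setminus\{x_k\}_{k\le N})$ by sampling at points realizing the variation. For this we choose the mesh and sample points so that the sampled variation is within $\varepsilon$ of the true variation, using the definition of pointwise variation of a good representative. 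Hence
\[
\liminf TV_K(u_m)\ge TV_K(u)-C\varepsilon.
\]
A diagonal argument in $\varepsilon$ finishes the proof.

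The main obstacle is the simultaneous control at the $N$ large jumps, where $K$ is only lower semicontinuous, together with the uniform smallness $\sigma_i\le\delta$ elsewhere.
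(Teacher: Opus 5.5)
Your argument is correct, provided you use the second of your two options at the large jumps, and it takes a genuinely different route from the paper. The core idea is shared: keep finitely many large jumps exactly and replace everything else by tiny jumps. The paper then works on each interval $(a_j,a_{j+1})$ between large jumps. It first replaces $u$ by a continuous function with nearly the same total variation and the same one-sided endpoint values. It then applies Lemma~\ref{LApC}, which (after mollification and polynomial approximation) quantizes into the levels $k\eta$, so every new jump has size exactly $\eta$ and (K3) applies uniformly. You instead sample $u$ directly on a fine mesh whose nodes include the large jump points. You get uniform smallness of all other jumps from the fact that $\sup_{|I|\le 2h}|Du|(I\setminus\{x_1,\dots,x_N\})$ tends to the largest remaining atom as $h\to0$. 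Your route buys three things. It is more elementary: no intermediate continuous approximant with prescribed endpoint values, and no polynomial step. It tracks the small jumps of $u$ explicitly through the factors $1\pm\varepsilon$; the paper leaves this implicit, since its limit counts those jumps at full size $\rho$ rather than $K(\rho)$, an error $\sum_{J_u\setminus J_\delta}(\rho-K(\rho))$ that is small only by (K3). And sampling automatically preserves monotonicity and the bounds $\essinf u\le u_m\le\esssup u$, so Corollary~\ref{CApMon} and Remark~\ref{RTru} come for free, without the subadditivity and lower semicontinuity used there. The paper's route, in exchange, produces jumps of one common size and recycles Lemma~\ref{LApC}.

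There are three points to tighten. First, approaching $\rho_k$ from below cannot always be arranged. For $u(x)=x$ on $(-1,0]$ and $u(x)=1+x$ on $(0,1)$, every pair of samples $s<0<s'$ gives $u(s')-u(s)>1=\rho$. Moreover, $K$, being lower semicontinuous and non-decreasing, is left- but not necessarily right-continuous. So make your fallback the definition: give the two cells adjacent to $x_k$ the values $u(x_k-0)$ and $u(x_k+0)$, with $h$ small enough that no cell touches two large jumps. The jump at $x_k$ is then exactly $\rho_k$, and the neighbouring jumps are still bounded by $|Du|$ of half-open intervals excluding $x_k$.

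Second, choose $N$ so that the remaining atoms are at most $\delta/2$. The limit above equals the largest remaining atom, so you only get $\sigma_i\le\delta$ eventually if that atom is strictly below $\delta$. Also bound $\sum\sigma_i$ by the pointwise variation of a good representative, or sample at continuity points, so that an atom at a shared sample point is not counted twice.

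Third, drop the appeal to lower semicontinuity of $TV_K$. It needs subadditivity, which (K1) and (K3) do not give: for $K(\rho)=\rho+\rho^2$, a single jump costs strictly more than many nearby small jumps of the same total size. Your direct argument is the right one, and it is easier than you suggest. Since $u_m\to u$ in $L^1$, classical lower semicontinuity of the total variation gives $\liminf_m|Du_m|(\Omega)\ge|Du|(\Omega)$ for any choice of samples. Hence $\liminf\sum_{\text{others}}\sigma_i\ge|Du|(\Omega\setminus\{x_1,\dots,x_N\})$, with no need to select samples that realize the variation.
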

Since $u_m\to u$ in $L^2(\Omega)$ implies $\mathcal{F}(u_m)\to\mathcal{F}(u)$ when $g\in L^2(\Omega)$, this lemma yields
\begin{lemma} \label{LAp2}
Assume that $K$ satisfies (K1), (K3) and $\Omega=(a,b)$.
 For any $u\in(L^2\cap BV)(\Omega)$, there exists a sequence of piecewise constant functions $\{u_m\}$ such that $u_m\to u$ in $L^2(\Omega)$ and $TV_{Kg}(u_m)\to TV_{Kg}(u)$ as $m\to\infty$.
\end{lemma}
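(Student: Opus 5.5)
The plan is to reduce Lemma~\ref{LAp2} directly to Lemma~\ref{LAp1} with $p=2$. We then only have to show that the fidelity term passes to the limit along the same sequence. Since $u\in(L^2\cap BV)(\Omega)$, Lemma~\ref{LAp1} (which needs only (K1), (K3) and $\Omega=(a,b)$, exactly the hypotheses available here) gives piecewise constant functions $u_m$ with finitely many jumps such that
\[
u_m\to u \text{ in } L^2(\Omega), \qquad TV_K(u_m)\to TV_K(u).
\]
Because $TV_{Kg}=TV_K+\mathcal F$, it remains to prove $\mathcal F(u_m)\to\mathcal F(u)$ and then add the two limits.

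For the fidelity term, we use that $g\in L^2(\Omega)$, so $u_m-g$ and $u-g$ lie in $L^2(\Omega)$ and $u_m-g\to u-g$ in $L^2$. Continuity of the norm (the reverse triangle inequality $\bigl|\|u_m-g\|_{L^2}-\|u-g\|_{L^2}\bigr|\le\|u_m-u\|_{L^2}$) gives $\|u_m-g\|_{L^2}\to\|u-g\|_{L^2}$. Squaring and multiplying by $\lambda/2$ yields $\mathcal F(u_m)\to\mathcal F(u)$.

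Summing, we get $TV_{Kg}(u_m)\to TV_{Kg}(u)$, with $u_m\to u$ in $L^2$ as required. There is no real obstacle once Lemma~\ref{LAp1} is granted. The only point to check is that $u_m\in L^2$, which is automatic since the $u_m$ are bounded piecewise constant functions on a bounded interval; this makes $\mathcal F(u_m)$ finite. The substantive work, namely building piecewise constant approximants whose $K$-weighted jump energy converges to the diffuse part $\int|Du|$ via (K3), is contained in Lemma~\ref{LAp1}.
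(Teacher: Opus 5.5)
Your proposal is correct and matches the paper's argument: Lemma~\ref{LAp1} with $p=2$ gives the piecewise constant approximants with $TV_K(u_m)\to TV_K(u)$. The $L^2$ convergence $u_m\to u$ then yields $\mathcal{F}(u_m)\to\mathcal{F}(u)$ because $g\in L^2(\Omega)$.
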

We shall prove Lemma~\ref{LAp1} by reducing the problem when $u$ is continuous, i.e., $u\in C[a,b]$.
\begin{lemma} \label{LApC}
Assume that $K$ satisfies (K3) and $\Omega=(a,b)$.
 For any $u\in C[a,b]\cap BV(\Omega)$, there exists a sequence of piecewise constant functions $\{u_m\}$ with $u_m(a+0)=u(a)$, $u_m(b-0)=u(b)$ such that $u_m\to u$ in $C[a,b]$ and $TV_K(u_m)\to TV_K(u)$ as $m\to\infty$.
\end{lemma}
\begin{proof}
We first construct an approximate sequence $\{u_m\}$ without requiring that $u_m(a)=u(a)$, $u_m(b)=b$.
For a continuous function $u$, $TV_K(u)$ agrees with usual $TV(u)$.
 We extend $u$ continuously in some neighborhood of $\overline{\Omega}$ and denote it by $\overline{u}$.
 We mollify $\overline{u}$ by a symmetric mollifier $\rho_\varepsilon$.
 It is well known that $u_\varepsilon=\overline{u}*\rho_\varepsilon$ is $C^\infty$ in $[a,b]$ and $u_\varepsilon\to u$ in $C[a,b]$ as $\varepsilon\to0$.
 Moreover, $TV(u_\varepsilon)\to TV(u)$ \cite[Proposition 1.15]{Giu}.
 Since $\rho_\varepsilon$ can be approximated (in $C^1$ sense) by polynomials in a bounded set, we approximate $u_\varepsilon$ by a polynomial with its derivative in $C[a,b]$.
 Thus, we may assume that $u$ is a polynomial.

We divide the interval $(a,b)$ into finitely many subintervals $\left\{(a_i,a_{i+1})\right\}_{i=0}^\ell$ with $a=a_0<a_1< \cdots <a_\ell<a_{\ell+1}=b$ such that on each such an interval $u$ is either increasing or decreasing.
 This is possible since $u$ is a polynomial.
 For a given $\eta>0$, we define a function
\[
	u^\eta(x) = k\eta \quad\text{if}\quad
	k\eta \leq u(x) < (k+1)\eta
	\quad\text{for}\quad k \in \mathbb{Z}.
\]
There might be a chance that the set where $u^\eta=k\eta$ contains an isolated point for some $k$ if $u$ takes local maximum value $k\eta$.
 Since we consider essential total variation, we may assume that the set where $u^\eta=k\eta$ is a non-trivial interval and $u^\eta$ is piecewise constant by considering its lower semicontinuous envelope i.e., the greatest lower semicontinuous function less than $u^\eta$.
 This function $u^\eta$ is piecewise constant.
 Let $TV\left(u,(a_i,a_{i+1})\right)$ denote the total variation of $u$ in $(a_i,a_{i+1})$ for $i=0,\ldots,\ell$.
 Then
\[
	\left| TV\left(u,(a_i,a_{i+1})\right) - TV\left(u^\eta,(a_i,a_{i+1})\right) \right| \leq \eta.
\]
By the assumption (K3), we see that for any $\delta>0$ there is $\eta_0>0$ such that $\left|\eta-K(\eta)\right|<\delta\eta$ for $\eta<\eta_0$.
 Since $u$ is continuous, the size of jumps of $u^\eta$ is always $\eta$ so
\[
	\left\lvert TV \left(u^\eta,(a_i,a_{i+1})\right) - TV_K \left(u^\eta,(a_i,a_{i+1})\right) \right\rvert
	\leq \delta TV \left(u^\eta, (a_i,a_{i+1})\right)
	\quad\text{for}\quad \eta < \eta_0,
\]
where we use the same convention to $TV_K$.
 We thus observe that
\begin{gather*}
	\left\lvert TV (u) - TV_K (u^\eta) \right\rvert
	\leq \sum_{i=0}^\ell \left(\eta + \delta TV \left(u^\eta,(a_i,a_{i+1})\right) \right) \\
	= \eta (\ell+1) + \delta TV(u^\eta)
	\leq \eta (\ell+1) + \delta \left(TV(u)+\eta\right).
\end{gather*}
Sending $\eta\to0$, we now conclude that
\[
	\varlimsup_{\eta\to0} \left\lvert TV(u) - TV_K(u^\eta) \right\rvert
	\leq \delta TV(u).
\]
Since $\delta>0$ is arbitrary, the convergence $TV_K(u^\eta)\to TV(u)$ as $\eta\to0$ follows.
 By definition, $u^\eta\to u$ in $C[a,b]$.
 We now obtain a sequence of piecewise constant functions $\{u_m\}$ such that $u_m\to u$ in $C[a,b]$ and $TV_K(u_m)\to TV_K(u)$ as $m\to\infty$.

It remains to modify $u_m$ such that $u_m(a)=u(a)$ and $u_m(b)=u(b)$.
 By adding a constant, we may assume $u(a)=0$.
 We take a monotone piecewise constant approximation of a linear function
\[
	r_m(x) = \frac{u_m(b) -u(b)-u_m(a)}{b-a} (x-a)
	+ u_m(a)
\]
whose jump points agree with those of $u_m$.
 Its total variation tends to zero as $m\to\infty$ since $u_m\to u$ in $C[a,b]$.
 Comparing with $TV$ and $TV_K$ as above, we conclude that $\bar{u}_m=u_m-r_m(x)$ is the desired sequence satisfying $\bar{u}_m(a)=u(a)$, $\bar{u}_m(b)=u(b)$.
 The proof is now complete. 
\end{proof}
\begin{proof}[Proof of Lemma~\ref{LAp1}]

Since $u$ is bounded by $u\in BV(\Omega)$, for any $\delta>0$, the set $J_\delta$ of jump discontinuities of $u$ whose jump size greater than $\delta$ is a finite set.
 For any $\varepsilon>0$, we take $\delta>0$ such that
\[
	\sum_{x\in J_u\backslash J_\delta} K\left(|u^+-u^-|(x)\right) < \varepsilon.
\]
This is possible since $K$ is continuous at $0$ and $K(0)=0$ by (K1).
 We may assume that $J_\delta=\{a_j\}_{j=1}^\ell$ with $a_j<a_{j+1}$ and $a_0=a$, $a_{\ell+1}=b$.
 In each interval $(a_j,a_{j+1})$, we approximate $u$ in $L^p$ with continuous function $u_\varepsilon$ such that
 \[
    TV\left( u_\varepsilon,(a_j,a_{j+1}) \right)
    \to TV\left( u,(a_j,a_{j+1}) \right)
    \quad\text{as}\quad \varepsilon \to 0
 \]
keeping $u_\varepsilon(a_j)=u(a_j+0)$, $u_\varepsilon(a_{j+1})=u(a_{j+1}-0)$.
 We now apply Lemma~\ref{LApC} on each interval $(a_j,a_{j+1})$ to approximate $u_\varepsilon$ by a piecewise constant function $(u_\varepsilon)_m$.
 The jump at $a_j$ of $(u_\varepsilon)_m$ is exactly equal to
\[
	|u^+ - u^-| (a_j)
\]
since $(u_\varepsilon)_m^\pm (a_j)= u^\pm(a_j)$.
 We now obtain a desired sequence of piecewise constant functions.
\end{proof}
If $u$ is non-decreasing, it is rather clear that $u_m$ in Lemmas~\ref{LAp1}, \ref{LAp2}, \ref{LApC} can be taken as a non-decreasing function by construction.
\begin{cor} \label{CApMon}
Assume that $K$ satisfies (K1), (K3) and that $\Omega=(a,b)$.
 Assume that $u$ is non-decreasing (resp.\ non-increasing) and bounded in $(a,b)$.
 Then there is a sequence of non-decreasing (non-increasing) piecewise constant functions $\{u_m\}$ (with finitely many jumps) such that $u_m\to u$ in $L^p(\Omega)$ for any $p\ge1$ and $TV_K(u_m)\to TV_K(u)$ as $m\to\infty$.
\end{cor}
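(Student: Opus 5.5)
The plan is to treat the non-decreasing case; the non-increasing case follows by replacing $u$ with $-u$, since $TV_K(-u)=TV_K(u)$. A bounded non-decreasing $u$ on $(a,b)$ is in $BV(a,b)$, and the same holds for every monotone step function built below. The strategy is to follow the proof of Lemma~\ref{LAp1}, but to replace each approximating object there by a monotone one. The argument has three stages: large jumps, the continuous part on each subinterval, and the remaining small jumps.

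\emph{Large jumps.} Since $u$ is monotone, its jump set is countable and $\sum_{x\in J_u}|u^+-u^-|(x)\le u(b-0)-u(a+0)<\infty$. Fix $\varepsilon>0$. By (K1) and (K3) we have $K(\rho)\le 2\rho$ for small $\rho$, so we can choose $\delta>0$ such that
\[
\sum_{x\in J_u\setminus J_\delta}K\left(|u^+-u^-|(x)\right)<\varepsilon
\quad\text{and}\quad
\sum_{x\in J_u\setminus J_\delta}|u^+-u^-|(x)<\varepsilon .
\]
Here $J_\delta=\{a_1<\dots<a_\ell\}$ is finite; set $a_0=a$ and $a_{\ell+1}=b$.

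\emph{Each subinterval.} On each $(a_j,a_{j+1})$ write $u=u^c+u^s$, where $u^s$ is the non-decreasing pure jump function collecting the small jumps and $u^c$ is continuous and non-decreasing. The key observation is that we should not mollify, because a polynomial approximation may destroy monotonicity. Instead we apply the level-set discretization of Lemma~\ref{LApC} directly to the continuous monotone function $u^c$, defining $(u^c)^\eta(x)=k\eta$ when $k\eta\le u^c(x)<(k+1)\eta$, with a suitable choice at the endpoints. This step function is automatically non-decreasing, and all its jumps equal $\eta$. On a single monotone interval (so $\ell=0$ in the notation of Lemma~\ref{LApC}) this gives
\[
TV\left((u^c)^\eta\right)=TV(u^c)+O(\eta),
\qquad
TV_K\left((u^c)^\eta\right)=\frac{K(\eta)}{\eta}\,TV\left((u^c)^\eta\right)\to TV(u^c),
\]
using (K3). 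The discretization converges uniformly to $u^c$. To match the endpoint values $u(a_j+0)$ and $u(a_{j+1}-0)$, we shift the levels so that $u^c(a_j+0)$ lies on the lattice, and we add at most one extra small jump of size below $\eta$ at the right end. Both changes preserve monotonicity and cost $o(1)$ in $TV_K$.

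\emph{Small jumps and assembly.} On each subinterval, the small jumps are discarded; the $u^c$ part already carries the correct endpoint values once the total small-jump mass is absorbed as above. The resulting error is at most $\varepsilon$ in $L^\infty$, hence in every $L^p$, and it changes $TV_K$ by less than $2\varepsilon$ in total, using the first inequality of the choice of $\delta$ and the fact that the continuous part contributes exactly $\int|Du^c|$. Gluing the pieces across the large jumps $a_j$ keeps the exact jump values $u^\pm(a_j)$, so the glued function is non-decreasing. Its $TV_K$ is within $O(\varepsilon)+o_\eta(1)$ of $TV_K(u)$. Finally, $L^p$ convergence for all $p$ follows from uniform boundedness together with $L^1$ convergence. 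A diagonal choice of $\varepsilon$ and $\eta$ yields the sequence $\{u_m\}$.

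The main obstacle is the endpoint bookkeeping on each subinterval: making the monotone step approximation hit $u(a_j+0)$ and $u(a_{j+1}-0)$ exactly while keeping monotonicity. The correction $r_m$ used in Lemma~\ref{LApC} is subtracted and could break monotonicity. My remedy is to choose the lattice offset on each subinterval and to allow one boundary jump of size less than $\eta$, whose $K$-cost is at most $2\eta$.
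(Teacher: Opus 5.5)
Your proof is correct, but it is organized differently from the paper's. The paper gives no separate argument for this corollary. It only asserts that the construction of Lemma~\ref{LAp1} yields non-decreasing $u_m$ ``by construction''. That construction is a continuous approximation on each $(a_j,a_{j+1})$ followed by Lemma~\ref{LApC}: mollification, polynomial approximation, level-set discretization, and subtraction of the step corrector $r_m$. You instead split $u=u^c+u^s$ on each subinterval, discretize the continuous monotone part $u^c$ directly, discard the small jumps, and restore the exact endpoint values with one extra jump per subinterval.

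Your route has three advantages. Monotonicity is evident at every step. It bypasses the two places where monotonicity is not automatic in the paper's route: the polynomial approximation, which a monotone function does not need, and the subtraction of $r_m$, which requires checking that the jumps of $r_m$ do not exceed $\eta$. It even gives uniform convergence rather than merely $L^p$ convergence. The paper's route buys brevity by reusing the general lemmas verbatim.

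One piece of your bookkeeping needs correcting. Let $s_j$ be the total size of the small jumps in $(a_j,a_{j+1})$. Since $u^c(a_{j+1}-0)=u(a_{j+1}-0)-s_j$, the correction jump has size less than $\eta+s_j$, not less than $\eta$. Its cost is therefore at most $2(\eta+s_j)$, once $\eta+\varepsilon$ lies in the range where $K(\rho)\le 2\rho$. Summing over $j$ and using the \emph{second} inequality in your choice of $\delta$, the total cost of these corrections is at most $2(\ell+1)\eta+2\varepsilon$. This is harmless because $\ell$ is fixed while $\eta\to0$. The $TV_K$ error becomes $3\varepsilon+o_\eta(1)$ and the uniform error becomes $\varepsilon+\eta$, so the conclusion stands.

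You are also right to reproduce the large jumps at $a_j$ exactly. A non-decreasing lower semicontinuous $K$ is left-continuous but need not be right-continuous, so approximating those jumps by slightly larger ones could fail to recover $K(\rho_j)$.
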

\begin{remark}[truncation] \label{RTru}
In Lemma~\ref{LAp1} (and also in Lemma~\ref{LAp2}, Lemma~\ref{LApC} and Corollary~\ref{CApMon}), if we further assume that $K$ is subadditive, then $u_m$ can be taken so that
\[
	q_- \le u_m \le q_+
	\quad\text{with}\quad q_- = \essinf_\Omega u,\quad 
	q_+ = \esssup_\Omega u,
\]
where the above first estimate holds on $\Omega$ almost everywhere.
 Indeed, we truncate $u_m$ as
\[
	\bar{\bar{u}}_m = \min(\bar{u}_m,q_+),\quad
	\bar{u}_m = \max(u_m, q_-).
\]
Since $|\bar{\bar{u}}_m-u|\le|u_m-u|$ almost everywhere on $\Omega$, the convergence $
\bar{\bar{u}}_m\to u$ in $L^p(\Omega)$ is clear.
 It remains to prove $TV_K(\bar{\bar{u}}_m)\to TV_K(u)$.
 By monotonicity of $K$ in (K1), we observe that
\[
	TV_K(\bar{\bar{u}}_m) \le TV_K(u_m).
\]
Since $TV_K(u_m)$ is bounded and $\bar{\bar{u}}_m$ is bounded, we may assume that $\bar{\bar{u}}_m$ weakly$^*$ converges to $u$ in $BV(\Omega)$ \cite[Proposition 2.3]{GKKOS}.
 By lower semicontinuity \cite[Proposition 2.4]{GKKOS}, we see that
\[
	TV_K(u) \le \varliminf_{m\to\infty} TV_K(\bar{\bar{u}}_m);
\]
here we invoke the assumption of subadditibity of $K$.
Since $TV_K(\bar{\bar{u}}_m)\le TV_K(u_m)$, this now implies that $TV_K(\bar{\bar{u}}_m)\to TV_K(u)$ as $m\to\infty$.
\end{remark}
\begin{remark}[at the boundary] \label{RApMon}
In Corollary~\ref{CApMon}, if we further assume that $K$ is subadditive, and $u$ is continuous at $a$ and $b$, the function $u_m$ can be taken so that $u_m(a+0)=u(a)$, $u_m(b-0)=u(b)$.
 To show this statement, we may assume that $u(a)<u(b)$.
 By Remark~\ref{RTru}, we may assume that $u(a)\le u_m\le u(b)$ on $\Omega$.
 We shall modify $u_m$ near $x=a$ and $b$ so that $u_m(a+0)=u_m(a)$, $u_m(b-0)=u(b)$.
 Since the argument is symmetric, we only discuss modification near $x=a$.
 We may assume that $u(a)=0$ by adding a constant.
 Since $u_m\to u$ in $L^p(\Omega)$, we may assume that $u_m\to u$ in $\Omega$ almost everywhere, by taking a subsequence.
 Let $a_m$ be the first jump of $u_m$; i.e., $u_m$ is a constant on $(a,a_m)$ and $u_m(a_m+0)>u_m(a_m-0)$($=u_m(a+0)$).

Since $u$ is continuous at $a$ with $u(a)=0$, $u_m(a+0)\to0$ as $m\to\infty$.
 Indeed, if not there would exist $\delta>0$ such that $u_m(a+0)>\delta$.
 By monotonicity of $u_m$ this would imply that  $\lim_{m\to\infty}u_m:=u\ge\delta$ almost everywhere.
 This contradicts the continuity of $u$ at $a$ with $u(a)=0$.

We modify $u_m$ as
\[
	\bar{u}_m(x) =\left\{
\begin{array}{ll}
	u(a)\text{(}=0\text{)}, & 0 \le x < \min(a_m/2,1/m) =: \alpha_m \\
	u_m(x), & \alpha_m \le x < b.
\end{array}
\right.
\]
Since $\alpha_m\to0$, $\|\bar{u}_m-u_m \|_{L^p}\to0$ as $m\to\infty$.
 Since $u_m(a+0)\to0$,
\[
	TV_K(\bar{u}_m) - TV_K(u_m) = K\left(u_m(a+0)\right)\to0
\]
as $m\to\infty$.
 Thus $TV_K(\bar{u}_m)\to TV_K(u)$ as well as $\bar{u}_m\to u$ in $L^p(\Omega)$.
\end{remark}

We next observe that the way of convergence of jumps becomes very simple if $K$ fulfills (K2).
 For a bounded non-decreasing function $u$ in $\Omega$, we take its left continuous representative.
 Let $J_u$ denote the set of jump discontinuities of $u$ which is either a finite set or a countable set.
 It is of the form $J_u=\{z_j\}_{j=1}^\infty$.
 Let $I_\lambda(\rho)$ denote an interval $(\lambda,\lambda+\rho]$.
 If we set $\lambda_j=u(z_j)=u(z_j-0)$, then
\[
    \left(u(z_j), u(z_j+0) \right]=I_{\lambda_j}(\rho_j)
    \quad\text{with}\quad
    \rho_j = u(z_j+0) - u(z_j-0).
\]
We call $I_{\lambda_j}(\rho_j)$ a jump interval of $u$ (at $z_j$).
 It is convenient to consider ``inverse function" of $u$.
 We define an inverse function by
\[
    u^{-1}(p) = \inf \left\{ x \in \Omega \mid
    u(x)> p \right\}
\]
which is left continuous.
 By definition, $u^{-1}$ is constant on a jump interval $I_{\lambda_j}(\rho_j)$.
 Let $u_m$ be a non-decreasing piecewise constant function on $(a,b)$, which is assumed to be left continuous.
 Let $\{z_i^m\}_{i=1}^{n_m}$ denote its jump discontinuities.
 We may assume that $z_i^m<z_{i+1}^m$ for $i=1,\ldots,n_m-1$.
 We set
\[
    \lambda_i^m = u_m(z_i^m), \quad
    \rho_i^m = u_m(z_i^m+0) - u_m(z_i^m)
\]
so that
\[
    \left(u_m(z_i^m), u_m(z_i^m+0) \right]
    = I_{\lambda_i^m} (\rho_i^m).
\]
Since $u_m$ is piecewise constant, we observe that
\[
    \sum_{i=1}^{n_m} \rho_i^m
    = u_m(b-0) - u_m(a+0).
\]
\begin{thm} \label{TCJ}
Assume the same hypotheses of Corollary~\ref{CApMon} concerning $K$, $\Omega$, $u$ and $u_m$.
 Assume further (K2).
 Then for each jump interval $I_{\lambda_j}(\rho_j)$, there exists a subsequence $\{i_m^j\}_{m=1}^\infty\subset\mathbb{N}$ such that $\left\{I_{\lambda_{i_m^j}^m}(\rho_{i_m^j}^m)\right\}_{m=1}^\infty$ converges to $I_{\lambda_j}(\rho_j)$ as $m\to\infty$.
 In other words,
\[
    \lambda_{i_m^j}^m \to \lambda_j, \quad
    \rho_{i_m^j}^m \to \rho_j
    \quad\text{as}\quad m \to \infty.
\]
Moreover, if a subsequence $\{i_m\}$ is taken such that
\[
    \lambda_{i_m}^m \to \lambda, \quad
    \rho_{i_m}^m \to \rho
    \quad\text{as}\quad m \to \infty
\]
with some $\lambda$ and $\rho$ and $\lambda\neq\lambda_j$ for any $j$, then $\rho=0$.
 The convergence $\rho_{i_m}^m\to0$ is uniform, i.e., 
\[
    \lim_{m\to\infty} \max \left\{\; \rho_\ell^m \bigm|
    \ell\neq i_m^j\ \text{for all}\ j\; \right\} = 0.
\] 
\end{thm}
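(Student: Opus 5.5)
The plan is to localize the convergence $TV_K(u_m)\to TV_K(u)$ to subintervals, and then use the quantitative strict subadditivity (K2) to show that the jumps of $u_m$ near a jump $z_j$ of $u$ cannot split into several pieces.

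\textbf{Step 1 (localization).} Since $u_m,u$ are non-decreasing and $u_m\to u$ in $L^p$, a subsequence converges at every continuity point of $u$. By the lower semicontinuity of $TV_K$ on open sets \cite[Proposition 2.4]{GKKOS}, which uses subadditivity and hence (K2), we have
\[
TV_K(u;I)\le\varliminf_{m\to\infty} TV_K(u_m;I)\quad\text{and}\quad TV_K(u;\Omega\setminus\overline I)\le\varliminf_{m\to\infty} TV_K(u_m;\Omega\setminus\overline I)
\]
for any open $I\subset\Omega$. If $\partial I\cap\Omega$ contains no jump point of $u$, then $TV_K(u;\partial I\cap\Omega)=0$. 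Combining the two inequalities with the total convergence $TV_K(u_m)\to TV_K(u)$ gives
\[
TV_K(u_m;I)\to TV_K(u;I).
\]
Since every term of $TV_K(u_m)$ is a nonnegative jump contribution, the same argument shows $TV_K(u_m;\{x\})\to0$ for each continuity point $x$ of $u$. We apply this with $I=(a,a+\delta)$ and $I=(b-\delta,b)$ as well, which will rule out jumps of $u_m$ escaping to the boundary with positive size.

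\textbf{Step 2 (each jump of $u$ is captured by a single jump of $u_m$).} Fix $z_j$ and $\varepsilon>0$. Choose $\delta>0$ such that $x_\pm=z_j\pm\delta$ are continuity points of $u$, and such that $u$ has oscillation at most $\rho_j+\varepsilon$ on $I=(x_-,x_+)$. Then:
\begin{itemize}
\item $TV_K(u;I)\le K(\rho_j)+\varepsilon$, using (K3w) with $c_M\le1$ near $0$, or simply the continuity of $K$ at $0$ together with monotonicity of $K$;
\item $u_m(x_\pm)\to u(x_\pm)$, so the jumps $\rho^m_i$ of $u_m$ inside $I$ have sum $S_m$ satisfying $\rho_j-\varepsilon\le S_m\le\rho_j+2\varepsilon$ for large $m$.
\end{itemize}
Iterating (K2) in the form $K(s)+K(\rho)\ge K(s+\rho)+C_M s\rho$ yields
\[
\sum_i K(\rho_i^m)\ \ge\ K(S_m)+C_M\sum_{i<k}\rho_i^m\rho_k^m ,
\]
and hence
\[
C_M\cdot\tfrac12\Bigl(S_m^2-\sum_i(\rho_i^m)^2\Bigr)\ \le\ K(\rho_j)+2\varepsilon-K(\rho_j-\varepsilon).
\]
I expect this to be the main obstacle: the right-hand side need not be small, because $K$ is only lower semicontinuous and may jump at $\rho_j-\varepsilon$. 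I would handle this by letting $\varepsilon\to0$ along a diagonal sequence, using $\varliminf_m K(S_m)\ge K(\rho_j)$, which follows from lower semicontinuity of $K$ and $\varliminf S_m\ge\rho_j$ (obtained by shrinking $\delta$). This gives
\[
\sum_i(\rho^m_i)^2\ \ge\ S_m^2-o(1),
\]
so the largest jump $\rho^m_{i^j_m}$ in $I$ satisfies $\rho^m_{i^j_m}\ge S_m-o(1)/S_m\to\rho_j$, while the others have total size $o(1)$. Since $u_m(x_-)\to u(x_-)$ and $u(x_-)\to\lambda_j$ as $\delta\to0$, and the remaining jumps in $(x_-,z^m_{i^j_m})$ are $o(1)$, we also get $\lambda^m_{i^j_m}\to\lambda_j$.

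\textbf{Step 3 (no other limit jumps; uniformity).} Suppose $\rho^m_{i_m}\ge\eta>0$ along a subsequence with $\lambda^m_{i_m}\to\lambda\ne\lambda_j$ for all $j$ (respectively, $i_m$ avoids all the $i^j_m$). Pass to a further subsequence with $z^m_{i_m}\to z\in[a,b]$.
\begin{itemize}
\item If $z\in(a,b)$ is a continuity point of $u$, take a small interval around $z$ with continuity endpoints on which the oscillation of $u$ is less than $\eta/2$. Pointwise convergence at the endpoints then forces the oscillation of $u_m$ there to be less than $\eta$, a contradiction.
\item If $z=z_j$ is a jump point, Step 2 shows that all jumps in a neighbourhood other than $i^j_m$ have total size $o(1)$, again a contradiction.
\item If $z\in\{a,b\}$, Step 1 applied to $(a,a+\delta)$ gives $TV_K(u_m;(a,a+\delta))\to TV_K(u;(a,a+\delta))$, which is small for small $\delta$. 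This contradicts $K(\rho^m_{i_m})\ge c_M\eta$ from (K3w).
\end{itemize}
Hence $\rho=0$. For the uniform statement, argue by contradiction in the same way, passing to subsequences of the maximizing indices. Only finitely many $j$ have $\rho_j\ge\eta/2$, and jumps of $u$ smaller than $\eta/2$ are handled by the oscillation argument of the first case.
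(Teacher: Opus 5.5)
\paragraph{Verdict.} Your proof is correct, but it is organized differently from the paper's.

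\paragraph{Comparison with the paper.} The paper works in the range variable. It uses $u_m^{-1}\to u^{-1}$ in $L^1$ to isolate the block of jumps of $u_m$ whose values sweep $(\lambda_j,\lambda_j+\rho_j]$, and then splits into two cases. In the first, some jump in that block converges to a nondegenerate interval $(p,q]$, and (K2) applied to the two pieces $\rho_*$ and $\rho_j-\rho_*$ forces $\rho_*=\rho_j$. In the second, all jumps in the block degenerate uniformly, and Proposition~\ref{PInd} produces an impossible excess $C_M\rho_j^2/2$. The last two assertions are then read off from the $L^1$ convergence of $u_m^{-1}$: a nondegenerate limit interval must sit where $u^{-1}$ is flat.

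You stay in the spatial variable instead. You apply Proposition~\ref{PInd} to all jumps in $I$ at once and use $\sum_i(\rho_i^m)^2\le(\max_i\rho_i^m)\,S_m$ to show that a single jump carries all but $o(1)$ of $S_m$. This merges the paper's two cases into one estimate, and you get $\lambda^m_{i^j_m}\to\lambda_j$ from pointwise convergence at continuity points.

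Your approach also gains in two places:
\begin{itemize}
\item Your Step 1 actually proves the localization $TV_K(u_m;I)\to TV_K(u;I)$, which the paper only asserts for $Z_j^\delta$. That claim genuinely needs the endpoints to be continuity points of $u$.
\item Your Step 3 explicitly excludes large jumps escaping to $a$ or $b$. The paper's argument, restricted to $G\subset(u(a+0),u(b-0))$, covers this only implicitly.
\end{itemize}
The paper's formulation, in turn, works directly with the intervals $I_\lambda(\rho)$ on the value axis, which is the form consumed afterwards by Lemma~\ref{LCIn}.

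\paragraph{Minor corrections.}
\begin{itemize}
\item The bound $TV_K(u;I)\le K(\rho_j)+\varepsilon$ does not follow from (K3w), which is a lower bound on $K$. Nor does it follow from continuity of $K$ at $0$ alone, since many small jumps could accumulate. Use instead that $TV_K(u;\cdot)$ is a finite measure, so $TV_K(u;I\setminus\{z_j\})\to0$ as $\delta\downarrow0$. Alternatively, note that subadditivity and (K3) give $K(\rho)\le\rho$.
\item For fixed $\delta$ you already have $S_m\to u(x_+)-u(x_-)\ge\rho_j$. Hence $\varliminf_m K(S_m)\ge K(\rho_j)$ follows from lower semicontinuity and monotonicity without shrinking $\delta$. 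The diagonal argument is needed only to choose $i^j_m$ consistently.
\item For monotone functions, $L^1$ convergence gives convergence at every continuity point of $u$ along the full sequence, so no subsequence is needed in Step 1.
\end{itemize}
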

\begin{proof}
Since $u_m\to u$ in $L^1$, $u_m^{-1}\to u^{-1}$ in $L^1(G)$ as $m\to\infty$ for any closed interval $G \subset\left(u(a+0),u(b-0)\right)$.
 By definition, $u^{-1}=z_j$ in $I_{\lambda_j}(\rho_j)$ and $u^{-1}(p)<z_j$ for $p<\lambda_j$ and $u^{-1}(p)>z_j$ for $p>\lambda_j+\rho_j$.
 The $L^1$-convergence implies that there is a subsequence $\{i_m^j\}$, $\{\bar{i}_m^j\}$ with $i_m^j\le \bar{i}_m^j$ such that
\[
    p_m = u_m(z_{i_m^j}^m) \to \lambda_j \quad
    q_m := u_m(z_{\bar{i}_m^j}^m + 0) \to \lambda_j + \rho_j
\]
as $m\to\infty$.

We consider $TV_K$ near $z_j$.
 For $\varepsilon>0$, we take $\delta>0$ small such that
\begin{equation} \label{EATVK}
    TV_K(u,Z_j^\delta) \le K(\rho_j) + \varepsilon, \quad
    Z_j^\delta = (z_j-\delta, z_j+\delta).
\end{equation}
By definition, on the interval
\[
    (p'_m, q'_m]
    = \left( u_m(z_{i'_m}^m), u_m(z_{i'_m}^m + 0) \right],
\]
$u_m^{-1}$ equals a constant $z_{i'_m}^m$.
 Assume that $i_m^j\le i'_m\le \bar{i}_m^j$.
 Since $u_m^{-1}$ converges to $u^{-1}$ in $L^1$,
 we may assume that $p'_m$ and $q'_m$ converge to some $p$, $q$ as $m\to\infty$ with
\[
    \lambda_j \le p, \quad
    q \le \lambda_j + \rho_j;
\]
see Figure~\ref{FAp}.
\begin{figure}[tb]
\centering 
\includegraphics[keepaspectratio, scale=0.3]{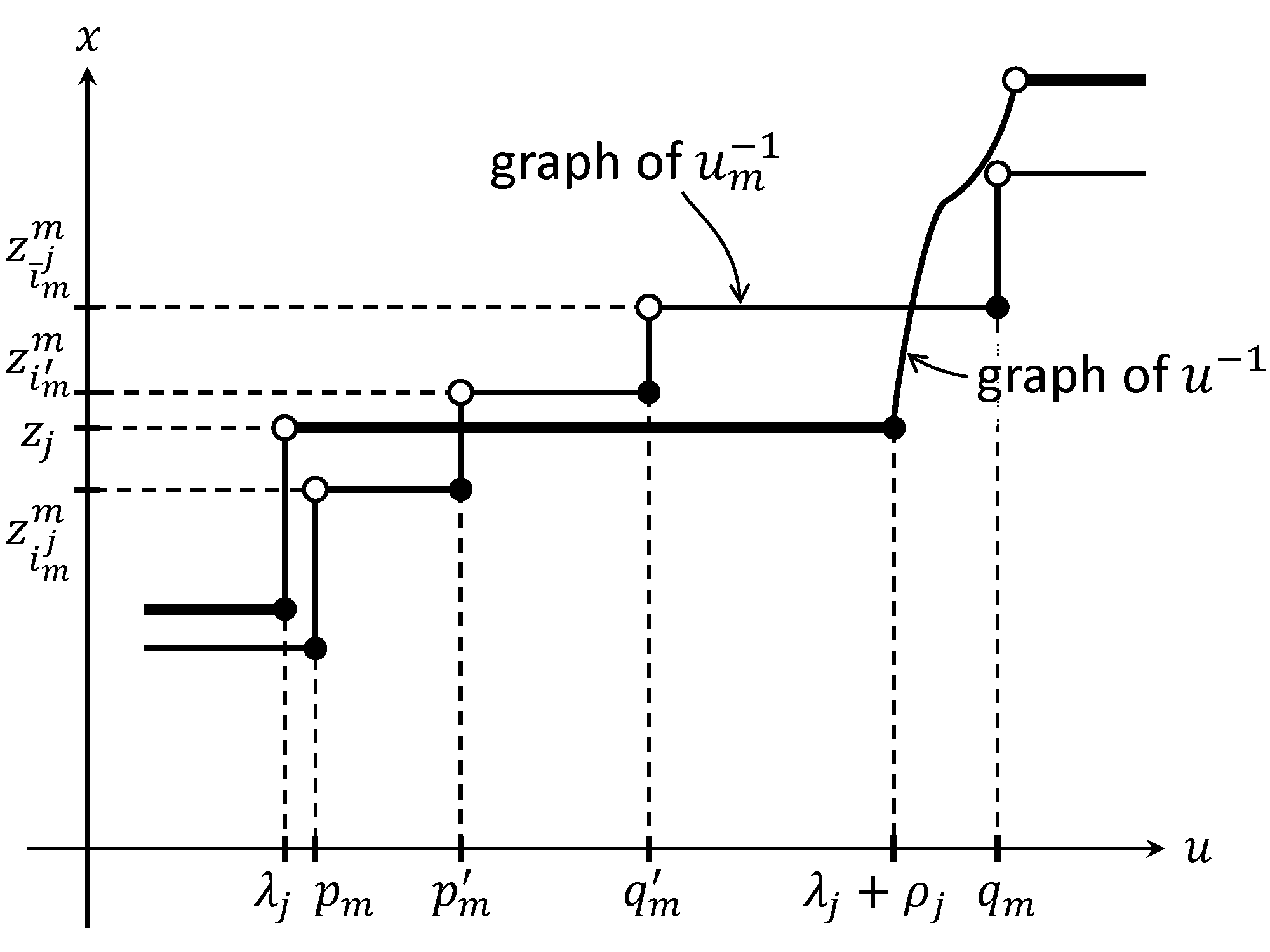}
\caption{the graph of $u^{-1}$ and $u_m^{-1}$\label{FAp}}
\end{figure}

\noindent
\emph{Case} 1 ($p<q$).
 We shall prove that $\lambda_j=p$, $\rho_j=q-p$.
 We set $q_m-p_m=:\rho^m$, $q'_m-p'_m=:\rho'_m$.
 By definition of $Z_j^\delta$, for sufficiently large $m$ we see that
\[
    TV_K(u_m,Z_j^\delta) \ge K(\rho'_m)
    + K(\rho^m -\rho'_m).
\]
Since $TV_K(u_m)\to TV_K(u)$ as $m\to\infty$ implies
\[
    \lim_{m\to\infty} TV_K (u_m,Z_j^\delta)
    = TV_K (u,Z_j^\delta),
\]
by (K1), sending $m\to\infty$ yields
\[
    TV_K(u,Z_j^\delta)
    = \varliminf_{m\to\infty} TV_K(u_m,Z_j^\delta)
    \ge K((\rho_*) + K(\rho_j-\rho_*)
\]
where $\rho_*=\lim_{m\to\infty}\rho'_m$.
 By our assumption $\rho_*=q-p>0$ and (K2) we see that
\[
    K(\rho_*) + K(\rho_j-\rho_*)
    \ge K(\rho_j) + C_M \rho_*(\rho_j-\rho_*)
\]
where $M=\rho_j$.
 Thus,
\[
    TV_K(u,Z_j^\delta) \ge K(\rho_j)
    + C_M \rho_*(\rho_j-\rho_*).
\]
By \eqref{EATVK}, we conclude that $\rho_j-\rho_*=0$.
 Thus $\lambda_j=p$, $\rho_j=q-p$.

\noindent
\emph{Case} 2 ($p=q$).
 Assume that for all subsequences $i'_m$, $q'_m-p'_m\to0$.
 We estimate
\[
    TV_K(u_m, Z_j^\delta)
    = \sum_{\ell=0}^{k_m} K(\rho_\ell^m)
\]
with $\rho_\ell^m=q'_{m_\ell}-p'_{m_\ell}$,
\[
    p'_{m_\ell} = u_m(z_{i_m^j+\ell}^m), \quad
    q'_{m_\ell} = u_m(z_{i_m^j+\ell}^m+0), \quad
    i_m + k_m = \bar{i}_m^j, \quad
    \ell=0,\ldots,k_m.
\]
By our assumption of case 2, $\rho_\ell^m\to0$ as $m\to\infty$.
 Moreover, the convergence is uniform, i.e.,
\begin{equation} \label{EUni}
    \lim_{m\to\infty} \max_{0\le\ell\le k_m} \rho_\ell^m = 0.
\end{equation}
Indeed, if otherwise, there were an interval $(p'_{m_\ell},q'_{m_\ell})$ which subsequently converges to an interval with positive length as $m\to\infty$.
 By $L^1$ convergence of $u_m^{-1}$, this convergence is a full convergence which is reduced to Case 1.
 This would contradict $p=q$ so \eqref{EUni} is proved.

By induction (see Proposition~\ref{PInd} below), we observe that
\[
    \sum_{\ell=0}^{k_m} K(\rho_\ell^m)
    \ge K \left(\sum_{\ell=0}^{k_m} \rho_\ell^m \right)
    + C_M \sum_{0\le i<\ell\le k_m} \rho_i^m \rho_\ell^m, \quad
    M \ge 2\sum_{\ell=0}^{k_m} \rho_\ell^m.
\]
Since
\[
    \sum_{\ell=0}^{k_m} \rho_\ell^m
    = q_m - p_m =: s_m
\]
and $s_m\to\rho_j$, we see that
\[
    \sum_{0\le i<\ell\le k_m} \rho_i^m \rho_\ell^m
    = \frac12 \left( s_m^2 - \sum_{\ell=0}^{k_m} (\rho_\ell^m)^2 \right) \to \frac12 \rho_j^2
    \quad\text{as}\quad m \to \infty
\]
since
\[
    \sum_{\ell=0}^{k_m} (\rho_\ell^m)^2
    \le \max_{1\le\ell\le k_m} \rho_\ell^m \cdot s_m \to 0
\]
by \eqref{EUni}.
 Thus
\[
    TV_K(u,Z_j^\delta) \ge K(\rho_j) + \frac{C_M \rho_j^2}{2}.
\]
Again, this contradicts \eqref{EATVK} by taking $\varepsilon$ smaller than $C_M\rho_j^2/2$.
 We thus conclude that the Case 2 does not occur.

We now conclude the existence of a sequence $\left\{I_{\lambda_{i_m^j}^m}(\rho_{i_m^j}^m)\right\}$ converging to $I_{\lambda_j}(\rho_j)$ as $m\to\infty$.

It remains to prove the last statement.
 If such a sequence exists, by $L^1$-convergence of $u_m^{-1}$ the limit must be one of $I_{\lambda_j}(\rho_j)$ if the size does not converge to zero.
 By \eqref{EUni}, the proof of Theorem~\ref{TCJ} is now complete.
\end{proof}
\begin{prop} \label{PInd}
Assume that $K$ satisfies (K2).
 Let $M>0$.
 If $\rho_i\ge0$ ($0\le i \le k$) satisfies $\sum_{i=0}^k\rho_i\le M$, then
\begin{equation} \label{EInd}
    \sum_{i=0}^k K(\rho_i)
    \ge K \left(\sum_{i=0}^k \rho_i \right)
    + C_M \sum_{0\le i<\ell\le k} \rho_i \rho_\ell
\end{equation}
for $k\ge1$.
\end{prop}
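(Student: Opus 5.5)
We argue by induction on $k\ge1$. The base case $k=1$ is exactly (K2): for $\rho_0,\rho_1\ge0$ with $\rho_0+\rho_1\le M$ we have $K(\rho_0)+K(\rho_1)\ge K(\rho_0+\rho_1)+C_M\rho_0\rho_1$, which is \eqref{EInd} for $k=1$.

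For the inductive step, assume \eqref{EInd} holds for $k-1\ge1$, and let $\rho_0,\dots,\rho_k\ge0$ with $\sum_{i=0}^k\rho_i\le M$. Set $S=\sum_{i=0}^{k-1}\rho_i$. Since $S\le M$, the induction hypothesis applies with the same constant $C_M$ and gives
\[
	\sum_{i=0}^{k-1}K(\rho_i)\ge K(S)+C_M\sum_{0\le i<\ell\le k-1}\rho_i\rho_\ell .
\]
Adding $K(\rho_k)$ to both sides, we next use (K2) on the pair $(S,\rho_k)$, which is admissible because $S+\rho_k=\sum_{i=0}^k\rho_i\le M$:
\[
	K(S)+K(\rho_k)\ge K(S+\rho_k)+C_M S\rho_k .
\]
Since $S\rho_k=\sum_{i=0}^{k-1}\rho_i\rho_k$, these cross terms together with the previous ones form exactly $\sum_{0\le i<\ell\le k}\rho_i\rho_\ell$. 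This yields \eqref{EInd} for $k$.

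The only point needing care is that the constant $C_M$ must not change along the induction. This holds because every partial sum is bounded by the same $M$: all the $\rho_i$ are nonnegative, so each partial sum is at most the full sum, which is at most $M$. Hence (K2) and the induction hypothesis are always invoked with the same $M$ and the same $C_M$. Note also that \eqref{EInd} only requires $\sum\rho_i\le M$, so it applies in particular under the stronger condition $M\ge2\sum\rho_\ell^m$ used in the proof of Theorem~\ref{TCJ}.
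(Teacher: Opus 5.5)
Your proof is correct and follows the paper's own argument: induction on $k$ with base case (K2), where you lump the first terms into one partial sum, apply the induction hypothesis, and then apply (K2) to that partial sum paired with the last term. Your check that every partial sum stays below $M$, so that the same $C_M$ works at every step, is a detail the paper leaves implicit.
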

\begin{proof}
The case $k=1$ is nothing but (K2).
 Assume that \eqref{EInd} holds for a fixed $k$ ($\ge1$).
 Then
\begin{align*}
    \sum_{i=0}^{k+1} K(\rho_i)
    &\ge K\left(\sum_{i=0}^k \rho_i \right) + K(\rho_{k+1})
    + C_M \sum_{0\le i<\ell \le k} \rho_i \rho_\ell
    \quad\text{(by induction)} \\
    &\ge K\left(\sum_{i=0}^{k+1} \rho_i\right)
    + C_M \left(\sum_{i=0}^k \rho_i\right) \rho_{k+1}
    + C_M \sum_{0\le i<\ell\le k} \rho_i \rho_\ell
    \quad\text{(by (K2))} \\
    &= K\left(\sum_{i=0}^{k+1} \rho_i \right)
    + C_M \sum_{0\le i<\ell\le k+1} \rho_i \rho_\ell.
\end{align*}
The proof is now complete.
\end{proof}
%
\section{Basic estimates} \label{SB} 

We shall estimate an increase of fidelity $\mathcal{F}$ from above.
 We begin with a case of two-valued functions.
 For $\gamma\in(\alpha,\beta)$, we set
\[
U_0^\gamma (x) := \left\{
\begin{aligned}
	g(\alpha), &\quad x \in [\alpha, \gamma) \\	
	g(\beta), &\quad x \in [\gamma, \beta].
\end{aligned}
\right.
\]
The fidelity of $U_0^\gamma$ on $(\alpha,\beta)$ is denoted by $\lambda F(\gamma)/2$, i.e.,
\[
	F(\gamma) := \int_\alpha^\beta |U_0^\gamma - g|^2\, dx.
\]
\begin{prop} \label{PConMo}
Assume that $g\in C[\alpha,\beta]$ is non-decreasing and that $U_0^\gamma(\alpha)=g(\alpha)$, $U_0^\gamma(\beta)=g(\beta)$ with $\rho=U_0^\gamma(\beta) -U_0^\gamma(\alpha)>0$.
 If $\gamma\in[\alpha,\beta]$ satisfies $F(\alpha+0)\geq F(\gamma)$, then $g(x)\le \left(g(\alpha)+g(\beta)\right)/2$ for $x\in[\alpha,\gamma]$, which yields
\[
	\int_\alpha^\gamma g(x)\, dx
	\le \frac{g(\alpha)+g(\beta)}{2} (\gamma-\alpha).
\]
\end{prop}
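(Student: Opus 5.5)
Write $c=(g(\alpha)+g(\beta))/2$. Since $F(\alpha+0)=\int_\alpha^\beta (g(\beta)-g)^2\,dx$, the plan is to express $F(\alpha+0)-F(\gamma)$ as a single integral over $(\alpha,\gamma)$ and then use the monotonicity of $g$.

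First compute the difference. For $x\in[\gamma,\beta]$ the integrands of $F(\alpha+0)$ and $F(\gamma)$ coincide, since both equal $(g(\beta)-g)^2$. On $[\alpha,\gamma)$ the integrands are $(g(\beta)-g)^2$ and $(g(\alpha)-g)^2$ respectively. Expanding gives
\[
	(g(\beta)-g)^2-(g(\alpha)-g)^2=(g(\beta)-g(\alpha))\bigl(g(\beta)+g(\alpha)-2g\bigr)=2\rho\,(c-g).
\]
Hence
\[
	F(\alpha+0)-F(\gamma)=2\rho\int_\alpha^\gamma \bigl(c-g(x)\bigr)\,dx .
\]

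Next, the hypothesis $F(\alpha+0)\ge F(\gamma)$ together with $\rho>0$ gives $\int_\alpha^\gamma (c-g)\,dx\ge0$. This is exactly the integral inequality in the statement, so that part is immediate.

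It remains to prove the pointwise bound $g\le c$ on $[\alpha,\gamma]$. Because $g$ is non-decreasing and continuous, it suffices to show $g(\gamma)\le c$. Suppose instead that $g(\gamma)>c$. Then $c-g$ is non-increasing and becomes negative near $\gamma$, so positivity of the integral is not directly contradicted. This is the main obstacle, and I expect the intended reading of the hypothesis to be needed here.

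The first thing I would try is to read the hypothesis as $\gamma$ being the minimizer of $F$, or as $F(\alpha+0)\ge F(s)$ holding for all $s\in[\alpha,\gamma]$. This is how the proposition should be used later, with $\gamma$ an optimal jump location. Under that reading, $F$ is differentiable with
\[
	F'(s)=2\rho\,\bigl(g(s)-c\bigr).
\]
At an interior minimizer $\gamma$ this gives $g(\gamma)=c$, and monotonicity then yields $g\le c$ on $[\alpha,\gamma]$.

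If only the single inequality $F(\alpha+0)\ge F(\gamma)$ is available, I would fall back on the integral inequality, which is what the subsequent estimates actually use. For the pointwise statement I would choose $\gamma$ as the first minimizer of $F$, or replace $\gamma$ by $\sup\{s\le\gamma : g(s)\le c\}$. In the latter case, the quantity $\int_\alpha^{s}(c-g)\,dx$ first increases and then decreases, so it remains nonnegative up to that point.
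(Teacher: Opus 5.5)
Your identity $F(\alpha+0)-F(\gamma)=2\rho\int_\alpha^\gamma (c-g)\,dx$ is correct, and it proves the integral inequality outright. In fact it shows that this inequality is \emph{equivalent} to the hypothesis $F(\alpha+0)\ge F(\gamma)$. This is a different route from the paper's. The paper only says the proposition follows from Proposition~\ref{PMF}, i.e.\ from the sign of $F'(s)=2\rho\,(g(s)-c)$, and it obtains the integral bound as a consequence of the pointwise one. Your route is the more robust one, because it never passes through the pointwise claim.

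You were right to stall on the pointwise claim: under the hypothesis as stated it is false. No argument can close it, so you should give a counterexample rather than hedge. Take $g(x)=x$ on $[0,1]$, so $\rho=1$ and $c=1/2$. Then $F(\gamma)=\bigl(\gamma^3+(1-\gamma)^3\bigr)/3=\bigl(1-3\gamma(1-\gamma)\bigr)/3\le 1/3=F(\alpha+0)$ for every $\gamma\in[0,1]$. For $\gamma=9/10$ the hypothesis holds, but $g(9/10)>c$. As your identity predicts, $F(\alpha+0)-F(9/10)=27/300=2\,(0.45-0.405)$.

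The same example disposes of your alternative reading ``$F(\alpha+0)\ge F(s)$ for all $s\in[\alpha,\gamma]$'', since here it holds for every $\gamma$. Replacing $\gamma$ by $\sup\{s\le\gamma : g(s)\le c\}$ does not help either; it simply changes the statement.

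The correct characterization is this. Since $F'$ is non-decreasing, $F$ is convex, and the pointwise bound holds exactly when $F'(\gamma)\le 0$, i.e.\ when $F(\gamma)\le F(s)$ for all $s\in[\alpha,\gamma]$. This covers your minimizer reading ($g(\gamma)=c$), and that argument is fine. It is precisely what the paper's appeal to Proposition~\ref{PMF} actually establishes, and it is the case needed for $U_0$ in Lemma~\ref{LFid}.
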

This easily follows from the next observation.
\begin{prop} \label{PMF}
Assume that $g\in C[\alpha,\beta]$ is non-decreasing.
 Then $F(\gamma)$ is minimized if and only if $g(\gamma)=\left(g(\alpha)+g(\beta)\right)/2$.
\end{prop}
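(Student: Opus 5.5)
The plan is to compute $F(\gamma)$ explicitly, differentiate it in $\gamma$, and read off both the critical points and the monotonicity of $F$. Write $c:=\left(g(\alpha)+g(\beta)\right)/2$. By definition,
\[
	F(\gamma) = \int_\alpha^\gamma \left(g(x)-g(\alpha)\right)^2 dx
	+ \int_\gamma^\beta \left(g(\beta)-g(x)\right)^2 dx .
\]
Since $g$ is continuous, $F\in C^1[\alpha,\beta]$ by the fundamental theorem of calculus, and
\[
	F'(\gamma) = \left(g(\gamma)-g(\alpha)\right)^2 - \left(g(\beta)-g(\gamma)\right)^2
	= \left(g(\beta)-g(\alpha)\right)\left(2g(\gamma) - g(\alpha) - g(\beta)\right)
	= 2\rho \left(g(\gamma) - c\right).
\]
Here I use the difference of squares, where the sum of the two bases equals $g(\beta)-g(\alpha)=\rho$.

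Because $\rho>0$ and $g$ is non-decreasing, $F'$ is non-decreasing. More precisely, $F'<0$ on $\{g<c\}$, $F'=0$ on $\{g=c\}$, and $F'>0$ on $\{g>c\}$. By continuity and monotonicity of $g$, and since $g(\alpha)<c<g(\beta)$, the set $Z:=\{\gamma: g(\gamma)=c\}$ is a nonempty closed interval $[\gamma_1,\gamma_2]$ in the interior of $(\alpha,\beta)$; nonemptiness comes from the intermediate value theorem. It follows that $F$ is strictly decreasing on $[\alpha,\gamma_1]$, constant on $Z$, and strictly increasing on $[\gamma_2,\beta]$. Hence the set of minimizers of $F$ is exactly $Z$, which proves the ``if and only if''.

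The only point needing care is strictness off $Z$. For instance, if $\gamma<\gamma_1$ then $F'<0$ on the nondegenerate interval $[\gamma,\gamma_1)$, so $F(\gamma)>F(\gamma_1)$, and the case $\gamma>\gamma_2$ is symmetric. I expect no real obstacle here.

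For Proposition~\ref{PConMo} the same picture does the job. If $F(\alpha)\ge F(\gamma)$, then $\gamma$ cannot lie in $(\gamma_2,\beta]$ beyond the point where $F$ climbs back to $F(\alpha)$. I would rather argue directly: $F(\alpha)\ge F(\gamma)$ together with the shape of $F$ forces $\gamma\le\gamma_2$. Then $g\le c$ on $[\alpha,\gamma]$ by monotonicity, and integrating gives the stated bound.
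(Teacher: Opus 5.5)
Your argument is correct and is essentially the paper's own: compute $F'(\gamma)=\bigl(2g(\gamma)-g(\alpha)-g(\beta)\bigr)\bigl(g(\beta)-g(\alpha)\bigr)$ and read off its sign. Your treatment of the level set $Z=[\gamma_1,\gamma_2]$ is in fact more careful than the paper's phrase ``only minimum'' when $g$ has a plateau at height $c$; the case $\rho=0$, which you exclude without comment, is trivial because then $g$ is constant and $F\equiv0$.

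Your closing aside on Proposition~\ref{PConMo} lies outside this statement, but it is wrong: since $F(\gamma_2)<F(\alpha)$ and $F$ is continuous, $F(\gamma)<F(\alpha)$ still holds for $\gamma$ slightly beyond $\gamma_2$, where $g(\gamma)>c$. For instance, $g(x)=x$ on $[-1,1]$ with $\gamma=1/2$ gives $F(1/2)=7/6<8/3=F(-1)$, so that hypothesis does not force $\gamma\le\gamma_2$.
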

\begin{proof}
A direct calculation shows that
\begin{align*}
	F(\gamma) &= \int_\alpha^\beta |U_0^\gamma-g|^2 \; dx \\
	&= \int_\alpha^\gamma \left|g(x)-g(\alpha)\right|^2 dx
	+ \int_\gamma^\beta \left|g(\beta)-g(x)\right|^2 dx.
\end{align*}
Then
\begin{align*}
	F'(\gamma) &= \left|g(\gamma) - g(\alpha)\right|^2
	- \left|g(\beta) - g(\gamma)\right|^2 \\
	&= \left(2g(\gamma) - g(\alpha) - g(\beta)\right)
	\left(g(\beta) - g(\alpha)\right).
\end{align*}
Thus, $F$ takes its only minimum at $\gamma$ satisfying
\[
	g(\gamma) = \frac{g(\alpha)+g(\beta)}{2}.
\]
\end{proof}

We give a few explicit estimate of $TV_{Kg}$ for a monotone function $g$ to give a proof for Theorem~\ref{TMainMon}.

We recall $U_0^\gamma$ defined on $[\alpha,\beta]$.
 We shall simply denote $U_0^\gamma$ by $U_0$ if $g(\gamma)=\left(g(\alpha)+g(\beta)\right)/2$.
 Note that such $\gamma$ always exists since $g$ is continuous.
 It is unique if $g$ is increasing.
 In general, it is not unique and we choose one of such $\gamma$.
 We next consider a non-decreasing piecewise constant function with two jumps.
 We set $\rho=g(\beta)-g(\alpha)$ and $\delta\in(0,1)$ and define
\[
	U_\delta(x) = \left\{
\begin{array}{ll}
	g(\alpha), & x \in [\alpha, x_1) \\
	g(\alpha)+\delta\rho, & x \in [x_1, x_2) \\
	g(\beta), & x \in [x_2, \beta)
\end{array}
\right.
\]
for $x_1<x_2$ with $x_1,x_2\in(\alpha,\beta)$.
  By Proposition~\ref{PMF}, $\mathcal{F}(U_\delta)$ is minimized by taking $x_1=x_1^*$, $x_2=x_2^*$, where $x_1^*$ and $x_2^*$ are defined by
\[
	g(x_1^*) = g(\alpha) + \delta\rho/2, \quad
	g(x_2^*) = \left( g(\beta) + g(\alpha) + \delta\rho\right)/2 
	= g(\beta) - (1-\delta)\rho/2, 
\]
see Figure~\ref{FUD}.
\begin{figure}[tb]
\centering 
\includegraphics[keepaspectratio, scale=0.25]{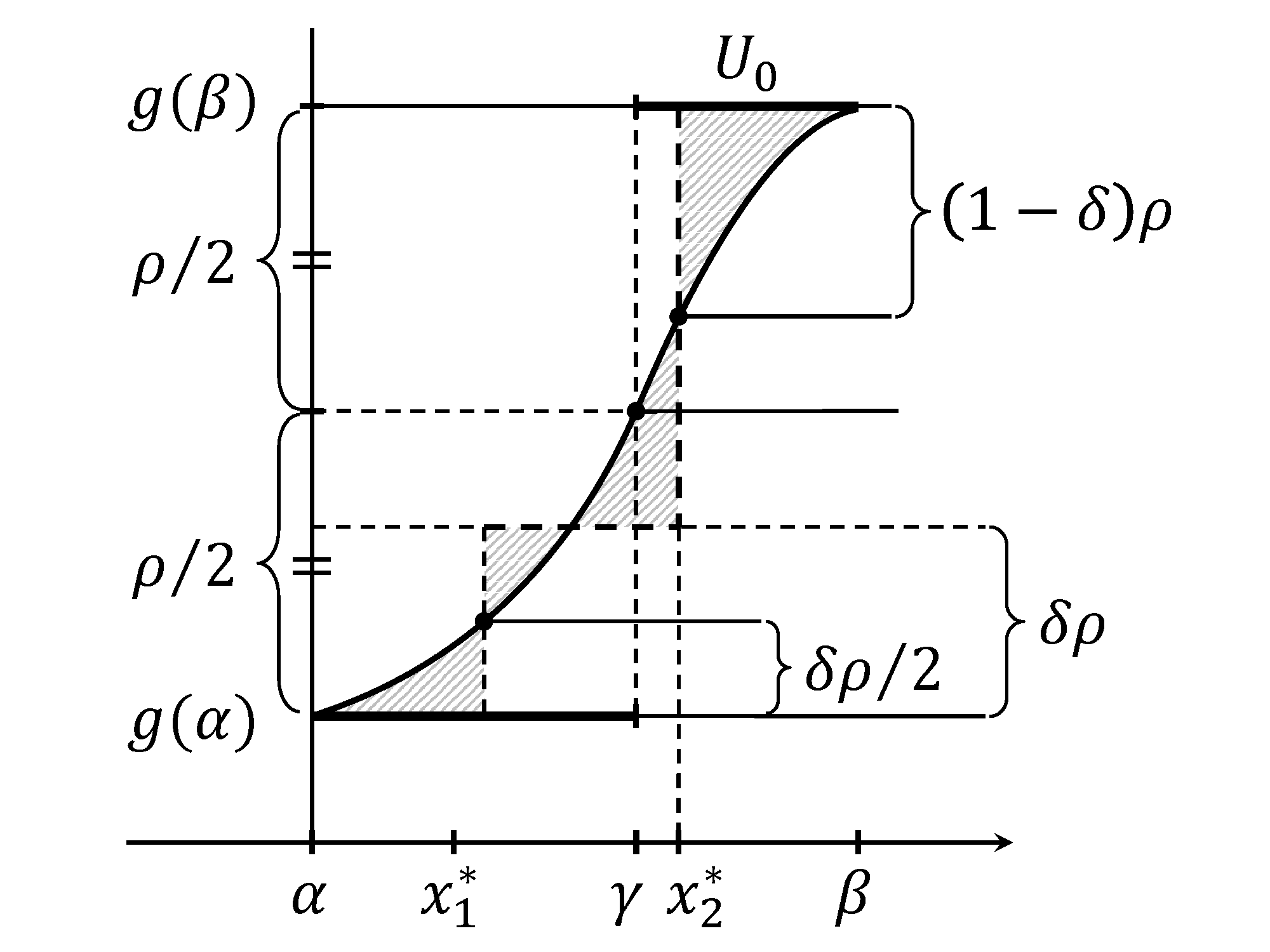}
\caption{$U_0$ and $U_\delta$\label{FUD}} 
\end{figure}
 Again $x_1^*$, $x_2^*$ may not be unique.
 We make an arbitrary choice of each.

We shall estimate
\[
	TV_{Kg}(U_\delta) - TV_{Kg}(U_0).
\]
This is done in \cite[Lemma 4.7]{GKKOS} since \cite[(4.1)]{GKKOS} is automatically fulfilled by Proposition~\ref{PConMo}.
 However, since the proof is clearer than that of general $g$ and a constant is improved, we give a full proof.
\begin{lemma} \label{LFid}
Assume that $g\in C[\alpha,\beta]$ is non-decreasing.
 Let $v$ be a non-decreasing piecewise constant function with three values $g(\alpha)$, $g(\alpha)+\delta\rho$, $g(\beta)$ for $\delta\in(0,1)$, where $\rho=g(\beta)-g(\alpha)$.
 Then
\[
	\int_\alpha^\beta (U_0-g)^2\; dx - \int_\alpha^\beta(v-g)^2 \;d x
	\leq \delta(1-\delta)\rho^2 (\beta-\alpha).
\]
\end{lemma}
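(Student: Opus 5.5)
The plan is to prove the inequality \emph{pointwise} in $x$ and then integrate over $[\alpha,\beta]$. The point is that $U_0$ always takes whichever of the two values $g(\alpha),g(\beta)$ is closest to $g(x)$. So the only place where $v$ can beat $U_0$ is where $v$ takes the middle value $g(\alpha)+\delta\rho$, and there the gain is bounded by an elementary one-variable computation.

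\emph{Step 1: $U_0$ is the pointwise nearest two-valued choice.} Write $A=g(\alpha)$, $B=g(\beta)$, $\rho=B-A$, $m=(A+B)/2$, and let $\gamma$ be the point used to define $U_0$, so $g(\gamma)=m$. Since $g$ is non-decreasing, $A\le g(x)\le m$ on $[\alpha,\gamma)$ and $m\le g(x)\le B$ on $[\gamma,\beta]$; this is the observation behind Proposition~\ref{PConMo}. Hence
\[
	(U_0(x)-g(x))^2=\min\bigl\{(g(x)-A)^2,(g(x)-B)^2\bigr\}\quad\text{for all } x\in[\alpha,\beta].
\]

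\emph{Step 2: pointwise comparison with $v$.} At points where $v(x)\in\{A,B\}$, Step~1 gives directly $(U_0-g)^2-(v-g)^2\le 0$. It remains to treat points where $v(x)=c:=A+\delta\rho$. Put $t=(g(x)-A)/\rho\in[0,1]$; this uses only monotonicity of $g$, not the positions of the jumps of $v$. After rescaling by $\rho^2$, the quantity to bound is
\[
	h(t)=\min\{t^2,(1-t)^2\}-(t-\delta)^2 .
\]
For $t\le 1/2$ we have $h(t)=2\delta t-\delta^2$, which is increasing in $t$. For $t\ge 1/2$ we have $h(t)=(1+\delta-2t)(1-\delta)$, which is decreasing in $t$. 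So the maximum is attained at $t=1/2$, where $h(1/2)=\delta-\delta^2=\delta(1-\delta)$. Therefore
\[
	(U_0-g)^2-(v-g)^2\le\delta(1-\delta)\rho^2\quad\text{on }\{v=c\}.
\]

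\emph{Step 3: integrate.} Combining Steps~1 and~2, the integrand $(U_0-g)^2-(v-g)^2$ is $\le 0$ off the set $\{v=c\}$ and $\le\delta(1-\delta)\rho^2$ on it. Integrating over $[\alpha,\beta]$ and using that $\{v=c\}$ has measure at most $\beta-\alpha$ gives the claimed bound.

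The only real obstacle is Step~1, namely justifying that $U_0$ realizes the pointwise minimum of the two distances. This is exactly where continuity and monotonicity of $g$, together with the choice $g(\gamma)=m$, are used. The argument does not depend on where $v$ jumps, so it holds for any three-valued $v$ of the stated form, including the optimal choice $x_1^*,x_2^*$.
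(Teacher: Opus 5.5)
Your proof is correct, and it is organized differently from the paper's.

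The paper first replaces $v$ by the optimally placed three-step function $U_\delta$, with jumps at $x_1^*,x_2^*$ where $g(x_1^*)=A+\delta\rho/2$ and $g(x_2^*)=B-(1-\delta)\rho/2$. It justifies this by $\mathcal{F}(v)\ge\mathcal{F}(U_\delta)$, which it obtains by invoking Proposition~\ref{PMF}. Since $U_\delta=U_0$ outside $[x_1^*,x_2^*]$, it then splits the remaining integral at $\gamma$ and factors $(U_0-g)^2-(U_\delta-g)^2=(U_0-U_\delta)(U_0+U_\delta-2g)$. Bounding $g$ by $g(\gamma)$ on each piece gives $\delta(1-\delta)\rho^2(x_2^*-x_1^*)$.

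The two integrands the paper meets, $2\delta\rho\,(g-g(x_1^*))$ and $2(1-\delta)\rho\,(g(x_2^*)-g)$, are exactly $\rho^2h(t)$ on your branches $t\le 1/2$ and $t\ge 1/2$. Its use of $g\le g(\gamma)$, resp.\ $g\ge g(\gamma)$, is your maximization at $t=1/2$. So the core estimate is the same.

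What your pointwise formulation buys is that the reduction step disappears. You never need the optimal jump positions, and the bound holds for every measurable $v$ with values in $\{A,c,B\}$, monotone or not. Your argument is also sharper than you state: $h(t)\le 0$ unless $\delta/2<t<(1+\delta)/2$, which (by monotonicity of $g$) forces $x\in(x_1^*,x_2^*)$. So you actually get the factor $|\{v=c\}\cap(x_1^*,x_2^*)|$, which is at most both your $|\{v=c\}|$ and the paper's $x_2^*-x_1^*$.

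The paper's route, for its part, produces the explicit competitor $U_\delta$ that Lemma~\ref{LND} uses anyway, and a bound that depends only on $g$ and $\delta$, not on $v$.
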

\begin{proof}
Since $\mathcal{F}(v)\ge\mathcal{F}(U_\delta)$, it suffices to prove the inequality for $v=U_\delta$ by choosing $x_1=x_1^*$, $x_2=x_2^*$.
 Since $U_\delta=U_0$ for $x\in[\alpha,x_1^*]$ or $x\in[x_2^*,\beta]$, we observe that
\begin{align*}
	&\int_\alpha^\beta (U_0-g)^2\, dx 
	- \int_\alpha^\beta (U_\delta-g)^2\, dx
	=\int_{x_1^*}^{x_2^*} (U_0 + U_\delta - 2g) (U_0-U_\delta)\, dx \\
	&=2 \int_{x_1^*}^\gamma \left( g - \frac{U_0+U_\delta}{2} \right) (U_\delta-U_0)\, dx
	+ 2 \int_\gamma^{x_2^*} \left( \frac{U_0+U_\delta}{2} - g \right) (U_0-U_\delta)\, dx, \\
	&= I + I\!I.
\end{align*}
Since $U_\delta-U_0=\delta\rho$ and $(U_0+U_\delta)/2=g(x_1^*)$ on $(x_1^*,\gamma)$, we see that
\[
	I = 2\delta\rho \int_{x_1^*}^\gamma \left( g(x) - g(x_1^*) \right) dx
	\le 2\delta\rho \left( g(\gamma) - g(x_1^*) \right) (\gamma-x_1^*).
\]
Since
\[
	g(\gamma) - g(x_1^*) 
	= \frac{g(\alpha) + g(\beta)}{2} - \left( g(\alpha) + \frac\delta2 \rho \right)
	=\frac{1-\delta}{2} \rho,
\]
we now conclude that
\[
	I \le 2 \delta\rho \frac{1-\delta}{2} \rho (\gamma-x_1^*).
\]
Similarly, we estimate
\begin{align*}
	I\!I &= 2 \int_\gamma^{x_2^*} \left( g(x_2^*)-g(x)\right) dx
	\le 2(1-\delta)\rho \left( g(x_2^*)-g(\gamma)\right) (x_2^*-\gamma) \\
	&= 2(1-\delta) \rho \frac\delta2 \rho (x_2^*-\gamma).
\end{align*}
We thus obtain
\begin{align*}
	I + I\!I &\le \delta(1-\delta)\rho^2 (\gamma-x_1)
	+ \delta(1-\delta) \rho^2 (x_2-\gamma) \\
	&= \delta(1-\delta)\rho^2 (x_2 - x_1)
	\le \delta(1-\delta) \rho^2 (\beta-\alpha),
\end{align*}
which is the desired inequality.
\end{proof}

We are now in a position to estimate $TV_{Kg}(U_\delta)-TV_{Kg}(U_0)$.
 We set
\begin{multline*}
	X_\delta = \Big\{\, v \bigm|
	v\ \text{is a non-decreasing piecewise constant function} \\
	\text{with three facets and}\ 
	v(\alpha)=g(\alpha),\ v(\beta)=g(\beta). \ \text{Moreover,} \\
	\text{the value on the middle facet equals}\ 
	g(\alpha)+\delta\rho\, \Big\}.
\end{multline*}
\begin{lemma} \label{LND}
Assume that $K$ satisfies (K1) and (K2).
 Assume that $g\in C[\alpha,\beta]$ is non-decreasing.
 If $g(\beta)-g(\alpha)\leq M$ and $C_*:=C_M-(\beta-\alpha)\lambda/2>0$, then
\[
	\inf_{v\in X_\delta} TV_{Kg}(v) \geq TV_{Kg}(U_0)
	+ C_* \delta(1-\delta)\rho^2
\]
for $\delta\in(0,1)$, where $\rho=g(\beta)-g(\alpha)$.
\end{lemma}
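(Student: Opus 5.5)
Fix $v\in X_\delta$ and split $TV_{Kg}(v)-TV_{Kg}(U_0)$, computed on $(\alpha,\beta)$, into a jump part and a fidelity part. We bound the jump part from below using (K2), bound the fidelity loss using Lemma~\ref{LFid}, and then check that the jump gain beats the fidelity loss.

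For the jump part, note that $v$ is piecewise constant with three facets. It has exactly two jumps, of sizes $\delta\rho$ and $(1-\delta)\rho$, and no continuous part. Hence
\[
	TV_K(v) = K(\delta\rho) + K\bigl((1-\delta)\rho\bigr).
\]
Since $\delta\rho+(1-\delta)\rho=\rho\le M$, condition (K2) gives
\[
	TV_K(v) \ge K(\rho) + C_M\delta(1-\delta)\rho^2 = TV_K(U_0) + C_M\delta(1-\delta)\rho^2,
\]
where we used that $U_0$ has the single jump $\rho$.

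For the fidelity part, Lemma~\ref{LFid} applies directly, because $v$ takes the three values $g(\alpha)$, $g(\alpha)+\delta\rho$, $g(\beta)$. It gives
\[
	\mathcal{F}(v)-\mathcal{F}(U_0) \ge -\frac\lambda2\,\delta(1-\delta)\rho^2(\beta-\alpha).
\]

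Adding the two estimates yields
\[
	TV_{Kg}(v) \ge TV_{Kg}(U_0) + \Bigl(C_M-\frac{\lambda(\beta-\alpha)}{2}\Bigr)\delta(1-\delta)\rho^2 = TV_{Kg}(U_0) + C_*\,\delta(1-\delta)\rho^2 .
\]
The right-hand side does not depend on $v$, so taking the infimum over $v\in X_\delta$ gives the claim.

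The only subtle point is making sure $TV_K(v)$ is exactly the sum of the two jump costs. This holds because the facets are nontrivial intervals inside $(\alpha,\beta)$ and the boundary values match $g(\alpha)$ and $g(\beta)$, so no extra jump appears at the endpoints and the comparison with $U_0$ is consistent. Beyond that the argument is just the combination of (K2) with Lemma~\ref{LFid}.
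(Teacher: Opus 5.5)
Your proof is correct and follows the paper's argument: you apply (K2) to the two jumps $\delta\rho$ and $(1-\delta)\rho$, and you use Lemma~\ref{LFid} to control the loss in fidelity. The only cosmetic difference is that the paper first reduces to the optimally placed $U_\delta$ (with $x_1=x_1^*$, $x_2=x_2^*$) before invoking Lemma~\ref{LFid}, whereas you apply Lemma~\ref{LFid} directly to an arbitrary $v\in X_\delta$; this is legitimate, since that lemma is stated for any such $v$ and performs the same reduction internally.
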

\begin{proof}
Since the value $TV_K(v)$ is the same for all $v\in X_\delta$, we observe that
\[
	\inf_{v\in X_\delta} TV_{Kg}(v) \geq TV_{Kg}(U_\delta)
\]
with $x_1=x_1^*$, $x_2=x_2^*$.
 For $\rho=g(\beta)-g(\alpha)$, we set $\rho_1=\delta\rho$, $\rho_2=(1-\delta)\rho$ so that $\rho_1+\rho_2=\rho$.
 We observe that
\[
	TV_K(U_\delta) = K(\rho_1) + K(\rho_2), \quad
	TV_K(U_0) = K(\rho).
\]
By (K2), we observe that
\[
	TV_K(U_\delta) \geq TV_K(U_0) + C_M \rho_1 \rho_2
\]
for $\rho\leq M$.
 By Lemma~\ref{LFid}, we conclude that
\begin{align*}
	TV_{Kg}(U_\delta) &= TV_K(U_\delta) + \mathcal{F}(U_\delta) \\
	&\geq TV_K(U_0) + C_M \rho_1 \rho_2 + \mathcal{F}(U_0) 
	- \frac{\lambda}{2} \delta(1 - \delta)\rho^2 (\beta - \alpha) \\
	&= TV_{Kg}(U_0) + \delta(1 - \delta)\rho^2 \left(C_M - (\beta - \alpha) \lambda/2 \right).
\end{align*}
Thus
\[
	TV_{Kg}(U_\delta) - TV_{Kg}(U_0) \geq C_* \delta(1-\delta)\rho^2
\]
provided that $(\beta-\alpha)\lambda/2<C_M$, i.e., $C_*>0$.
The proof is now complete.
\end{proof}
%
\section{Quantitative estimates} \label{SQ} 

We shall estimate $TV_{Kg}(u)$ for a monotone function $u$ when $g$ is monotone.
\begin{lemma} \label{LEBJ}
Assume that $K$ satisfies (K1), (K2) and (K3).
 Assume that $g\in C[\alpha,\beta]$ is non-decreasing.
 Let $M$ be a constant such that $\rho=g(\beta)-g(\alpha)\leq M$.
 Let $U_0$ be a minimizer of $TV_{Kg}$ among piecewise constant functions with one jump in $(\alpha,\beta)$ satisfying $U(\alpha)=g(\alpha)$, $U(\beta)=g(\beta)$.
 Assume that $C_*=C_M-\lambda(\beta-\alpha)/2>0$.
 Let $v$ be a non-decreasing function on $[\alpha,\beta]$ which is continuous at $\alpha$ and $\beta$ and $v(\alpha)=g(\alpha)$, $v(\beta)=g(\beta)$.
 Then
\[
	TV_{Kg}(v) \geq \frac{C_*}{2} \left( \left(\sum_{z_i\in J_v} \rho_i \right)^2
	- \sum_{z_i\in J_v} \rho_i^2 + \left(\rho-\sum_{z_i\in J_v} \rho_i \right)^2 \right)
	+ TV_{Kg}(U_0),
\] 
where $\rho_i=v(z_i+0)-v(z_i-0)>0$ and $J_v=\{z_i\}_{i=1}^\infty$ ($\subset(\alpha,\beta)$) is the set of jump discontinuities.
 The non-negative term
\[
	\left( \sum_{z_i\in J_v} \rho_i \right)^2
	- \sum_{z_i\in J_v} \rho_i^2 + \left(\rho-\sum_{z_i\in J_v} \rho_i \right)^2
\]
vanishes if and only if $\rho=\rho_{i_0}$ with some $i_0$.
\end{lemma}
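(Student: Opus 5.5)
The plan is to prove the inequality first for non-decreasing piecewise constant $v$ with finitely many jumps, and then pass to general monotone $v$ using Corollary~\ref{CApMon}, Remark~\ref{RApMon} and Theorem~\ref{TCJ}. Throughout, write $S(v)$ for $\tfrac12$ of the bracketed non-negative term. The target inequality is $TV_{Kg}(v)\ge TV_{Kg}(U_0)+C_*S(v)$.

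\emph{Piecewise constant case.} Let $v$ take the values $c_0=g(\alpha)<c_1<\cdots<c_k=g(\beta)$, with jumps $\rho_i=c_i-c_{i-1}$. Then $\sum\rho_i=\rho$, so $S(v)=\sum_{i<\ell}\rho_i\rho_\ell$. Proposition~\ref{PInd} gives
\[
TV_K(v)\ge K(\rho)+C_M\sum_{i<\ell}\rho_i\rho_\ell=TV_K(U_0)+C_M S(v).
\]
For the fidelity I would generalize Lemma~\ref{LFid} through a pointwise bound. With the levels fixed, $\int(v-g)^2$ is smallest when $v(x)$ is a level nearest to $g(x)$ (this is Proposition~\ref{PMF} applied on each piece). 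Likewise $U_0(x)$ is whichever of $c_0,c_k$ is nearer to $g(x)$. Fix $x$ with $g(x)\in[c_j,c_{j+1}]$, and suppose $g(x)-c_0\le c_k-g(x)$, so that $2g(x)\le c_0+c_k$; the other case is symmetric. Then
\[
(U_0-g)^2-(v-g)^2\le (g-c_0)^2-(g-c_j)^2=(c_j-c_0)(2g-c_j-c_0)\le (c_j-c_0)(c_k-c_j)\le\sum_{i<\ell}\rho_i\rho_\ell .
\]
Integrating over $(\alpha,\beta)$ gives $\mathcal F(U_0)-\mathcal F(v)\le \tfrac\lambda2(\beta-\alpha)S(v)$. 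Adding this to the $TV_K$ estimate yields $TV_{Kg}(v)\ge TV_{Kg}(U_0)+C_*S(v)$. Here $U_0$ is the optimal one-jump competitor, and Proposition~\ref{PMF} identifies it with the $U_0$ of Section~\ref{SB}.

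\emph{General monotone $v$.} By Corollary~\ref{CApMon} and Remark~\ref{RApMon} ($K$ is subadditive by (K2)), I take non-decreasing piecewise constant $u_m$ with $u_m(\alpha+0)=g(\alpha)$, $u_m(\beta-0)=g(\beta)$, $u_m\to v$ in $L^2$ and $TV_{Kg}(u_m)\to TV_{Kg}(v)$. By the first step,
\[
TV_{Kg}(u_m)\ge TV_{Kg}(U_0)+\tfrac{C_*}{2}\Bigl(\rho^2-\sum_i(\rho_i^m)^2\Bigr).
\]
The key claim is $\limsup_m\sum_i(\rho_i^m)^2\le\sum_j\rho_j^2$, where $\rho_j$ are the jumps of $v$. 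To prove it, fix $N$ with $\sum_{j>N}\rho_j<\varepsilon$. By Theorem~\ref{TCJ}, the jumps $\rho^m_{i^j_m}$, $j\le N$, converge to $\rho_j$, and all remaining jumps have size at most $\eta_m+\varepsilon$ with $\eta_m\to0$. This uses the uniform statement of Theorem~\ref{TCJ} together with the fact that the tracked big jumps with $j>N$ are below $\varepsilon$ in the limit. Since the total sum of the remaining jumps is at most $\rho$, their squares contribute at most $(\eta_m+\varepsilon)\rho$. Letting $m\to\infty$ and then $\varepsilon\to0$ proves the claim. Hence
\[
TV_{Kg}(v)\ge TV_{Kg}(U_0)+\tfrac{C_*}2\Bigl(\rho^2-\sum_j\rho_j^2\Bigr).
\]
Put $\sigma=\sum\rho_j$. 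Then
\[
\rho^2-\sum\rho_j^2=\bigl(\sigma^2-\sum\rho_j^2\bigr)+(\rho-\sigma)^2+2\sigma(\rho-\sigma),
\]
and the last term is non-negative because $\sigma\le\rho$. This gives the stated inequality.

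\emph{Vanishing and main obstacle.} The bracketed term is a sum of non-negative pieces: $\sigma^2-\sum\rho_j^2=2\sum_{i<\ell}\rho_i\rho_\ell$ and $(\rho-\sigma)^2$. So it vanishes exactly when there is at most one jump and $\sigma=\rho$, i.e. $\rho=\rho_{i_0}$ for a single jump. The main obstacle is the limit step: controlling $\sum(\rho_i^m)^2$ needs the uniform smallness of the untracked jumps from Theorem~\ref{TCJ}, and the countably many jumps of $v$ need the $N$-truncation described above.
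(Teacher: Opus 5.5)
\textbf{Comparison.} Your route differs from the paper's in both steps and is cleaner. For piecewise constant $v$, the paper first moves the jump points to their optimal positions. It then merges jumps from the left one at a time. Each merge gains $C_M$ times a product of jumps in $TV_K$ via (K2) and loses at most $\frac{\lambda}{2}(\beta-\alpha)$ times the same product in fidelity via a Lemma~\ref{LFid}-type estimate. You get the whole sum $\sum_{i<\ell}\rho_i\rho_\ell$ at once from Proposition~\ref{PInd} and a pointwise comparison. That comparison has a small slip. If $g(x)\in[c_j,c_{j+1}]$ is nearer to $c_{j+1}$, then $(v-g)^2\ge(g-c_j)^2$ can fail. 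However, $(g-c_0)^2-(g-c_n)^2=(c_n-c_0)(2g-c_n-c_0)\le(c_n-c_0)(c_k-c_n)$ holds for every $n$ once $2g\le c_0+c_k$, so simply take $c_n$ to be the nearest level. In the limit step, the paper bounds $\liminf_m\sum_{i<\ell}\rho_i^m\rho_\ell^m$ from below via Proposition~\ref{PSec} and Lemma~\ref{LCIn}, discarding the cross terms between tracked and untracked jumps. Your identity $\sum_{i<\ell}\rho_i^m\rho_\ell^m=\frac12\bigl(\rho^2-\sum_i(\rho_i^m)^2\bigr)$ needs only an upper bound on the squares. It keeps those cross terms and gives the stronger bound with $\rho^2-\sum_j\rho_j^2$.

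\textbf{The gap.} The gap sits exactly in the step you flagged. Theorem~\ref{TCJ} gives $\rho^m_{i^j_m}\to\rho_j$ for each \emph{fixed} $j$, and uniform smallness only of the \emph{untracked} jumps. From $\rho_j<\varepsilon$ for all $j>N$ you cannot conclude $\sup_{j>N}\rho^m_{i^j_m}\le\varepsilon+o(1)$, because the convergence is pointwise in $j$. A priori, for each $m$ some $j_m>N$ with $j_m\to\infty$ could carry $\rho^m_{i^{j_m}_m}\ge2\varepsilon$ without contradicting any individual limit. This non-uniformity is what the paper's Lemma~\ref{LCIn} is meant to handle, and your proposal does not address it.

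\textbf{A repair.} One fix follows the proof of Theorem~\ref{TCJ}. Suppose indices $\ell_m\notin\{i^1_m,\dots,i^N_m\}$ had $\rho^m_{\ell_m}\ge2\varepsilon$ along a subsequence. Extract a further subsequence with $I_{\lambda^m_{\ell_m}}(\rho^m_{\ell_m})\to(p,p+r]$ and $r\ge2\varepsilon$. Since $u_m^{-1}$ is constant on these intervals and $u_m^{-1}\to v^{-1}$ in $L^1$, $v^{-1}$ is constant on $(p,p+r)$. Hence $(p,p+r)$ lies in the closure of a jump interval $I_{\lambda_j}(\rho_j)$ of $v$ with $\rho_j\ge2\varepsilon$, which forces $j\le N$. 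But $I_{\lambda^m_{i^j_m}}(\rho^m_{i^j_m})\to I_{\lambda_j}(\rho_j)$ as well, and distinct jump intervals of $u_m$ are disjoint. So $\ell_m=i^j_m$ for large $m$, a contradiction. Thus every jump other than the first $N$ tracked ones is eventually at most $2\varepsilon$. This gives $\limsup_m\sum_i(\rho_i^m)^2\le\sum_{j\le N}\rho_j^2+2\varepsilon\rho$, and the rest of your argument goes through.
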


Since $TV_{Kg}(U_0)=K\left(\left|v(\beta-0)-v(\alpha+0)\right|\right)$, this lemma is a precise form of Lemma~\ref{LWEBJ}.

We begin with estimates of lengths of converging intervals.
\begin{lemma} \label{LCIn}
Let $I$ be a bounded open interval.
 For $m\in\mathbb{N}$ let $\lambda_i^m\in I$ ($1\le i\le k_m$) be taken such that
\[
    \lambda_i^m < \lambda_{i+1}^m
    \quad\text{for}\quad
    i=1, \ldots, k_m - 1
\]
and set $\rho_i^m=\lambda_{i+1}^m-\lambda_i^m$ ($i=1,\ldots,k_m-1$).
 Take $\rho_{k_m}^m>0$ such that $\lambda_{k_m}^m+\rho_{k_m}^m\in I$.
 For an at most countable set $\Lambda\subset I$, assume that
\[
    J := \bigcup_{\lambda\in\Lambda}I_\lambda (\rho_\lambda) \subset I
    \quad\text{with}\quad \rho_\lambda > 0
\]
is a disjoint union.
 Assume that for $\lambda\in\Lambda$ there exists a sequence $\{i_m^\lambda\}_{m=1}^\infty$ such that
\[
    I_{\lambda_{i_m^\lambda}^m} (\rho_{i_m^\lambda}^m )
    \to I_\lambda(\rho_\lambda)
    \quad\text{as}\quad m\to\infty.
\]
Let
\[
    Z_m = \left\{ i \in \mathbb{N} \bigm|
    i=i_m^\lambda\ \text{for some}\ \lambda\in\Lambda\ \text{and}\ 1\le i\le k_m \right\}.
\]
Then
\[
    \lim_{m\to\infty} \sum_{i\in Z_m} \rho_i^m = \sum_{\lambda\in\Lambda} \rho_\lambda.
\]
\end{lemma}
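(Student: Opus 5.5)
The plan is to prove the two inequalities
\[
	\varliminf_{m\to\infty} \sum_{i\in Z_m} \rho_i^m \ge \sum_{\lambda\in\Lambda} \rho_\lambda,
	\qquad
	\varlimsup_{m\to\infty} \sum_{i\in Z_m} \rho_i^m \le \sum_{\lambda\in\Lambda} \rho_\lambda
\]
separately. Both use one structural fact. For fixed $m$ the intervals $I_{\lambda_i^m}(\rho_i^m)=(\lambda_i^m,\lambda_i^m+\rho_i^m]$ are pairwise disjoint, because consecutive ones are $(\lambda_i^m,\lambda_{i+1}^m]$. They all lie in $I$, so $\sum_{i\in Z_m}\rho_i^m\le |I|$, which is the Lebesgue measure of $\bigcup_{i\in Z_m} I_{\lambda_i^m}(\rho_i^m)$.

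\emph{Lower bound.} Fix a finite set $\Lambda'\subset\Lambda$. The limit intervals $I_\lambda(\rho_\lambda)$, $\lambda\in\Lambda'$, are disjoint and have positive length. Convergence of endpoints then gives, for $m$ large, that the approximating intervals $I_{\lambda^m_{i_m^\lambda}}(\rho^m_{i_m^\lambda})$ for different $\lambda\in\Lambda'$ cannot coincide. Hence the indices $i_m^\lambda$, $\lambda\in\Lambda'$, are distinct elements of $Z_m$, and
\[
	\sum_{i\in Z_m}\rho_i^m\ge\sum_{\lambda\in\Lambda'}\rho^m_{i_m^\lambda}\to\sum_{\lambda\in\Lambda'}\rho_\lambda .
\]
Letting $\Lambda'$ increase to $\Lambda$ gives the $\varliminf$ inequality.

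\emph{Upper bound.} Given $\varepsilon>0$, choose a finite $\Lambda'$ with $\sum_{\lambda\in\Lambda\setminus\Lambda'}\rho_\lambda<\varepsilon$. Split $Z_m$ into the indices $i_m^\lambda$ with $\lambda\in\Lambda'$, whose contribution tends to $\sum_{\Lambda'}\rho_\lambda$, and the remaining indices $Z_m''$. The task is to show $\varlimsup_m\sum_{i\in Z_m''}\rho_i^m\le\varepsilon$. I would argue by measure. Each interval $I_{\lambda^m_{i_m^\lambda}}(\rho^m_{i_m^\lambda})$ with $\lambda\notin\Lambda'$ should eventually lie in a small neighbourhood of $I_\lambda(\rho_\lambda)$, hence inside an $\eta$-neighbourhood of $J\setminus\bigcup_{\Lambda'}I_\lambda(\rho_\lambda)$. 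These intervals are also disjoint from the approximants for $\Lambda'$. So their total length is bounded by the measure of that neighbourhood minus the $\Lambda'$ part, which is $\le\varepsilon+o(1)$ as $\eta\to0$.

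\emph{Main obstacle.} This last step needs the convergence $I_{\lambda^m_{i_m^\lambda}}(\rho^m_{i_m^\lambda})\to I_\lambda(\rho_\lambda)$ to be uniform over the infinitely many $\lambda\notin\Lambda'$. Pointwise convergence for each $\lambda$ does not give this. Without it the claim can fail: a fixed interval of $u_m$ far from $J$ could be labelled $i_m^{\lambda_n}$ for some $n>m$, for every $m$, and then it is counted in every $Z_m$. I would therefore first try to extract the needed uniformity from the context in which the lemma is applied, namely Theorem~\ref{TCJ}. There the indices $i_m^\lambda$ with $\rho_\lambda$ small are forced to carry small $\rho^m$ by the uniform statement
\[
	\lim_{m\to\infty} \max \left\{\; \rho_\ell^m \bigm| \ell\neq i_m^j\ \text{for all}\ j\; \right\} = 0,
\]
together with the $L^1$-convergence of $u_m^{-1}$, which localizes these intervals near $J$. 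If that fails, I would state the lemma with the tail assumption
\[
	\lim_{N\to\infty}\varlimsup_{m\to\infty}\sum_{n>N}\rho^m_{i_m^{\lambda_n}}=0
\]
made explicit, under which the splitting argument above closes immediately.
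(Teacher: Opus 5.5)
Your lower bound is correct: it is the paper's Fatou step, and you check properly that the indices $i^\lambda_m$, $\lambda\in\Lambda'$, are distinct. Your objection to the upper bound is also correct, and it is a defect of the statement and of the paper's proof, not of your approach. The paper asserts $\sum_{i\in Z^\varepsilon_{m,\ell}}\rho_i^m\le|J^\varepsilon_\ell|$ ``for $m\ge m_0(\varepsilon)$''. This requires the approximants of all the possibly infinitely many $\lambda\in\Lambda^\varepsilon_\ell$ to lie in $J^\varepsilon_\ell$ from a single $m_0$ onward, and nothing in the hypotheses gives that. Your relabelling scenario is a genuine counterexample:
\begin{itemize}
\item take $I=(0,3)$ and $\Lambda=\{2^{-n}\}_{n\ge1}$ with $\rho_{2^{-n}}=2^{-n-1}$;
\item at stage $m$ use the partition points $2^{-n}$, $3\cdot 2^{-n-1}$ ($1\le n\le m$) and $1$, with $\rho^m_{k_m}=1$;
\item let $i^{2^{-n}}_m$ be the index of $(2^{-n},3\cdot2^{-n-1}]$ when $n\le m$, and $k_m$ otherwise.
\end{itemize}
Every approximant is eventually exact, yet $\sum_{i\in Z_m}\rho_i^m=1+\sum_{n\le m}2^{-n-1}\to 3/2\neq 1/2$.

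Two parts of your repair plan would still fail.

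\emph{The neighbourhood-measure bound.} Your bound for $Z_m''$ via $\eta$-neighbourhoods has the same flaw as the paper's ``evidently $|J^\varepsilon|\to|J|$''. The measure of an $\eta$-neighbourhood tends to the measure of the \emph{closure}, which can be strictly larger. Let $J$ be the union of the gaps (as half-open intervals) of a fat Cantor set $C\subset[0,1]$, with $I\supset[0,1]$. At stage $m$, partition $(0,1]$ into the gaps of length $\ge 1/m$ and the complementary pieces; their lengths tend to $0$ because $C$ is nowhere dense. Assign every shorter gap the piece that contains it. Then convergence holds uniformly in $\lambda$, both in Hausdorff distance and in length. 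Yet $Z_m$ contains every index, and $\sum_{i\in Z_m}\rho_i^m=1>1-|C|=|J|$. So uniform convergence is not the missing hypothesis.

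\emph{Theorem~\ref{TCJ}.} It does not supply what you hoped. Its uniform statement concerns only indices that are \emph{not} of the form $i^j_m$, and its sequences $i^j_m$ are unconstrained for small $m$, so nothing prevents $Z_m$ from being every index. The fat Cantor configuration also arises naturally in that setting: take a monotone $u$ whose jump intervals are the gaps of $C$, and piecewise constant $u_m\to u$ with $TV_K(u_m)\to TV_K(u)$.

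\emph{What does work.} Your second fallback, the explicit tail hypothesis (with $\rho^m_i:=0$ for $i\notin\{1,\dots,k_m\}$, counting multiplicity), closes the argument. It gives $\sum_{i\in Z_m}\rho_i^m\le\sum_{n\le N}\rho^m_{i^{\lambda_n}_m}+\sum_{n>N}\rho^m_{i^{\lambda_n}_m}$, hence the upper bound immediately, with no measure argument.

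For the downstream use in Lemma~\ref{LEBJ}, the lemma can be bypassed. Since $\sum_i\rho_i^k=\rho$, it suffices to show $\varlimsup_k\sum_i(\rho_i^k)^2\le\sum_j\rho_j^2$. To get this, handle the at most $\rho/\eta$ intervals of length $\ge\eta$ via Theorem~\ref{TCJ}; their subsequential limits are disjoint subintervals of jump intervals of $u$. The remaining intervals contribute at most $\eta\rho$. Then let $\eta\downarrow0$.
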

\begin{proof}
In general, $\rho_\lambda^m\to\rho_\lambda$ may not imply $\sum_{\lambda\in\Lambda}\rho_\lambda^m\to\sum_{\lambda\in\Lambda}\rho_\lambda$ as $m\to\infty$.
 We need to take geometry into account.

We consider an $\varepsilon$-neighborhood $J^\varepsilon$ of $J$, i.e.,
\[
    J^\varepsilon = \left\{ x \in I \bigm|
    |x-y|<\varepsilon\ \text{for all}\ y\in J \right\}.
\]
Evidently, the Lebesgue measure $|J^\varepsilon|$ of $J^\varepsilon$ converges to that of $J$, i.e.,
\[
    \lim_{\varepsilon\downarrow0} |J^\varepsilon| = |J|.
\]
Since $J^\varepsilon$ is open, it is at most a countable disjoint union of open intervals.
 However, the minimum length of such open interval is greater than $\varepsilon$ so $J^\varepsilon$ must be a finite disjoint union of open intervals, i.e.,
\[
    J^\varepsilon = \bigcup_{\ell=1}^{n_\varepsilon} J_\ell^\varepsilon.
\]
We decompose $\Lambda$ by $\Lambda=\bigcup_{\ell=1}^{n_\varepsilon}\Lambda_\ell^\varepsilon$ with
\[
    \Lambda_\ell^\varepsilon = \left\{ \lambda \in \Lambda \bigm|
    I_\lambda(\rho_\lambda) \subset J_\ell^\varepsilon \right\}.
\]
We also decompose $Z_m$ by $Z_m=\bigcup_{\ell=1}^{n_\varepsilon}Z_{m,\ell}^\varepsilon$ with
\[
    Z_{m,\ell}^\varepsilon = \left\{ i \in Z_m \bigm|
    i=i_m^\lambda \ \text{for some}\  \lambda \in \Lambda_\ell^\varepsilon \right\}.
\]
Since all $I_\lambda(\rho_\lambda)$ for $\lambda\in\Lambda_\ell^\varepsilon$ is contained in $J_\ell^\varepsilon$, we have
\[
    \sum_{i\in Z_{m,\ell}^\varepsilon} \rho_i^m
    \le |J_\ell^\varepsilon|, \quad
    1 \le \ell \le n_\varepsilon
\]
for sufficiently large $m$, say $m\ge m_0(\varepsilon)$.
 Thus
\[
    \varlimsup_{m\to\infty} \sum_{i\in Z_m} \rho_i^m
    = \varlimsup_{m\to\infty} \sum_{\ell=1}^{n_\varepsilon} \sum_{i\in Z_{m,\ell}^\varepsilon} \rho_i^m
    \le \sum_{\ell=1}^{n_\varepsilon} |J_\ell^\varepsilon| = |J^\varepsilon|.
\]
Sending $\varepsilon\downarrow0$ yields
\[
    \varlimsup_{m\to\infty} \sum_{i\in Z_m} \rho_i^m
    \le |J| = \sum_{\lambda\in\Lambda} \rho_\lambda.
\]
Since Fatou's lemma implies that
\[
    \varliminf_{m\to\infty} \sum_{i\in Z_m} \rho_i^m
    \ge \sum_{\lambda\in\Lambda} \rho_\lambda,
\]
the proof of Lemma~\ref{LCIn} is now complete.
\end{proof}

For later purpose, we prepare an elementary property of multiple of two sequences.
\begin{prop} \label{PSec}
Let $\{\rho_j\}_{j=1}^\infty$ be a sequence of positive numbers.
 Let $\{\rho_i^m\}_{i=1}^{k_m}$ be a set of non-negative numbers.
 Assume that for each $i$ there is $i_m^j$ such that $\rho_{i_m^j}^m\to\rho_j$ and
\[
    \sup \left\{ \rho_i^m \mid
    i\neq i_m^j\ \text{for any}\ j,\ 1
    \le i\le k_m \right\} \to 0
\]
$m\to\infty$.
 Assume that
\[
    s_\infty := \sum_{j=1}^\infty \rho_j
    \quad\text{and}\quad s:= \lim_{m\to\infty} \sum_{i=1}^{k_m}\rho_i^m
\]
exist and
\[
    \lim_{m\to\infty} \sum_{j=1}^\infty \rho_{i_m^j}^m = s_\infty\ \text{(}\le s\text{)}
\]
with interpretation that $\rho_i^m=0$ for $i>k_m$.
 Then
\begin{align*}
    \varliminf_{m\to\infty} \sum_{1\le i<\ell\le k_m} \rho_i^m \rho_\ell^m
    &\ge \sum_{1\le j<j'<\infty} \rho_j \rho_{j'} + (s-s_\infty)^2/2 \\
    &= \left(s_\infty^2 - \sum_{j=1}^\infty \rho_j^2 + (s-s_\infty)^2 \right)\biggm/ 2.
\end{align*}
\end{prop}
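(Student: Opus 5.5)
The plan is to split the index set $\{1,\dots,k_m\}$ into the ``large'' indices $Z_m=\{i_m^j\}_j$ and the ``small'' complement $S_m$, and to expand the pairwise sum accordingly. Since all terms are non-negative,
\[
\sum_{1\le i<\ell\le k_m}\rho_i^m\rho_\ell^m
\ \ge\ \sum_{\substack{i<\ell\\ i,\ell\in Z_m}}\rho_i^m\rho_\ell^m
+\sum_{\substack{i<\ell\\ i,\ell\in S_m}}\rho_i^m\rho_\ell^m ,
\]
where we simply discard the cross terms between $Z_m$ and $S_m$. We will treat the two sums separately and then add the resulting lower bounds.

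For the first sum we write, for a fixed $N$, $\sum_{i<\ell,\ i,\ell\in Z_m}\rho_i^m\rho_\ell^m\ge\sum_{1\le j<j'\le N}\rho^m_{i_m^j}\rho^m_{i_m^{j'}}$. This uses that the indices $i_m^j$ are distinct for distinct $j$ when $m$ is large, which we take from the setting: the limits $\rho_j>0$ correspond to disjoint jump intervals. Letting $m\to\infty$ gives $\sum_{1\le j<j'\le N}\rho_j\rho_{j'}$, and letting $N\to\infty$ yields $\sum_{j<j'}\rho_j\rho_{j'}$. Alternatively, one can use $\sum_{i<\ell\in Z_m}=\frac12\bigl((\sum_{Z_m}\rho_i^m)^2-\sum_{Z_m}(\rho_i^m)^2\bigr)$. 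Here the hypothesis $\sum_j\rho^m_{i_m^j}\to s_\infty$ controls the first square, and the squares are handled by Fatou in the other direction, via $\varlimsup\sum(\rho_{i_m^j}^m)^2\le\sum\rho_j^2$; this follows from $\rho_{i^j_m}^m\le s_\infty+o(1)$ together with a tail estimate coming from the convergence of the sums. The truncation argument is cleaner, so I would use that.

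For the second sum, let $\sigma_m=\sum_{i\in S_m}\rho_i^m=\sum_{i=1}^{k_m}\rho_i^m-\sum_j\rho^m_{i_m^j}\to s-s_\infty$. Then
\[
\sum_{\substack{i<\ell\\ i,\ell\in S_m}}\rho_i^m\rho_\ell^m=\tfrac12\Bigl(\sigma_m^2-\sum_{i\in S_m}(\rho_i^m)^2\Bigr)\ge\tfrac12\Bigl(\sigma_m^2-\sigma_m\max_{i\in S_m}\rho_i^m\Bigr),
\]
and the uniform smallness hypothesis on $\sup_{i\in S_m}\rho_i^m$ sends the subtracted term to $0$. Hence this sum tends to $(s-s_\infty)^2/2$, exactly as in Case 2 of the proof of Theorem~\ref{TCJ}.

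Adding the two liminf bounds gives the inequality. The final equality is the identity $\sum_{j<j'}\rho_j\rho_{j'}=\frac12\bigl(s_\infty^2-\sum_j\rho_j^2\bigr)$, which is valid because $\sum_j\rho_j$ converges absolutely. The main delicate point is the bookkeeping for the first sum: the indices $i_m^j$ must be distinct, which is needed to avoid double counting, and we need the passage $N\to\infty$ rather than a direct limit of an infinite sum. The truncation at $N$ avoids any need for uniform convergence, so I expect no real obstacle beyond stating the distinctness clearly.
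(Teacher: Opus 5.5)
Your proposal is correct and follows the paper's proof essentially step by step. You use the same split into $Z_m$ and its complement with the cross terms discarded, a Fatou-type bound for the pairs in $Z_m$, and the identity $\frac12\bigl(\sigma_m^2-\sum_{i\in S_m}(\rho_i^m)^2\bigr)$ combined with the uniform smallness hypothesis on the complement. Your truncation at $N$ is Fatou's lemma made explicit, and it gives the needed $\varliminf$ bound where the paper's text writes $\varlimsup$. The distinctness of the indices $i_m^j$ that you flag is also used implicitly in the paper, since the hypothesis is effectively $\sum_{i\in Z_m}\rho_i^m\to s_\infty$, which is how Lemma~\ref{LCIn} supplies it.
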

\begin{proof}
As in the proof of Lemma~\ref{LCIn}, we set
\[
    Z_m = \left\{ i \in \mathbb{N} \bigm|
    i=i_m^j\ \text{for some}\ j\ \text{and}\ 1\le i\le k_m \right\}.
\]
Then
\[
    \sum_{1\le i<\ell\le k_m} \rho_i^m \rho_\ell^m
    \ge \sum_{\substack{1\le i<\ell\le k_m\\ i,\ell\in Z_m}} \rho_i^m \rho_\ell^m
    + \sum_{\substack{1\le i<\ell\le k_m\\ i,\ell\in Z_m^c}} \rho_i^m \rho_\ell^m
    = I + I\!I.
\]
By Fatou's lemma
\[
    \varlimsup_{m\to\infty} I 
    \ge \sum_{ 1\le j<j'<\infty} \rho_j \rho_{j'}.
\]
For $i\in Z_m^c$, $\rho_i^m\to0$ uniformly so
\[
    \sum_{\substack{i=1 \\ i\in Z_m^c}}^{k_m} (\rho_i^m)^2
    \le \left( \max_{\substack{1\le i<k_m\\ i\in Z_m^c}} \rho_i^m \right) \sum_{i\in Z_m^c} \rho_i^m \to 0
\]
since $\sum_{i\in Z_m^c} \rho_i^m \to s-s_\infty$.
 Since
\[
    \sum_{\substack{1\le i<\ell\le k_m \\ i,\ell\in Z_m^c}} \rho_i^m \rho_\ell^m
    = \left( s_m^2 - \sum_{\substack{i=1 \\ i\in Z_m^c}}^{k_m} (\rho_i^m)^2\right)\Biggm/2,\quad
    s_m = \sum_{\substack{i=1 \\ i\in Z_m^c}}^{k_m} \rho_i^m,
\]
we thus observe that
\[
    \lim_{m\to\infty} I\!I = \left(\lim_{m\to\infty} s_m^2 - 0\right) \Bigm/2
    = (s-s_\infty)^2/2.
\]
Combining estimates for $I$ and $I\!I$, we now obtain
\[
    \varliminf_{m\to\infty} \sum_{1\le i<\ell\le k_m} \rho_i^m \rho_\ell^m
    \ge \sum_{1\le i<j<\infty} \rho_i \rho_j + (s-s_\infty)^2/2.
\]
The right-hand side equals
\[
    \left(s_\infty^2 - \sum_{j=1}^\infty \rho_j^2
    + (s-s_\infty)^2 \right) \biggm/2
\]
so the proof is now complete.
\end{proof}
\begin{proof}[Proof of Lemma~\ref{LEBJ}]
We may assume that $g(\beta)>g(\alpha)$.
 We approximate $v$ on $[\alpha,\beta]$ by a piecewise constant non-decreasing function $u_k$ with $m_k$ jumps by Corollary~\ref{CApMon}.
 Since $v$ is continuous at $\alpha$ and $\beta$, we may assume, by Remark~\ref{RApMon}, that $u_k(x)=g(\alpha)$ for $x$ close to $\alpha$ with $x>\alpha$ and that $u_k(x)=g(\beta)$ for $x$ close to $\beta$ with $x<\beta$.
 We denote jumps by $a_1<a_2<\cdots<a_{m-1}$ with $m=m_k+1$ and set $h_i=u_k(a_i+0)$ with convention that $a_0=\alpha$, $a_m=\beta$.
 We set
\[
	\rho_i = h_i - h_{i-1} \quad\text{for}\quad i = 1, \ldots, m_k
\]
which denotes the jump at each $a_i$.
 We fix $\rho_i$ and minimize $TV_{Kg}$ on $(\alpha, \beta)$.
 In other words, we minimize $\mathcal{F}$ by moving $a_i$'s.
 Let $\bar{u}_k$ be its minimizer.
 By Proposition~\ref{PMF}, facets (maximal intervals where $\bar{u}_k$ is a constant) of $\bar{u}_k$ consist of
\[
	[\alpha, x_1], [x_1, x_2], \ldots, [x_{m_k-1}, \beta]
\]
with $g(a_i)=\left(g(x_i)+g(x_{i+1})\right)/2$, $i=1,\ldots,m_k-1$; see Figure~\ref{FPrU} with $x_0=a_0,\ldots$, $x_{m-1}=a_m=\beta$.
\begin{figure}[tb]
\centering 
\includegraphics[keepaspectratio, scale=0.25]{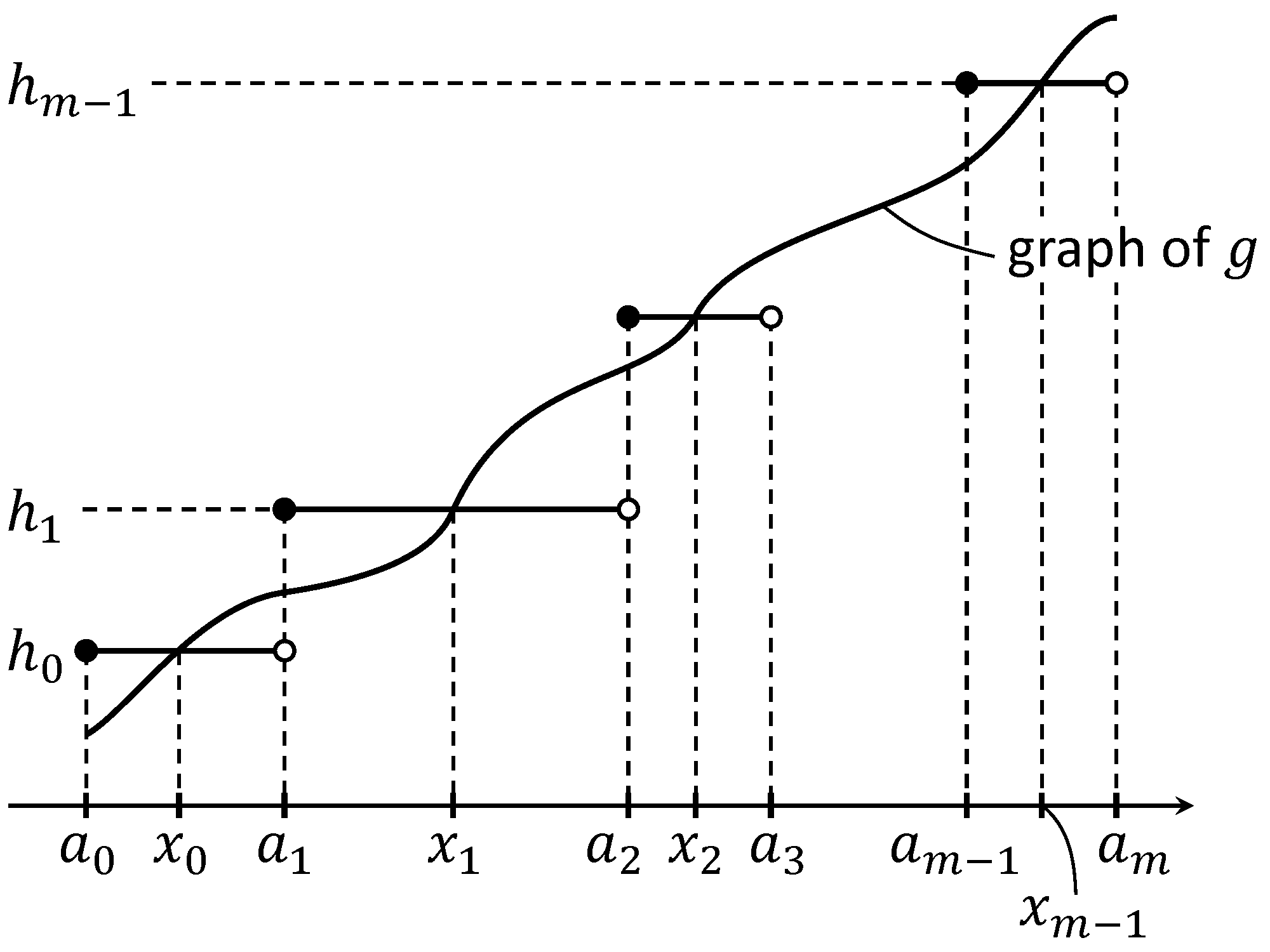}
\caption{the graphs of $g$ and $U$\label{FPrU}}
\end{figure}
 By this choice,
\[
	TV_{Kg}(u_k) \geq TV_{Kg}(\bar{u}_k).
\]

We shall estimate $TV_{Kg}(\bar{u}_k)$ from below as in Lemma~\ref{LND}.
 For $(\alpha,x_i)$,  let $V_i$ be a piecewise constant function on $[\alpha,x_i]$ with one jump at the point $y_i$ such that
\[
	g(y_i) = \left(g(x_i)+g(\alpha)\right)\bigm/2
	\quad\text{and}\quad V_i(\alpha) = g(\alpha), \quad
	V_i(x_i) = u_k(x_i).
\]
We set
\[
	W_i(x) = \left\{
\begin{array}{ll}
	V_i(x), & x \in [\alpha, x_i) \\
	\bar{u}_k(x), & x \in [x_i, \beta].
\end{array}
\right.
\]
We note that $W_{m_k}=V_{m_k}=U_0$.
 Since $C_*>0$, we argue as in Lemma~\ref{LND} and observe that
\begin{align*}
	TV_{Kg}(\bar{u}_k) &\geq TV_{Kg}(W_2) + C_* \rho_1 \rho_2 \\
	&\geq TV_{Kg}(W_3) + C_* \rho_3(\rho_1+\rho_2) + C_* \rho_1 \rho_2 \\
	&\cdots \\
	&\geq TV_{Kg}(V_{m_k}) + C_* \sum_{1\leq i<j\leq m_k}\rho_i \rho_j
	= TV_{Kg}(U_0) + C_* \sum_{1\leq i<j\leq m_k} \rho_i \rho_j.
\end{align*}

From now on, we write jumps of $u_k$ by $\rho_i^k$ instead of $\rho_i$.
 Our estimate for $TV_K(\bar{u}_k)$ yields
\[
	TV_{Kg}(u_k) \geq TV_{Kg}(U_0) + C_* \sum_{1\leq i<j\leq m_k}\rho_i^k \rho_j^k.
\]
We now apply Theorem~\ref{TCJ}.
 Then by Lemma~\ref{LCIn} we are able to apply Proposition~\ref{PSec} to get
\begin{align*}
	TV_{Kg}(v) &\geq TV_{Kg}(U)
    = \lim_{k\to\infty} TV_{Kg}(u_k) \ge TV_{Kg}(U_0)
    + C_* \varliminf_{k\to\infty} \sum_{1\le i<j\le m_k} \rho_i^k \rho_j^k \\
    &\ge TV_{Kg}(U_0) + \frac{C_*}{2} \left(s_\infty - \sum_{j=1}^\infty \rho_j^2 + (s-s_\infty)^2 \right)
\end{align*}
where $\{\rho_j\}$ be the set of all jumps of $U$ and $s_\infty:=\sum_{j=1}^\infty\rho_j$ ($\le g(\beta)-g(\alpha)$),
\[
    s := \lim_{k\to\infty} \sum_{i=1}^{m_k}\rho_i^k.
\]
Note that $s=\rho$ since $u_k$ approximates $v$.
 Since
\[
    s_\infty^2 - \sum_{j=1}^\infty \rho_j^2 \ge 0
    \quad\text{and}\quad(s-s_\infty)^2 \ge 0,
\]
the quantity
\[
    s_\infty^2 - \sum_{j=1}^\infty \rho_j^2 + (s-s_\infty)^2 = 0
\]
if and only if $s=s_\infty$ and $\sum_{j=1}^\infty \rho_j^2=\left(\sum_{j=1}^\infty \rho_j\right)^2$.
 The second identity implies that $\rho_j=0$ except one index $i_0$.
 This means that $v$ must have only one jump with size $\rho$ ($=s$) which equals $s_\infty$.
 The proof is now complete.
\end{proof}
\begin{proof}[Proof of Theorem~\ref{TMainMon}]
We may assume that $g$ is non-decreasing and not a constant.
 By \cite[Lemma 4.12]{GKKOS}, a minimizer $u$ is non-decreasing.
 By \cite[Lemma 3.2]{GKKOS}, the function $U$ is continuous on
\[ 
    C=\left\{x\in(a,b)\mid U(x)=g(x)\right\}.
\]
 Let $Q\subset\left[g(a),g(b)\right]$ be the set of $q$ such that
\[
    I_q = \left\{ x \mid g(x) = q \right\}
\]
is not a singleton.
 We set
\[
    C_\Gamma = C \setminus \bigcup_{q\in Q} I_q^*, \quad
    I_q^* = \left\{ x \in I_q \mid x> \inf I_q \right\}.
\]
Since $g$ is continuous, $C_\Gamma$ is not empty by \cite[Lemma 3.1]{GKKOS}.

If we prove that $C_\Gamma$ is a discrete set in $[a,b]$ so that it is a finite set, then, by \cite[Lemma 3.1]{GKKOS}, $U$ is a piecewise constant function.
 We shall prove that $C_\Gamma$ is a discrete set.
 By definition, if $x_1<x_2$ for $x_1,x_2\in C_\Gamma$, then $g(x_1)<g(x_2)$.
 Suppose that $C_\Gamma$ were not discrete.
 Then for any small $\varepsilon>0$, there would exist $x_1,x_2\in C_\Gamma$ with $x_1<x_2<x_1+\varepsilon$ such that the interval $(x_1,x_2)$ would contain infinity many elements of $C_\Gamma$.
 Since $U$ minimizes $TV_{Kg}$ on $(x_1,x_2)$ with $U(x_1)=g(x_1)$, $U(x_2)=g(x_2)$, applying Lemma~\ref{LEBJ} to $\alpha=x_1$, $\beta=x_2$ to conclude that $U$ must have at most one jump provided that $\varepsilon<2C_M/\lambda$.
 This yields a contradiction so we conclude that $C_\Gamma$ is discrete and $U$ is a non-decreasing piecewise constant function.
 The number of jumps can be estimated since the distance of two points in $C$ is at most $2C_M/\lambda$.
\end{proof}

As an application of approximation (Lemma~\ref{LApC}) we shall prove Corollary~\ref{CMain2}.

\begin{proof}[Proof of Corollary~\ref{CMain2}]
We approximate $g\in L^\infty(a,b)$ by $g_\ell\in C[a,b]$ such that $g_\ell\to g$ in $L^2(a,b)$ as $\ell\to\infty$ and
\[
    \essinf g \le g_\ell \le \esssup g
    \quad\text{on}\quad (a,b)
\]
for all $\ell\ge1$.
 Let $u\in BV(a,b)$ with $TV_{Kg}(u)<\infty$.
 By Lemma~\ref{LApC}, there is a sequence $\{u_\ell\}$ of piecewise constant functions such that $TV_K(u_\ell)\to TV_K(u)$ with $u_\ell\to u$ in $L^2(a,b)$ as $\ell\to\infty$.
 We thus observe that $\{u_\ell\}$ is a ``recovery" sequence in the sense that $TV_{Kg}(u_\ell)\to TV_{Kg}(u)$.
 Let $U_\ell$ be a minimizers of $TV_{Kg_\ell}$.
 By Theorem~\ref{TEJ}, the number of jump $m_\ell$ is bounded by
\[
    m_\ell \le \left[ (b-a)\lambda / A_M \right] + 1 =: m_* 
\]
with $M=\osc_{[a,b]}g$.
 By compactness of a bounded set in a finite dimensional spaces, $\{U_\ell\}$ has a convergent subspace, i.e., $U_\ell\to U$ with some $U$ in $L^2(a,b)$ (and also with respect to weak* topology of $BV$).
 Moreover, the limit $U$ is still piecewise constant function with at most $m_*$ jumps.
 By lower semicontinuity of $TV_K$ (\cite[Proposition 2.4]{GKKOS}), we see that
\[
    TV_{Kg}(U) \le \varliminf_{\ell\to\infty} TV_{Kg_\ell}(U_\ell)
    \le \varliminf_{\ell\to\infty} TV_{Kg_\ell}(u_\ell) = TV_{Kg}(u).
\]
We thus conclude that $U$ is a desired minimizer of $TV_{Kg}$ since $u\in BV(a,b)$ is an arbitrary element of $BV(a,b)$.
\end{proof}

In the rest of this this section, we shall prove a more general version of Theorem~\ref{TSJ}.
 As an application, we give an example that minimizers of $TV_{Kg}$ is not unique.
 For $K=K(\rho)$ and $c>0$, we set
\[
    Q_c(z) := K(c-z) + K(c+z)
    \quad\text{for}\quad
    z \in[-c,c].
\]
The main assumption is a kind of super linearity for the derivative $-Q'_c$.
\begin{enumerate}
\item[(Q)] The function $P=-Q_c$ is $C^1$ in $z\in[-c,c]$ and satisfies
\[
    P'(\mu z) < \mu P'(z)
    \quad\text{for}\quad
    z \in (0,c]
    \quad\text{and}\quad
    \mu \in (0,1).
\]
\end{enumerate}
\begin{prop} \label{PExj}
If
\[
    K(\rho) = \frac{\rho}{1+\rho\kappa}
\]
with $\kappa>0$, then $K$ fulfills (Q) for all $c>0$.
\end{prop}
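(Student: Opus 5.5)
Write $P(z)=-Q_c(z)=-K(c-z)-K(c+z)$ and compute $P'$ explicitly. With $K(\rho)=\rho/(1+\kappa\rho)$ we have $K'(\rho)=(1+\kappa\rho)^{-2}$, so
\[
	P'(z) = K'(c-z) - K'(c+z) = \frac{1}{(1+\kappa c-\kappa z)^2} - \frac{1}{(1+\kappa c+\kappa z)^2}.
\]
This is smooth on $[-c,c]$, since $1+\kappa c\pm\kappa z\ge 1>0$ there, so $P$ is $C^1$. Setting $A=1+\kappa c$ and combining fractions gives
\[
	P'(z) = \frac{4A\kappa z}{(A^2-\kappa^2z^2)^2}.
\]
Note that $A^2-\kappa^2 z^2 \ge A^2-\kappa^2c^2 = 1+2\kappa c>0$ on $[-c,c]$.

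The inequality (Q) then reduces to a one-line comparison. For $z\in(0,c]$ and $\mu\in(0,1)$,
\[
	P'(\mu z) = \frac{4A\kappa\mu z}{(A^2-\kappa^2\mu^2z^2)^2}, \qquad
	\mu P'(z) = \frac{4A\kappa\mu z}{(A^2-\kappa^2z^2)^2}.
\]
The numerators agree and are positive. Since $\mu^2z^2<z^2$, we get $A^2-\kappa^2\mu^2z^2 > A^2-\kappa^2z^2>0$. Hence the denominator of $P'(\mu z)$ is strictly larger, and $P'(\mu z)<\mu P'(z)$. Equivalently, the claim is that $P'(z)/z=4A\kappa/(A^2-\kappa^2z^2)^2$ is strictly increasing in $z>0$. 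This holds for every $c>0$ and $\kappa>0$.

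There is no real obstacle here. The only steps needing care are the algebraic simplification of $P'$ and checking that the denominators stay positive on $[-c,c]$, which is what gives the $C^1$ regularity and keeps the strict inequality valid up to the endpoint $z=c$.
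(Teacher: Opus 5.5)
Your proof is correct and follows the same route as the paper: you compute $-Q_c'$ in closed form as a positive multiple of $z/(c'^2-z^2)^2$ (your $A/\kappa$ is the paper's $c'=c+1/\kappa$), and then note that replacing $z$ by $\mu z$ strictly enlarges the denominator. Your bookkeeping is actually cleaner than the paper's. Its rewriting $K(\rho)=1-\kappa/(1+\rho\kappa)$ holds only for $\kappa=1$, which is harmless because the correct form differs by an additive constant and a positive factor $1/\kappa^2$. Its final chain also drops the factor $\mu$ and the strictness, both of which you keep.
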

\begin{proof}
This can be proved by a direct calculation.
 We observe that
\[
    K(\rho) = 1-\frac{\kappa}{1+\rho\kappa}
    = 1- \frac{1}{\frac{1}{\kappa}+\rho}.
\]
If we set $c'=c+1/\kappa$, then
\[
    Q_c(z) = 2 - \frac{1}{c'-z}- \frac{1}{c'+z}
\]
so that
\[
    Q'_c(z) = \frac{-1}{(c'-z)^2}+ \frac{1}{(c'+z)^2}
    = \frac{-(c'+z)^2 + (c'+z)^2}{(c'^2-z^2)^2}
    = \frac{-4c'z}{(c'^2-z^2)^2}.
\]
We now observe that
\[
    -Q'_c(\mu z) = \frac{4c'\mu z}{\left(c'^2-(\mu z)^2\right)^2}
    \le \frac{4c'\mu z}{(c'^2-z)^2} \le -Q'_c(z)
    \quad\text{for}\quad
    z \in [0,c),\ \mu \in|0,1)
\]
since $c'^2-(\mu z)^2\ge c'^2-z^2>0$.
 Thus, condition (Q) is fulfilled.
\end{proof}
\begin{thm} \label{TSJG}
Assume that $g$ is linear with $g'>0$ and that $K$ satisfies (Q).
 For $\Omega=(a,b)$, let $U$ be a non-decreasing piecewise constant minimizer of $TV_{Kg}$ which only jumps at $a<a_1<a_2<\cdots<a_m<b$.
 Then $a_{i+1}-a_i$ ($i=1,\ldots,m-1$) and the jump size $U(a_i+0)-U(a_i-0)$ ($i=1,\ldots,m$) are independent of $i$.
\end{thm}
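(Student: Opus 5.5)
Since $g$ is linear, we normalize $g(x)=x$ (after an affine change of variables, which rescales $\lambda$ and the jump sizes consistently) and write $U=h_i$ on $(a_i,a_{i+1})$ with $a_0=a$, $a_{m+1}=b$. The jump sizes are $\rho_i=h_i-h_{i-1}$. The energy is then the finite-dimensional function
\[
	E(a_1,\dots,a_m,h_0,\dots,h_m)=\sum_{i=1}^m K(h_i-h_{i-1})+\frac\lambda2\sum_{i=0}^m\int_{a_i}^{a_{i+1}}(h_i-x)^2\,dx .
\]
A minimizer $U$ is a minimizer of $E$ among configurations with the same number of jumps. Here we use that $K$ is $C^1$ near each $\rho_i>0$, which is part of (Q). Hence the first-order conditions hold, and we write them out.

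Stationarity in $a_i$ gives $(h_{i-1}-a_i)^2=(h_i-a_i)^2$, so $a_i=(h_{i-1}+h_i)/2$. Stationarity in $h_i$ for an interior facet ($1\le i\le m-1$) gives
\[
	K'(\rho_i)-K'(\rho_{i+1})+\lambda\int_{a_i}^{a_{i+1}}(h_i-x)\,dx=0 .
\]
Using $a_i=h_i-\rho_i/2$ and $a_{i+1}=h_i+\rho_{i+1}/2$, the integral equals $(\rho_i^2-\rho_{i+1}^2)/8$ up to sign. Setting $c=(\rho_i+\rho_{i+1})/2$ and $z=(\rho_{i+1}-\rho_i)/2$, this becomes an equation of the form
\[
	-Q_c'(z)=\lambda c z\quad\text{i.e.}\quad P'(z)=\lambda c\, z \quad\text{(up to a fixed positive constant).}
\]
The sign convention must be checked carefully here.

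The key step is to show that this forces $z=0$, i.e.\ $\rho_i=\rho_{i+1}$. The first-order condition alone does not suffice, because the line $\lambda c z$ may cross $P'$ at some $z\neq0$. We therefore use the second-order (minimality) condition as well.

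Fixing all other data, consider the one-parameter family that moves the level $h_i$ while keeping the optimal positions $a_i,a_{i+1}$. Along this family the reduced energy is
\[
	\phi(z)=-P(z)+\lambda\,\psi(z),
\]
where $\psi$ is an explicit quadratic in $z$ for linear $g$. Since $U$ is a minimizer, $\phi(z)\le\phi(0)$ must fail to be beaten, i.e.\ $\phi$ has a global minimum on $[-c,c]$ at the actual $z$. Condition (Q) says that $P'(z)/z$ is strictly increasing on $(0,c]$, i.e.\ $P'$ is strictly superlinear. Comparing with the linear term $\lambda c z$, the derivative $\phi'$ has at most one sign change on $(0,c]$. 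Because $P'$ is odd (as $Q_c$ is even), the configuration is symmetric, and any interior critical point $z\ne0$ is a local maximum, or else $z=0$ is the strict global minimum. This rules out $z\neq0$ and gives $\rho_i=\rho_{i+1}$.

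The boundary facets $[a,a_1]$ and $[a_m,b]$ do not enter this argument, which is consistent with the statement covering only $a_{i+1}-a_i$ for $1\le i\le m-1$. Once all $\rho_i$ are equal, $a_{i+1}-a_i=(\rho_i+\rho_{i+1})/2$ is constant as well. The main obstacle is the precise use of (Q) to exclude the nonsymmetric critical points. If the superlinearity argument does not close cleanly, I would instead compare $\phi(z)$ directly with $\phi(0)$ by integrating $P'(\mu z)<\mu P'(z)$ over $\mu\in(0,1)$. This yields $P(z)-P(0)<zP'(z)/2$. Combined with the first-order condition, which makes $zP'(z)/2$ equal to the fidelity gain, it gives $\phi(z)>\phi(0)$, contradicting minimality.
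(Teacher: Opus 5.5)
Your proposal is correct and is essentially the paper's proof. Restricting to two consecutive jumps and relocating the jump points by the midpoint rule of Proposition~\ref{PMF} gives exactly the family $u^z$, with reduced energy $Q_c(z)+\frac{\lambda}{2}\bigl(\frac{c z^2}{2}+\frac{c^3}{6}\bigr)$. Its derivative $\frac{\lambda c}{2}z-P'(z)$ changes sign at most once on $(0,c]$ by (Q), so any critical point $z\neq0$ in $(-c,c)$ is a strict local maximum and cannot be the minimizer. The differences are cosmetic: you keep a general $c=(\rho_i+\rho_{i+1})/2$ instead of rescaling to $c=1$ via $K_k$. Your fallback finish is also valid: it combines $P(z)-P(0)<zP'(z)/2$ with the first-order condition $P'(z)=\frac{\lambda c}{2}z$, which is the constant you left unchecked.
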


We shall calculate $\int|u-g|^2\,dx$ with $g(x)=x$ for some typical step functions $u$.
 Let $\operatorname{sgn}$ denote the (left semicontinuous) signature function, i.e.,
\[
\operatorname{sgn} x = \left\{
\begin{aligned}
	-1, &\quad x \le 0, \\	
	1, &\quad x > 0.
\end{aligned}
\right.
\]
Then for $\sigma>0$
\[
    \int_{-\sigma}^\sigma |\sigma\operatorname{sgn}x-x|^2\,dx
    = \sigma^3 \int_{-1}^1 |\operatorname{sgn}y-y|^2\,dy
\]
by changing the variable $x=\sigma y$.
 Since
\[
    \int_{-1}^1 |\operatorname{sgn}y-y|^2\,dy
    = 2 \int_0^1 y^2 \,dy = \frac23,
\]
we end up with
\begin{equation} \label{EBas1}
    \int_{-\sigma}^\sigma |\sigma\operatorname{sgn}x-x|^2\,dx
    = \frac23 \sigma^3 = \frac{(2\sigma)^3}{12}.
\end{equation}
We next calculate $\int|u-x|^2\,dx$ for
\[
u^z(x) := \left\{
\begin{aligned}
	\sigma\operatorname{sgn}(x+\sigma')-\sigma', &\quad -1<x<z, \\	
	\sigma'\operatorname{sgn}(x-\sigma)+\sigma, &\quad z<x<1
\end{aligned}
\right.
\]
with $\sigma=(1+z)/2$, $\sigma'=(1-z)/2$, where $z\in(-1,1)$.
 This is a monotone step function with two jumps.
 It can be written as
\[
u^z(x) = \left\{
\begin{aligned}
	-1, &\quad -1<x\le-\sigma', \\	
	z, &\quad -\sigma'<x\le\sigma, \\
    1, &\quad \sigma<x<1;
\end{aligned}
\right.
\]
see Figure~\ref{FSgn}.
\begin{figure}[tb]
\centering 
\includegraphics[keepaspectratio, scale=0.27]{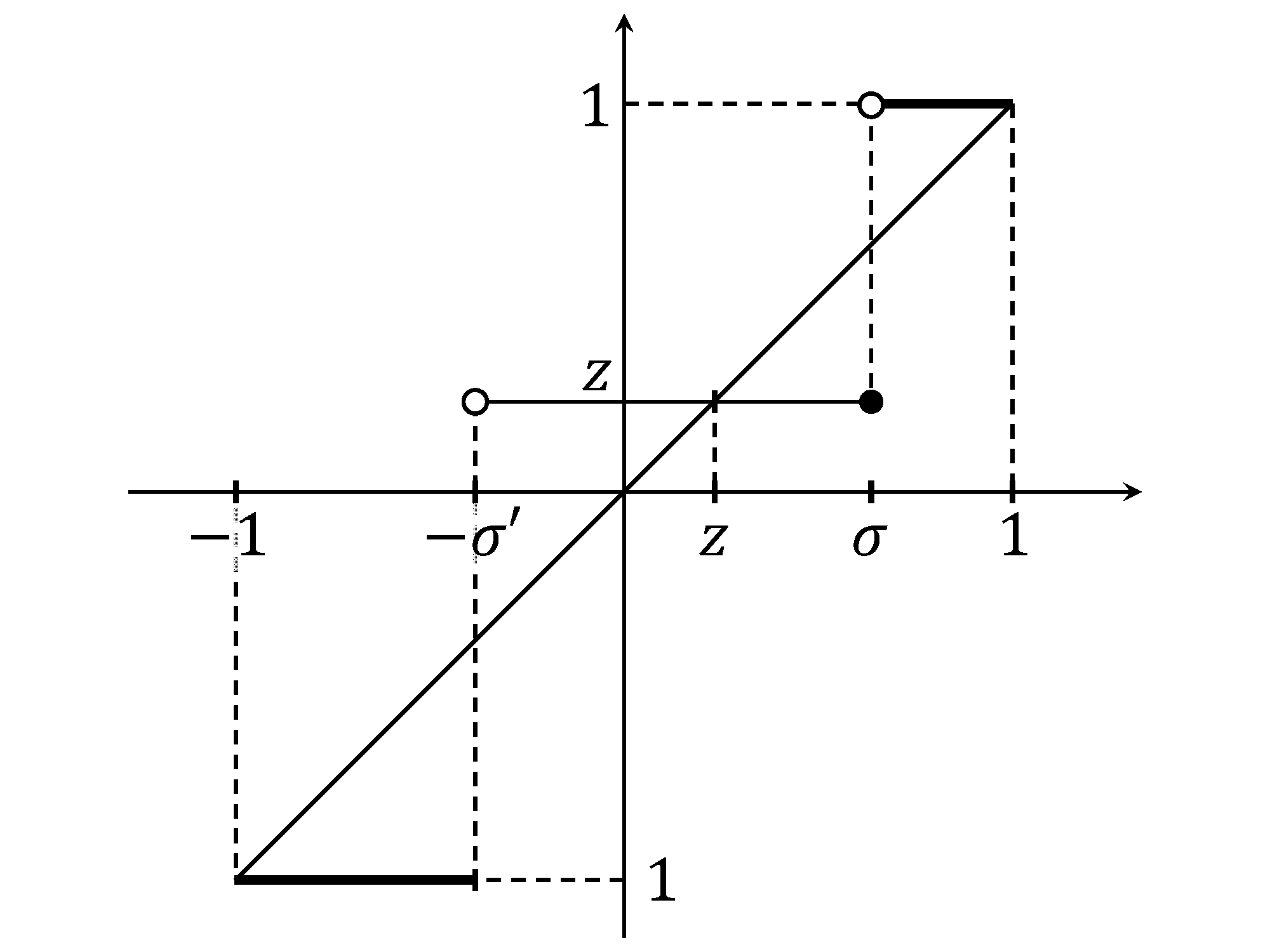}
\caption{the graph of $u^z$\label{FSgn}}
\end{figure}
\begin{lemma} \label{LSgn}
For $z\in(-1,1)$,
\[
    \int_{-1}^1 |u^z-x|^2 \,dx
    = \frac{z^2}{2} + \frac16.
\]
\end{lemma}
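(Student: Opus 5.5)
Split the integral over the three facets of $u^z$ and reduce each piece to the basic identity \eqref{EBas1}. The definition of $u^z$ already suggests this decomposition: on $(-1,z)$ the function is $\sigma\operatorname{sgn}(x+\sigma')-\sigma'$, which is a shifted copy of $\sigma\operatorname{sgn}$ centred at $-\sigma'$. On $(z,1)$ it is a shifted copy of $\sigma'\operatorname{sgn}$ centred at $\sigma$. Indeed $-\sigma'-\sigma=-1$ and $-\sigma'+\sigma=z$, so the interval $(-1,z)$ is exactly $(-\sigma'-\sigma,-\sigma'+\sigma)$. Likewise $\sigma-\sigma'=z$ and $\sigma+\sigma'=1$, so $(z,1)=(\sigma-\sigma',\sigma+\sigma')$.

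On $(-1,z)$ substitute $x=y-\sigma'$ with $y\in(-\sigma,\sigma)$. Then
\[
u^z(x)-x=\sigma\operatorname{sgn}y-\sigma'-(y-\sigma')=\sigma\operatorname{sgn}y-y,
\]
because the shift of the function and the shift of $g(x)=x$ cancel. By \eqref{EBas1} this piece equals $(2\sigma)^3/12=(1+z)^3/12$.

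On $(z,1)$ substitute $x=y+\sigma$ with $y\in(-\sigma',\sigma')$. Then
\[
u^z(x)-x=\sigma'\operatorname{sgn}y+\sigma-(y+\sigma)=\sigma'\operatorname{sgn}y-y,
\]
and \eqref{EBas1} gives $(2\sigma')^3/12=(1-z)^3/12$.

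Adding the two pieces and expanding,
\[
\frac{(1+z)^3+(1-z)^3}{12}=\frac{2+6z^2}{12}=\frac16+\frac{z^2}{2}.
\]
This is the claimed value. The left semicontinuous convention for $\operatorname{sgn}$ only affects a set of measure zero, so it does not change the integral.

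The only step needing care is confirming that the piecewise formula for $u^z$ matches its three-valued description. On $(-1,z)$ we have $u^z=-1$ for $x\le-\sigma'$ and $u^z=\sigma-\sigma'=z$ otherwise. On $(z,1)$ we have $u^z=\sigma-\sigma'=z$ for $x\le\sigma$ and $u^z=1$ otherwise. Both agree with the displayed three-valued form, so the shift identities above are valid. No real obstacle is expected beyond this bookkeeping.
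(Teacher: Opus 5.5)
Your proposal is correct and follows essentially the same route as the paper: split at $z$, translate each piece to $\int_{-\sigma}^{\sigma}|\sigma\operatorname{sgn}y-y|^2\,dy$ and $\int_{-\sigma'}^{\sigma'}|\sigma'\operatorname{sgn}y-y|^2\,dy$, apply \eqref{EBas1}, and sum $\bigl((1+z)^3+(1-z)^3\bigr)/12$. Your explicit check that the piecewise formula matches the three-valued form of $u^z$ is a detail the paper only asserts.
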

\begin{proof}
We proceed
\begin{align*}
    \int_{-1}^1 |u^z-x|^2 \,dx
    &= \int_{-1}^z |u^z-x|^2 \,dx
    + \int_z^1 |u^z-x|^2 \,dx, \\
    \int_{-1}^z |u^z-x|^2 \,dx
    &= \int_{-\sigma}^\sigma |\sigma\operatorname{sgn} x-x|^2 \,dx, \\
    \int_z^1 |u^z-x|^2 \,dx
    &= \int_{-\sigma'}^{\sigma'} |\sigma' \operatorname{sgn} x-x|^2 \,dx
\end{align*}
by translation.
 By \eqref{EBas1}, we now obtain
\[
    \int_{-1}^z |u^z-x|^2 \,dx
    = \frac{(1+z)^3}{12}, \quad
    \int_z^1 |u^z-x|^2 \,dx
    = \frac{(1-z)^3}{12}.
\]
We now conclude that
\[
    \int_{-1}^1 |u^z-x|^2 \,dx
    = \frac{1}{12} \left[(1+z)^3+(1-z)^3 \right]
    = \frac{1}{12} 2\cdot (1+3z^2)
    = \frac12 z^2 + \frac16.
\]
\end{proof}
\begin{proof}[Proof of Theorem~\ref{TSJG}]
It suffices to prove that
\[
    U(a_i+0) - U(a_i-0)
    = U(a_{i+1}+0) - U(a_{i+1}-0), \quad
    i=1,\ldots,m-1
\]
since $a_{i+1}-a_i=a_{i+2}-a_{i+1}$ ($i=1,\ldots,m-1$) follows from
\[
    \frac{U(a_i+0)+U(a_i-0)}{2}
    = g(a_i) \quad
    \text{(by Proposition~\ref{PMF})}
\]
and $g$ is linear.
 By restricting the functional $TV_{Kg}$ in $(a_i-\delta,a_{i+1}+\delta)$ for small $\delta>0$, we may assume that $m=2$.
 By translation and dilation we may assume $\Omega=(-a,a)$ with $a>1$, and $U(\pm1)=g(\pm1)$ and $g(x)=kx$, $k>0$.
  We change the depending variable $U$ by $ku$ to get
\[
    TV_K(U) = TV_{K_k}(u)
\]
with $K_k(\rho)=K(k\rho)$.
 We note that if $K$ satisfies (Q), so is $K_k$.
 Since
\[
    TV_{Kg}(U) = TV_K(U)
    + \frac\lambda2 \int_\Omega |U-kx|^2\,dx
    = TV_{K_k}(u) + \frac{\lambda k^2}{2} \int_\Omega |u-x|^2\,dx,
\]
we may assume that $g(x)=x$.
 By Proposition~\ref{PMF}, a minimizer (with two jumps) equals $u^z$ for some $z\in(-1,1)$, since $U(\pm1)=g(\pm1)=\pm1$.

We next observe that
\[
    E(z) := TV_{Kg}(u^z) = Q_1(z)
    + \frac\lambda2 \int_{-1}^1 |u^z-x|^2\,dx
    = Q_1(z) + \frac\lambda2 \left( \frac{z^2}{2}+\frac16 \right)
\]
by Lemma~\ref{LSgn}.
 By definition, $E(z)$ is an even function, i.e., $E(z)=E(-z)$ for $z\in[0,1]$.
 Setting $P=-Q_1$, we get
\[
    E'= \frac{dE}{dz} (z)
    = \frac\lambda2 z - P'(z).
\]
By (Q), only following three cases occur.
\begin{enumerate}
\item[(i)] $E'$ is positive on $[0,1]$;
\item[(i\hspace{-0.1em}i)] $E'$ is positive on $[0,\rho)$ for some $\rho\in(0,1)$ and negative for $(\rho,1]$;
\item[(i\hspace{-0.1em}i\hspace{-0.1em}i)] $E'$ is negative on $[0,1]$;
\end{enumerate}
see Figure~\ref{FCD}.
\begin{figure}[tb]
\centering 
\includegraphics[keepaspectratio, scale=0.22]{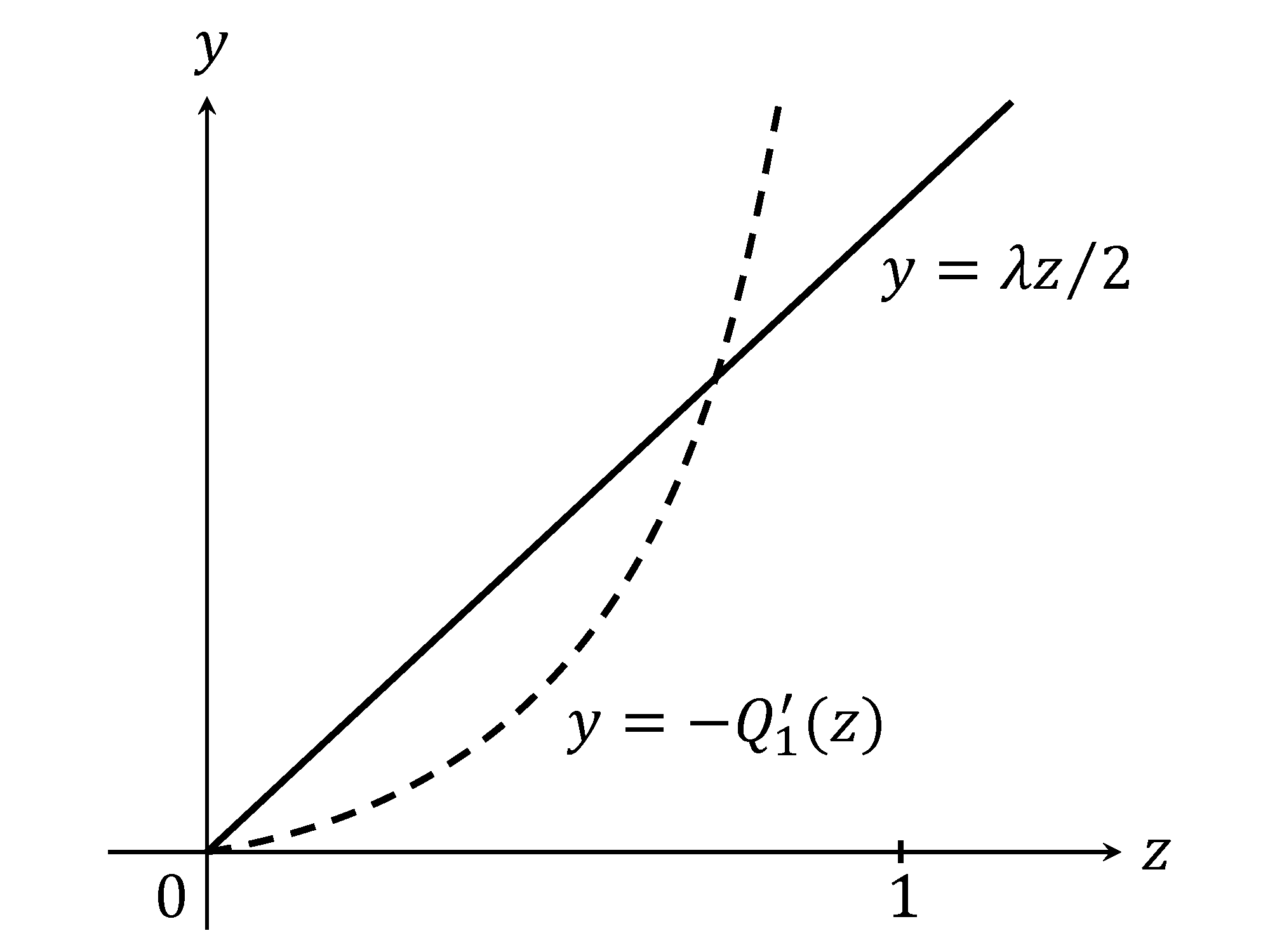}
\caption{the graph of $-Q'_1$\label{FCD}}
\end{figure}
Thus there is no chance that $E$ is minimized in $[0,1]$ except points $z=0$ and $z=1$.
 Since we have assumed that there are two jumps, the case $z=1$ should be excluded.
 Thus, in our setting the minimum value of $E$ is only attained at $z=0$, i.e., two jumps have the same size.
 The proof is now complete.
\end{proof}
\begin{proof}[Proof of Theorem~\ref{TSJ}]
By Proposition~\ref{PExj}, Theorem~\ref{TSJG} yields Theorem~\ref{TSJ}.
\end{proof}

As an application we prove that there is a chance that minimizers are not unique.

For a given $L>0$ and $m=1,2,\ldots$, we consider a step function
\begin{equation} \label{EStm}
    u_m(x) := kd \ \text{for}\ 
    x \in \left( \left(k-\frac12\right)d, \left(k+\frac12\right)d \right], \quad
    d=L/m, \quad
    k=0,1,2,\ldots,m
\end{equation}
on $(0,L)$ and calculate $TV_{Kg}(u_m)$ for $g(x)=x$.
 By Lemma~\ref{LSgn}, we observe that
\[
    TV_{Kg}(u_m)
    = K(d)m + \frac\lambda2 m \frac{d^3}{12}
    = L \left\{ \frac{K(d)}{d} + \frac\lambda2\frac{d^2}{12} \right\}
\]
by \eqref{EBas1}.
 If $K(\rho)=\rho/(1+\rho)$, then
\[
    \frac{K(\rho)}{\rho}
    = \frac{1}{\rho+1}.
\]
Thus
\begin{equation} \label{EESt}
    \frac1L TV_{Kg}(u_m)
    = \frac{1}{\frac{L}{m}+1} + \frac{\lambda}{24} \left(\frac{L}{m} \right)^2
    =: E(m).
\end{equation}
\begin{prop} \label{PCal}
Let $E$ be given by \eqref{EESt}.
 Then, there exists a unique value $\lambda$ such that there exists $m_0(\lambda)\in(1,2)$ satisfying
\begin{enumerate}
\item[(i)] $E(m)$ is decreasing in $\left(0,m_0(\lambda)\right]$;
\item[(i\hspace{-0.1em}i)] $E(m)$ is increasing in $\left[m_0(\lambda), \infty\right)$;
\item[(i\hspace{-0.1em}i\hspace{-0.1em}i)] $E(1)=E(2)$.
\end{enumerate}
Such $\lambda$ is explicitly written as
\[
	\lambda = \frac{2^5}{L(L+1)(L+2)}.
\]
\end{prop}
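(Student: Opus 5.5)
The plan is to treat $m$ as a continuous variable $m>0$ and rewrite
\[
E(m)=\frac{m}{L+m}+\frac{\lambda L^2}{24\,m^2}.
\]
The statement then splits into two independent parts. The first is a monotonicity analysis that holds for every $\lambda>0$ and gives (i) and (ii) with some unique turning point $m_0(\lambda)$. The second is the condition (iii), which pins down $\lambda$ uniquely. Locating $m_0(\lambda)$ in $(1,2)$ then follows from combining the two.

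For the monotonicity I will differentiate:
\[
E'(m)=\frac{L}{(L+m)^2}-\frac{\lambda L^2}{12\,m^3}
=\frac{L}{m^3}\left(\frac{m^3}{(L+m)^2}-\frac{\lambda L}{12}\right).
\]
So the sign of $E'$ is the sign of $h(m)-\lambda L/12$, where $h(m)=m^3/(L+m)^2$. A direct computation gives $h'(m)=m^2(m+3L)/(L+m)^3>0$. Moreover $h(0+)=0$ and $h(m)\to\infty$ as $m\to\infty$. Hence there is a unique $m_0(\lambda)>0$ with $h(m_0)=\lambda L/12$. It follows that $E'<0$ on $(0,m_0)$ and $E'>0$ on $(m_0,\infty)$, which gives (i) and (ii), with strict monotonicity on each side.

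For (iii) I will write out $E(1)=E(2)$ explicitly:
\[
\frac{1}{L+1}+\frac{\lambda L^2}{24}=\frac{2}{L+2}+\frac{\lambda L^2}{96}.
\]
This equation is linear in $\lambda$. It gives $\lambda L^2\cdot\frac{3}{96}=\frac{2}{L+2}-\frac{1}{L+1}=\frac{L}{(L+1)(L+2)}$, that is, $\lambda=2^5/\bigl(L(L+1)(L+2)\bigr)$. This value is positive and is the unique $\lambda$ satisfying (iii).

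Finally, with this $\lambda$, the strict decrease of $E$ on $(0,m_0]$ and strict increase on $[m_0,\infty)$ rule out $E(1)=E(2)$ whenever both $1,2$ lie on the same side of $m_0$, and also rule out $m_0\in\{1,2\}$. Therefore $m_0(\lambda)\in(1,2)$. Uniqueness of $\lambda$ is already forced by (iii) alone, since (i) and (ii) hold for every $\lambda>0$. No step is a real obstacle; the only point needing care is showing $h$ is strictly increasing, so that the turning point is unique and the monotonicity is strict.
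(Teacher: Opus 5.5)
Your proof is correct and has the same outline as the paper's: show $E$ has a single turning point $m_0(\lambda)$, use $E(1)=E(2)$ to force $m_0(\lambda)\in(1,2)$, and solve the resulting linear equation to get $\lambda=2^5/(L(L+1)(L+2))$. The only difference is how unimodality is obtained: the paper substitutes $d=L/m$ and uses strict convexity of $f(d)=\frac{1}{d+1}+\frac{\lambda}{24}d^2$ ($E$ itself is not convex in $m$), whereas you analyze the sign of $E'(m)$ through the increasing function $h(m)=m^3/(L+m)^2$, which also makes explicit that (i)--(ii) hold for every $\lambda>0$.
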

\begin{proof}
We set
\[
    f(d) := \frac{1}{d+1} + \frac{\lambda}{24} d^2
    = E(L/d), \quad d>0.
\]
The condition (i\hspace{-0.1em}i\hspace{-0.1em}i) is equivalent to
\begin{equation} \label{EDf}
    f(L) = f\left( \frac{L}{2} \right).
\end{equation}
Since $f$ is a strictly convex function, $f$ has a unique minimum on $(L/2,L)$ if \eqref{EDf} holds.
 For $E(m)$, this implies the existence of $m_0(\lambda)\in(1,2)$ if \eqref{EDf} is fulfilled.
 To complete the proof, it suffices to find $\lambda$ such that \eqref{EDf} holds.
 The condition \eqref{EDf} is of the form
\[
	\frac{1}{L+1} + \frac{\lambda}{24} L^2
	= \frac{1}{\frac{L}{2}+1} + \frac{\lambda}{24} \left( \frac{L}{2} \right)^2
\]
or
\[
	\frac{\lambda}{24} \left( 1-\frac14 \right) L^2
	= \frac{2}{L+2} - \frac{1}{L+1}.
\]
Since the right-hand side equals
\[
	\frac{L}{(L+2)(L+1)},
\]
we end up with
\[
	\lambda = 24 \cdot \frac43 \frac{1}{L(L+1)(L+2)}.
\]
Thus $\lambda$ is uniquely determined and for such value, \eqref{EDf} is fulfilled.
\end{proof}
\begin{remark} \label{RNU}
From the proof, it is easy to see we can take any interval $(\ell,\ell+\alpha)$, $\alpha>0$ instead of $(1,2)$.
\end{remark}
\begin{proof}[Proof of Corollary~\ref{CSJ}]
We may assume that minimizer is of the form \eqref{EStm} by Theorem~\ref{TSJ}.
 The number $m$ is taken so that $TV_{Kg}(u_m)$ is minimized.
 However, if $\lambda$ is chosen as in Proposition~\ref{PCal}, then both $u_1$ and $u_2$ minimize $TV_{Kg}$.
 Thus the minimizer is not unique.
\end{proof}

\section{Numerical Experiments} \label{SNE}

In this section, we present numerical experiments to compare the segmentation results of the proposed Kobayashi-Warren-Carter (KWC) type energy with the original Rudin-Osher-Fatemi (ROF) energy and the Mumford-Shah energy.
Since the direct numerical computation of $L^2$-gradient flows for non-convex energies involving jump sets is highly challenging, we employ their phase-field approximations for the Mumford-Shah and the proposed models.
Specifically, we compute the $L^2$-gradient flows for the following three models in a one-dimensional setting $\Omega = (0,1)$:

\begin{enumerate}
\item \textbf{ROF (TV) model}: We directly compute the gradient flow of the standard ROF energy:
\[
    E_{ROF}(u) = \sigma \int_\Omega |Du|
    + \mathcal{F}(u), \quad
    \mathcal{F}(u) = \frac{\lambda}{2} \int_\Omega |u-g|^2 \,dx.
\] 
\item \textbf{AT model (Phase-field approximation of Mumford-Shah)}: We use the Ambrosio-Tortorelli (AT) approximation:
\[
    E_{AT}^{\varepsilon}(u,v) = \int_\Omega \left( \sigma v^2 |u'|^2 + \frac{\varepsilon}{2} |v'|^2 + \frac{1}{2\varepsilon} (v-1)^2 \right) dx 
    + \mathcal{F}(u).
\]  
\item \textbf{KWC model (Phase-field approximation of $TV_{Kg}$)}: Instead of directly solving the singular limit model $TV_{Kg}(u)$ with the non-convex jump energy $K(\rho)$, we compute the gradient flow of the original KWC energy (\cite{GOSU} and \cite{GKKOSU}), which serves as its phase-field approximation:
\[
    E_{KWC}^{\varepsilon}(u,v) = \sigma \int_\Omega v^2 |Du| + \int_\Omega \left( \frac{\varepsilon}{2} |v'|^2 + \frac{1}{2\varepsilon} (v-1)^2 \right) dx
    + \mathcal{F}(u).
\]
\end{enumerate}

The numerical scheme is based on the Chambolle-Pock primal-dual algorithm for the non-smooth total variation term $|Du|$ without any regularization, while the phase-field variable $v$ is updated via an alternating implicit scheme. 
To strictly match the theoretical assumptions in Section \ref{SQ} (in particular, the proof of Theorem \ref{TSJG} assuming $U(\pm 1) = g(\pm 1)$), we impose the Dirichlet boundary condition for $u$ such that $u(0)=g(0)$ and $u(1)=g(1)$, while maintaining the homogeneous Neumann boundary condition for $v$.

Unless otherwise specified, we use the following common parameters for the simulations: the number of spatial grid points $N=1000$ (which corresponds to a grid size of $h=1/999$), the time step size $\Delta t = 0.01$, the total variation weight $\sigma = 1.0$, and the phase-field parameter $\varepsilon = 0.005$. The computations are carried out up to a sufficiently large time (e.g., $T=100$) to ensure that the solutions have completely reached their steady states.

\subsection{Verification of Theorem \ref{TSJ}: Linear Data}
First, we consider the linear monotone data $g(x) = x$ to verify Theorem \ref{TSJ}. 
Because the KWC energy is highly non-convex, directly computing the $L^2$-gradient flow with the naive initial condition $u(x,0) = g(x)$ often leads to local minima. 
Indeed, as illustrated in Figure \ref{Fig:Exp1}(a), the gradient flow gets trapped in a local minimum, resulting in a piecewise constant function with irregular jump sizes and intervals.

Therefore, to properly verify the properties of the global minimizer stated in Theorem \ref{TSJ}, we employ the theoretically predicted piecewise constant functions defined in \eqref{EStm} as initial conditions.
In the phase-field computation, to prevent the initial sharp jumps of $u$ from collapsing under the strong total variation penalty, we apply a pre-relaxation step: we first update $v$ to its steady state while keeping $u$ fixed, forming proper phase-field profiles (damage zones) at the jump locations, and then start the coupled gradient flow.
We confirm that these theoretical states are stably preserved as steady states.

Figure \ref{Fig:Exp1}(b) shows the steady-state solutions of the phase-field KWC model for varying optimal parameter sets $(\lambda, m)$ calculated from the theoretical energy evaluation.
As proved in Theorem \ref{TSJ}, the global minimizer $U$ of the KWC type energy is a non-decreasing piecewise constant function. 
Remarkably, the numerical result strictly reproduces the exact shape of the global minimizer: the distance between jump points $a_{i+1}-a_i$ and the jump sizes $U(a_i+0)-U(a_i-0)$ are perfectly uniform, and the lengths of the steps at the boundaries are exactly half of the interior steps.

\begin{figure}[tb]
\centering 
\includegraphics[width=\linewidth]{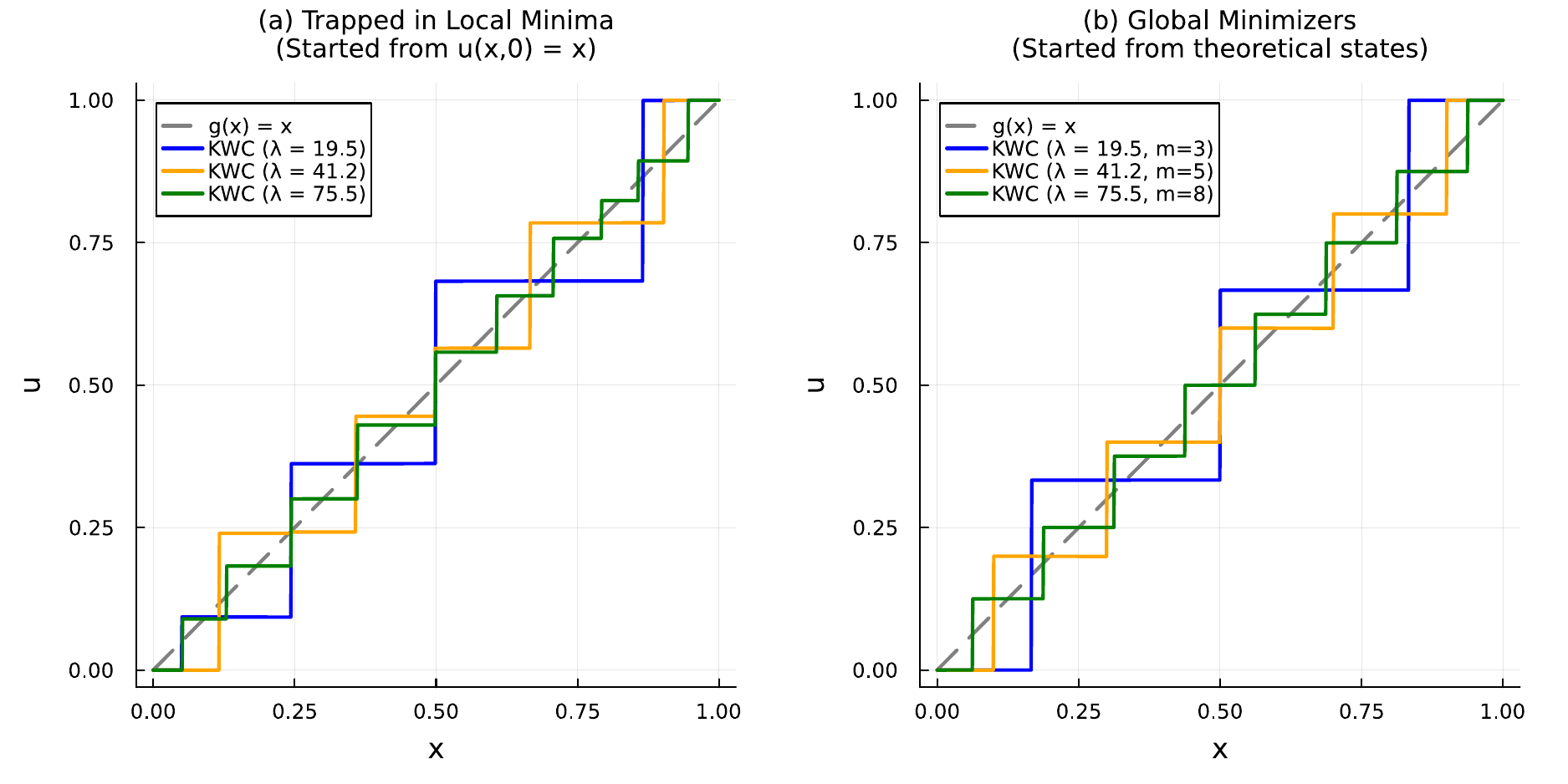}
\caption{Numerical results for linear data $g(x)=x$. (a) The KWC gradient flow starting from the naive initial condition $u(x,0)=g(x)$ gets trapped in a local minimum with irregular jumps. (b) By starting from the theoretically predicted states and applying Dirichlet boundary conditions, the phase-field KWC model perfectly captures the equal jump intervals and sizes of the true global minimizer.}
\label{Fig:Exp1}
\end{figure}

\subsection{Verification of Corollary \ref{CSJ}: Non-uniqueness of Minimizers}
Next, we demonstrate the non-uniqueness of the minimizers for the KWC type energy as stated in Corollary \ref{CSJ}.
By Proposition \ref{PCal}, for the domain length $L=1$, the critical fidelity parameter where a one-jump minimizer and a two-jump minimizer have exactly the same minimal energy is given by $\lambda = 32/(1 \cdot 2 \cdot 3) = 16/3 \approx 5.333$.

Setting $\lambda = 16/3$, we compute the gradient flow of the phase-field KWC model using two different initial conditions corresponding to $m=1$ and $m=2$. 
To clearly illustrate the basin of attraction and the self-organizing nature of the gradient flow, we do not start exactly from the theoretically perfect step functions. Instead, we employ ``blended'' initial states defined by $u(x,0) = 0.5g(x) + 0.5u_m(x)$ (where $u_m$ is the theoretical minimizer defined in \eqref{EStm} for $m=1$ and $m=2$). These blended states are imperfect, possessing smaller jump sizes and residual slopes.

As shown in Figure \ref{Fig:Exp2}, starting from these imperfect blended states, the gradient flows successfully evolve, overcome the residual slopes, and perfectly reconstruct the two different theoretically predicted global minimizers.
This result visually and numerically supports the non-uniqueness caused by the non-convexity of the energy, verifying that multiple distinct global minimizers stably exist for the same critical parameter $\lambda$.

\begin{figure}[tb]
\centering 
\includegraphics[width=\linewidth]{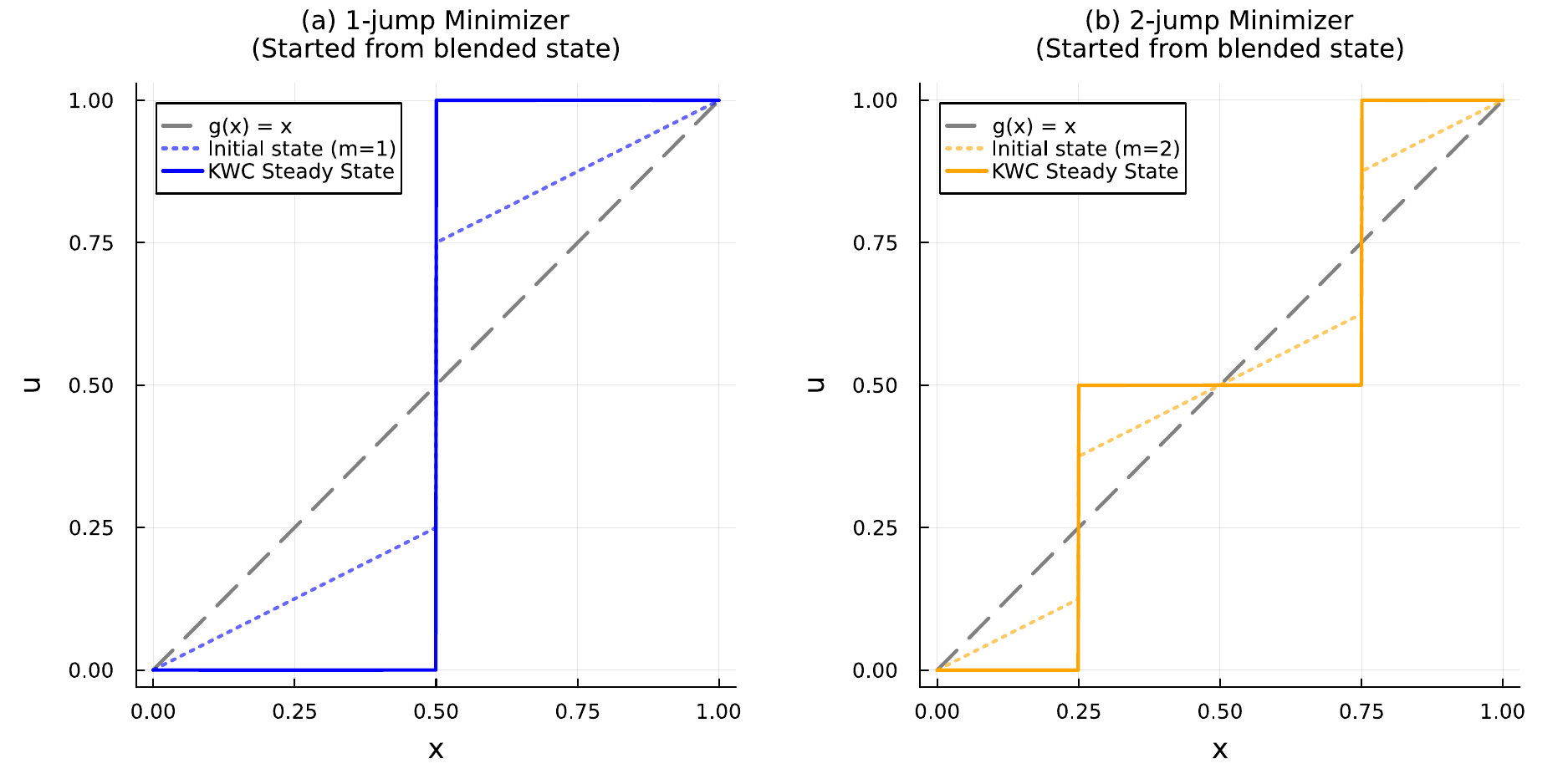}
\caption{Non-uniqueness of minimizers in the phase-field KWC model for $g(x)=x$ at the theoretical critical parameter $\lambda = 16/3$. By starting from blended imperfect states (dashed lines) with smaller jump sizes, the gradient flows successfully reconstruct two different global minimizers (solid lines): a one-jump state (a) and a two-jump state (b).}
\label{Fig:Exp2}
\end{figure}

\subsection{Segmentation of Non-monotone Oscillating Data}
To highlight the practical advantage of the KWC type energy for general signal segmentation tasks, we apply the models to a non-monotone oscillating signal, $g(x) = \sin(3\pi x)$.
For this experiment, we set the fidelity parameter to $\lambda = 150$ while keeping the other parameters unchanged. 
Unlike the theoretical verifications in the previous subsections where the Dirichlet boundary conditions were strictly imposed to match the specific assumptions of Theorem \ref{TSJG}, here we employ the homogeneous Neumann boundary condition for both $u$ and $v$. This is the standard setting in image and signal processing.

Figure \ref{Fig:Exp3} illustrates the comparison of the gradient flows starting from $u(x,0) = g(x)$.
The ROF model inherently suffers from the staircasing effect, generating numerous fine, irregular artificial jumps (false edges) to approximate the steep slopes of the sine wave. 
Interestingly, the AT model (Mumford-Shah) effectively captures the steepest gradients (the zero-crossings) by sharply dropping the phase-field variable $v \to 0$ and creating large jump discontinuities. This is a mathematically correct behavior to avoid massive Dirichlet energy penalties. However, because the Mumford-Shah energy favors piecewise smooth solutions, the regions between the jumps remain curved, failing to produce a piecewise constant clustering.

In striking contrast, the phase-field KWC model powerfully partitions the oscillating data into macroscopic, completely flat blocks. It dynamically cuts off the continuous peaks and valleys, clustering them into distinct flat plateaus separated by sharp phase-field boundaries.
This result visually and strongly demonstrates that the KWC model robustly achieves perfect piecewise constant segmentation even for highly oscillating non-monotone data, which is strictly consistent with the theoretical guarantee provided in Corollary \ref{CMain2}.

\begin{figure}[tb]
\centering 
\includegraphics[width=\linewidth]{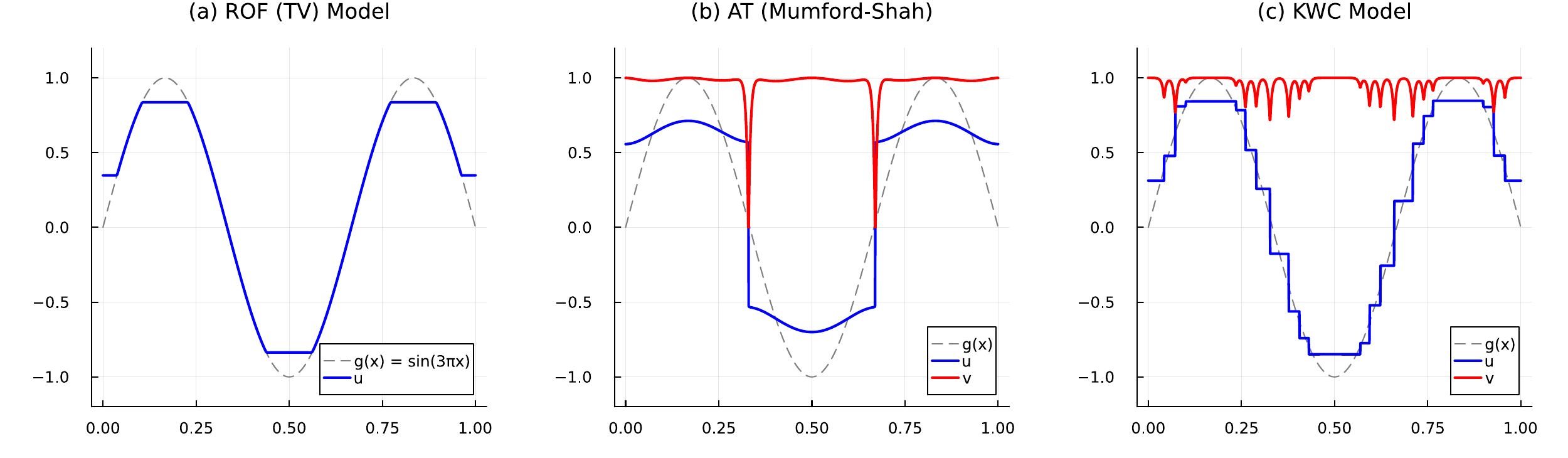}
\caption{Segmentation of non-monotone data $g(x) = \sin(3\pi x)$ under Neumann boundary conditions with $\lambda = 150$. (a) The ROF model exhibits severe staircasing. (b) The AT model creates jumps at the steepest points to save gradient energy but leaves the segments smoothly curved. (c) The KWC model robustly flattens the oscillating wave into perfectly piecewise constant blocks, demonstrating its unique and powerful clustering capability.}
\label{Fig:Exp3}
\end{figure}

\subsection{Robust Denoising of Piecewise Constant Signals}
As a final experiment, we demonstrate the robustness of the KWC model against severe noise in a typical denoising and segmentation task. 
We construct a ground-truth piecewise constant signal $u_{true}(x)$ consisting of three distinct flat plateaus. The observable input data $g(x)$ is then generated by corrupting $u_{true}(x)$ with heavy additive Gaussian noise. It should be emphasized that the gradient flows are computed solely using the noisy data $g(x)$ in the fidelity term, without any prior knowledge of the true signal.
For this experiment, we set the fidelity parameter to $\lambda = 50$ while keeping the other common parameters unchanged.

Figure \ref{Fig:Exp4} compares the recovery results of the three models.
The ROF model gets easily distracted by the local fluctuations of the noise. Because it penalizes the total variation but does not inherently prefer a small number of jumps, it generates spurious microscopic stairs (severe staircasing) to over-fit the noisy data. 
The AT model (Mumford-Shah) attempts to smooth out the noise, but as a consequence, it blurs the sharp edges of the underlying true signal, compromising the exact localization of the jumps.

However, the KWC model exhibits extraordinary robustness. Its non-convex jump energy essentially ignores the microscopic noisy fluctuations, forcing the segments to remain perfectly flat. Consequently, the KWC model flawlessly reconstructs the original piecewise constant ground truth, preserving both the perfectly flat plateaus and the perfectly sharp edges. 
This striking result numerically confirms the significant potential and practical advantage of KWC-type phase-field models for robust image and signal processing applications.

\begin{figure}[tb]
\centering 
\includegraphics[width=\linewidth]{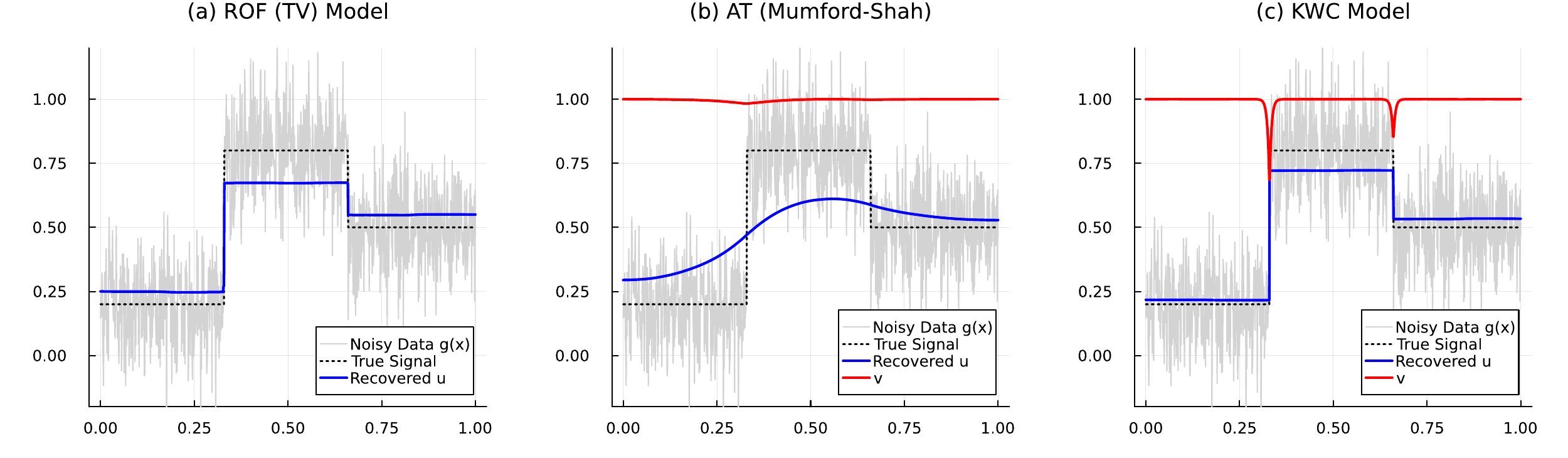}
\caption{Comparison of denoising performance on a noisy step function with $\lambda = 50$. The black dotted line represents the true signal, and the gray solid line is the noisy input data $g(x)$. (a) The ROF model suffers from staircasing due to the noise. (b) The AT model blurs the sharp edges. (c) The KWC model completely suppresses the noise and perfectly reconstructs the flat plateaus and sharp edges.}
\label{Fig:Exp4}
\end{figure}

\section*{Acknowledgements}
The authors are grateful to Professor Jun Okamoto for his valuable comments on Section~\ref{SA}.
 The work of the first author was partly supported by JSPS KAKENHI Grant Numbers JP24K00531 and JP24H00183 and by Arithmer Inc., Daikin Industries, Ltd.\ and Ebara Corporation through collaborative grants.
 The work of the fourth author was partly supported by JSPS KAKENHI Grant Numbers JP22K03425, JP22K18677, JP23H00086, JP25K00918.

\end{document}